\journal{Communications in Nonlinear Science and Numerical Simulation}
\newcommand{\Fix}{\mathrm{Fix}}
\newcommand{\tagname}{touch-and-go\xspace}
\newcommand{\isoname}{symmetry breaking\xspace}
\newcommand{\Isoname}{Symmetry breaking\xspace}
\newcommand{\PM}{\mathcal{P}}
\newcommand{\xfp}{x_{fp}}
\newcommand{\muSJ}{\mu_{SJ}}
\newcommand{\muEM}{\mu_{EM}}
\newtheorem{theorem}{Theorem}
\newtheorem{lemma}[theorem]{Lemma}
\newtheorem{remark}[theorem]{Remark}
\newtheorem{definition}[theorem]{Definition}
\newenvironment{proof}{\textbf{Proof:}}{\qed}
\begin{document}

\begin{frontmatter}
\title{Validated numerics for period-tupling and \tagname bifurcations of symmetric periodic orbits in reversible systems}

\author{Irmina Walawska\corref{author1}}
\ead{Irmina.Walawska@ii.uj.edu.pl}

\author{Daniel Wilczak\corref{author2}}
\ead{Daniel.Wilczak@ii.uj.edu.pl}

\address{Faculty of Mathematics and Computer Science,
Jagiellonian University, \L ojasiewicza 6, 30-348 Krak\'ow, Poland.}
\cortext[author1] {The work of the first author was supported by the Polish National Science Center under grant UMO-2016/23/N/ST6/02006.}
\cortext[author2] {Corresponding author. This research is partially supported by 
the Polish National Science Center under Maestro Grant No. 2014/14/A/ST1/00453 and under Grant No. 2015/19/B/ST1/01454.}
\date{\today}
\begin{abstract}
We propose a general framework for computer-assisted verification of the presence of \isoname, period-tupling and \tagname bifurcations of symmetric periodic orbits for reversible maps. The framework is then adopted to Poincar\'e maps in reversible autonomous Hamiltonian systems.

In order to justify the applicability of the method, we study bifurcations of halo orbits in the Circular Restricted Three Body Problem. We give a~computer-assisted proof of the existence of wide branches of halo orbits bifurcating from $L_{1,2,3}$-Lyapunov families and for wide range of mass parameter. For two physically relevant mass parameters we prove, that halo orbits undergo multiple period doubling, quadrupling and third-order \tagname bifurcations.
\end{abstract}

\begin{keyword}
bifurcations of periodic orbits\sep
reversible systems\sep
validated numerics\sep 
halo orbits.


\MSC[2010]{65P30}\sep 
\MSC[2010]{65G20}\sep 
\MSC[2010]{37M20}\sep 
\MSC[2010]{37C27} 
\end{keyword}
\end{frontmatter}

\section{Introduction}
In the past 40 years there were proposed very efficient methods for numerical continuation of  periodic orbits for general ODEs, or periodic orbits satisfying certain symmetries with special focus on applications to Hamiltonian systems \cite{DRPKDGV,MFGDV,H}. They are implemented in very efficient packages, such as AUTO \cite{D}, MATCONT \cite{DGK} and CONTENT \cite{KL}.

Most of the methods are similar in the spirit. The family of periodic orbits satisfies certain finite-dimensional implicit equations, which is solved by a Newton-like scheme. The equations usually involve period (return  time) of the orbit, space variables and/or value of the Hamiltonian. The Newton-like iteration requires computation of monodromy matrix, which is not an issue nowadays, where many efficient tools for integration of ODEs along with variational equations are available \cite{ABBR,JZ,CAPD}.

Bifurcations of periodic orbits can be detected by looking at changes of determinant of monodromy matrix \cite{WS}, inspecting so-called stability parameter in low dimensional systems \cite{GM,H}, analysis of normal form after Lyapunov--Schmidt reduction \cite{CV} or solving an equation specific for the type of bifurcation \cite{NS}. Literature on the topic is really wide, and the list of methods and references mentioned above is clearly incomplete. 

The present paper is complementary to the above results and methods. We focus on reversible maps and reversible Hamiltonian systems. The primary result of the paper is a general framework for rigorous computer-assisted verification, that a branch of symmetric periodic orbits undergoes period-tupling and/or \tagname bifurcation. Finding an approximate numerical candidate for bifurcation point is just a preliminary step of the validation algorithm and for this purpose we can use any of the methods mentioned above \cite{CV,H,GM,NS,WS}. Then, checking several inequalities on the (Poincar\'e) map under consideration and its derivatives on an explicit neighbourhood (input to the algorithm) of the approximate bifurcation point we can prove, that it contains a~bifurcation point and we can conclude about type of bifurcation. As an output of the algorithm we obtain guaranteed bounds on both the bifurcation point and the parameter of the system, at which bifurcation occurs.

In order to justify applicability of the framework we apply it to the Circular Restricted Three Body Problem (CR3BP) \cite{S}. We give a computer-assisted proof of the existence of wide branches of so-called halo orbits bifurcating from the families of planar Lyapunov orbits around $L_{1,2,3}$ libration points. We also prove, that for some physically relevant mass parameters of the system, these branches undergo period-tupling and \tagname bifurcations. These rigorous results are justification of some numerical observations from previous articles, in particular \cite{GM,DRPKDGV}. We also would like to emphasize, that we have found a new phenomenon regarding bifurcations of halo orbits near $L_3$ libration point, when the mass parameter of the system tends to zero. We have observed, that the energy at the \isoname bifurcation, which creates halo orbit, as a function of the mass parameter is not monotone, when the relative mass tends to zero --- see Remark~\ref{rem:L3monotonocity}. Although not validated, this observation has been made using nonrigorous numerics with very high accuracy. 

Rigorous numerical investigation of periodic orbits to ODEs became quite standard  \cite{Ar,BW,C,K,KZ,Pi,SK,W, WZ1,WZ2}. To the best of our knowledge, there are very few results regarding computer-assisted verification of bifurcations of periodic orbits of ODEs, and the field remains widely open. Validation, that a family of  periodic orbits undergoes a bifurcation usually involves computation of rigorous bounds on higher order derivatives of the trajectories with respect to initial condition (except some special cases when the system admits additional structure). The algorithm capable to do that appeared just in 2011 \cite{WZ4} (implemented as a part of publicly available CAPD library \cite{CAPD}) and to the best of our knowledge there are no other publicly available algorithms for rigorous integration of higher order variational equations. The $\mathcal C^r$-Lohner algorithm from \cite{WZ4} has been already applied to study period-dubling bifurcations of periodic orbits \cite{WZ3} in the R\"ossler system \cite{R}, homoclinic tangencies of periodic orbits in a time-periodic forced-damped pendulum equation \cite{WZ5} and non-local cocoon bifurcations \cite{KWZ} in the Michelson system \cite{Mich}. Very recently \cite{BLJ}, another approach to compute periodic orbits for ODEs without integration of the system has been proposed. Periodic solutions are approximated via piecewise Chebyshev polynomials and then their existence is validated by analysis of certain nonlinear operator on a Banach space of Chebyshev coefficients. The efficiency of the method is illustrated on the example of Equilateral Circular Restricted Four Body Problem. Similar approach has been successfully applied for validated computation of bifurcations of equilibria for ODEs and steady states for PDEs --- see for example \cite{BLV,L16,LSW}. A different, geometric method for validation of bifurcations of steady states in the Kuramoto-Sivashinsky PDE is also proposed in \cite{Z}.

Our algorithm for proving the existence of period-tupling and \tagname bifurcation is similar in the spirit to that proposed in \cite{WZ3,Z}, but exploits the presence of a reversing symmetry of the system. After fixing an appropriate coordinate system in a neighbourhood of an apparent bifurcation points, we perform validated Lyapunov-Schmidt reduction. In this way, the analysis of bifurcation is transformed to zero-finding problem of a bivariate scalar-valued function, called \emph{bifurcation function}. Then, checking some inequalities on derivatives of the bifurcation function we can prove, that its set of zeroes is the union of two smooth curves which intersect at an unique point --- the bifurcation point. Finally, some non-degeneracy conditions (inequalities on higher order derivatives of the bifurcation function) let us to conclude about the type of bifurcation. In this paper we restrict to \isoname, period-tupling and \tagname bifurcations.

The article is organized as follows. In Section~\ref{sec:notation} we introduce notation and main definitions used in the paper. Theoretical results, which constitute a basis of the computational framework for general reversible (Poincar\'e) maps are presented in Section~\ref{sec:theory} and then adopted to autonomous reversible Hamiltonian systems in Section~\ref{sec:hamiltonian}. Finally, in Section~\ref{sec:cr3bp} we give formal statements of theorems regarding continuation and bifurcations of halo orbits in the CR3BP.  

In the Appendix we present two auxiliary yet new results, which are included mainly for self-consistency of the paper. Frequently we have to show, that a solution to an implicit equation is defined over an explicit domain. This step appears for example in continuation of periodic orbits or in validated Lyaunov-Schmidt reduction. Although there are available methods for this purpose (see for example \cite{BLM,GLM,WZ4,BW}), in~\ref{sec:continuation} we provide an improvement, which takes advantage from higher order derivatives and flattening the implicit function by a smooth, well chosen substitution. In~\ref{sec:findingBifurcationPoints} we propose own and short algorithm for finding an approximate bifurcation point. It takes advantage from the Lyapunov--Schmidt reduction and computation of higher order derivatives of the (Poincar\'e) map under consideration. Using it, we could easily localize approximate bifurcation points in the CR3BP with the accuracy $10^{-60}$. 

\section{Preliminaries}\label{sec:notation}
\subsection{Notation and basic definitions}
For a map $f\colon D\subset X\to X$, a predicate $\mathcal C\colon X\to \{\texttt{true},\,\texttt{false}\}$ and a set $U\subset D$ we introduce the following notation
\begin{equation*}
 \Fix(f,U,\mathcal C) =\{ x\in U : \left(f(x)=x\right)\wedge \mathcal C(x)\}.
\end{equation*}
Thus, $\Fix(f,U,\mathcal C)$ is the set of fixed points of $f$ in $U$ satisfying constraint $\mathcal C$. Although there is a natural correspondence between subsets of $X$ and univariate predicates defined on $X$, we will separate $U$ and $\mathcal C$ to emphasize rather rare property $\mathcal C$ in a larger (usually open) set $U$. We will also write $\Fix(f,U)$ if $\mathcal C(x)\equiv \texttt{true}$. 
For a set $M\subset \mathbb R\times X$ and $\nu\in\mathbb R$, we define its slice by $M_\nu=\{x\in X : (\nu,x)\in M\}$. For a map $f\colon M\subset \mathbb R\times X\to X$ and fixed $\nu\in\mathbb R$ we define $f_\nu\colon M_\nu\to X$ by $f_\nu(x)=f(\nu,x)$.

\begin{definition}
 A homeomorphism $R \colon X\to X$ is called a reversing symmetry for $f\colon M\subset X\to X$ if $(R\circ f)(M)\subset M$ and for $x\in M$ there holds 
 \begin{equation*}
  (R^{-1}\circ f\circ R\circ f)(x) = x.  
 \end{equation*}
\end{definition}
It is easy to see that any reversing symmetry $R$ for $f$ is also a reversing symmetry for all iterations $f^n$ defined on their proper domains.  

\begin{definition}
 A homeomorphism $S \colon X\to X$ is called a symmetry for $f\colon M\subset X\to X$ if $S(M)\subset M$ and for $x\in M$ there holds
 \begin{equation*}
  S(f(x)) = f(S(x)).
 \end{equation*} 
\end{definition}

For a map $S\colon X\to X$ we define a predicate $\mathcal C_S\colon X\to\{\texttt{true},\,\texttt{false}\}$ by
\begin{equation}\label{eq:predicateSymmetry}
\mathcal C_S(x) = \left(x\in\Fix(S,X)\right).
\end{equation}
We will use this notation to select points satisfying certain symmetries. 

\subsection{Geometric definitions of two types of bifurcations}
Our primary object of interest is a $\mathcal C^3$-smooth function $f\colon M\to X$ defined on an open set $M\subset \mathbb R\times X$, where $X$ is a smooth manifold. In this article we make the following standing assumption:
\begin{itemize}
 \item[\textbf{C1:}] for $\nu\in\mathbb R$, if $M_\nu\neq \emptyset$ then the function $f_\nu\colon M_\nu\to X$ is a diffeomorphism onto image.
\end{itemize}
The above assumption fits the applications we have in mind, i.e. $f_\nu$ will be a one-parameter family of Poincar\'e maps. Thus, the domain of each map $f_\nu$ may vary with the parameter $\nu$. 

Bifurcations are usually defined by their normal forms \cite{CH}. The following 
\begin{eqnarray}
 f^k_\nu(x) &=& x(\nu-x^2),\label{eq:normalFormPT}\\
 f^k_\nu(x) &=& x(\nu-x)\label{eq:normalFormTAG}
\end{eqnarray}
describe period-tupling and transcritical bifurcations, respectively. When an eigenvalue of the derivative of a reversible planar map at a symmetric fixed point crosses $1:k$ resonance, $k\geq 2$, generically period-tupling or \tagname bifurcation occurs (notation following \cite{BBS}) --- two types of generic bifurcations for strong resonances are illustrated on Figure~\ref{fig:resonances}. The branches of symmetric periodic points near bifurcation point look similarly to those for pitchfork and transcritical bifurcations, respectively. 
\begin{figure}
\centerline{\includegraphics[width=.9\textwidth]{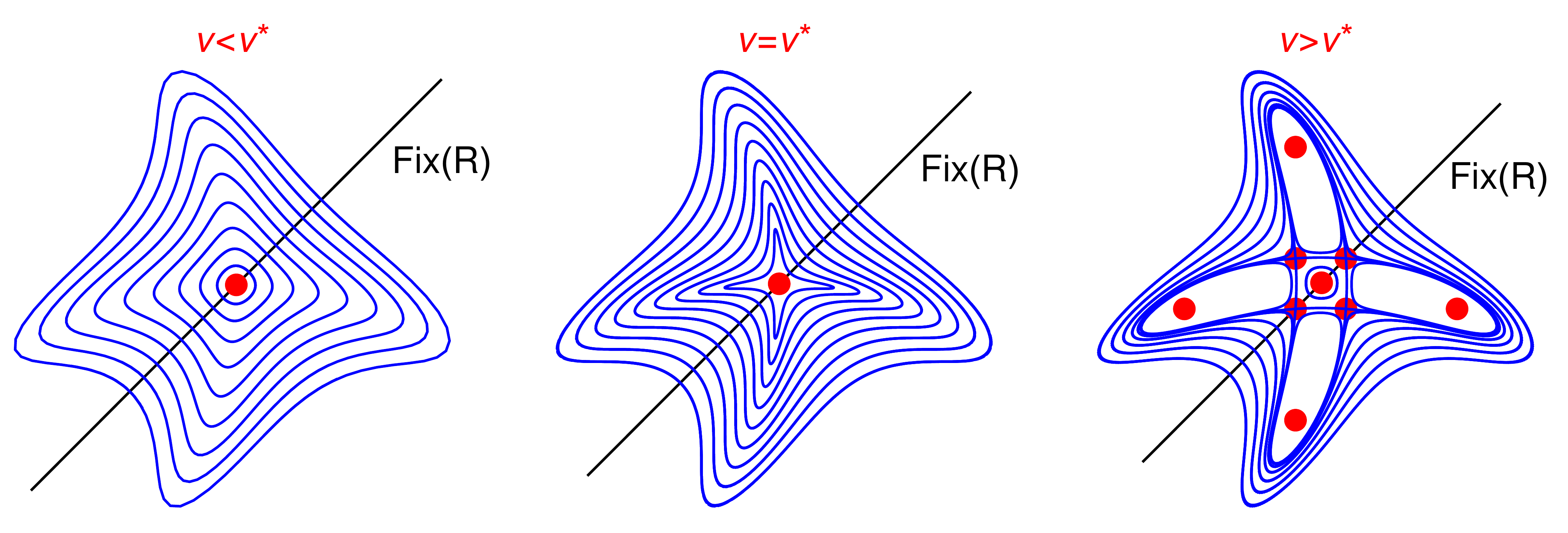}}
\centerline{\includegraphics[width=.9\textwidth]{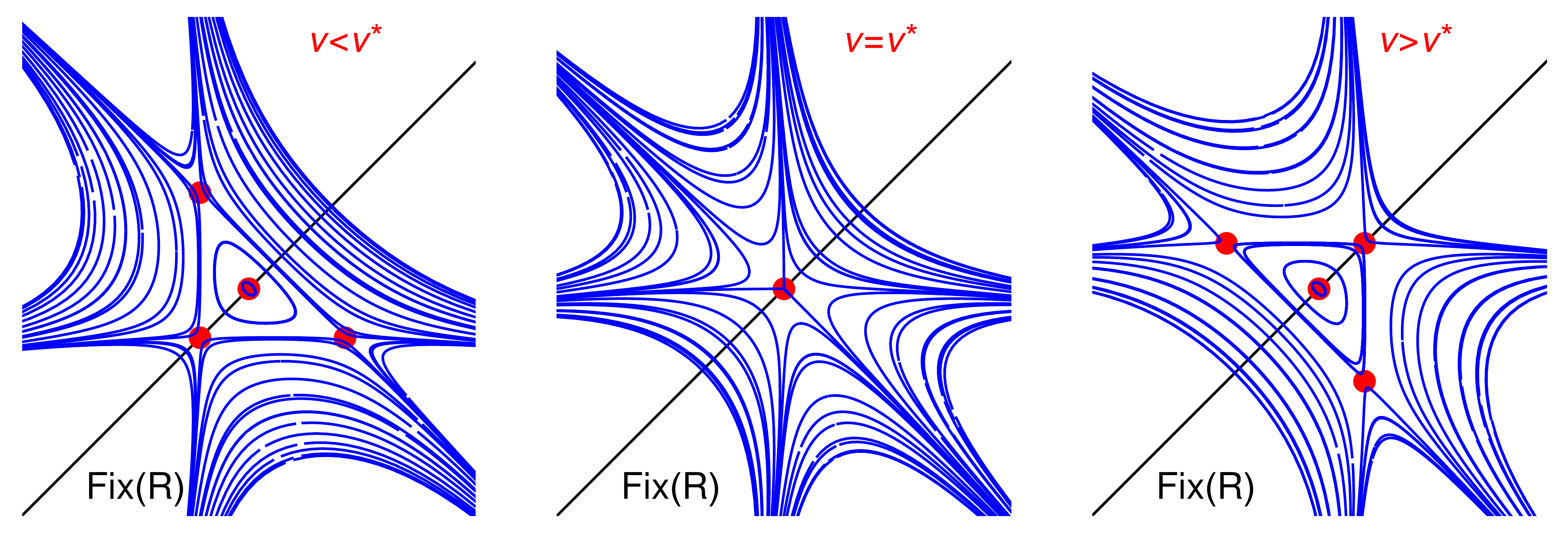}}
\caption{Upper row: period quadrupling bifurcation of a reversible planar map. After bifurcation a period-$4$ orbit is created, which intersects $\Fix(R)$ at exactly two points. Lower row: third-order \tagname bifurcation of a reversible map.}
\label{fig:resonances}
\end{figure}
This observation lead us to the alternative definitions, in which these two types of bifurcations are described by some geometric conditions on the mutual position of two intersecting curves, that solve equation $f^k_{\nu}(x)-x=0$ --- see Figure~\ref{fig:period-tupling-and-tag}. These definitions are motivated by our algorithm for validation of bifurcations (Section~\ref{sec:theory}), which is geometric in its spirit. Then, in Section~\ref{sec:unfolding} we will show, that these geometric definitions along with some non-degeneracy conditions imply standard unfolding (\ref{eq:normalFormPT})--(\ref{eq:normalFormTAG}) of these bifurcations.
\begin{figure}
\centerline{
  \includegraphics[width=.45\textwidth]{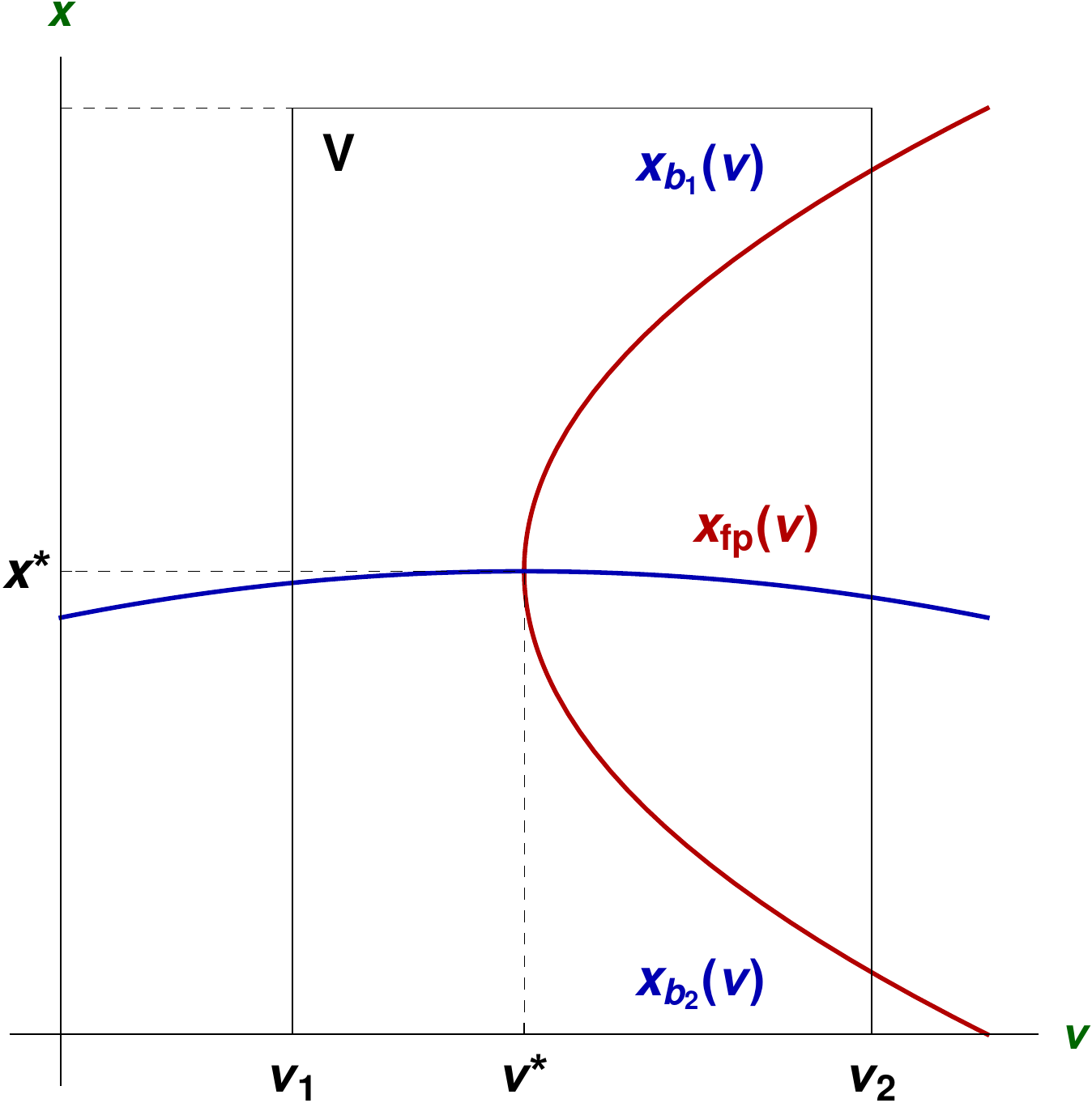}\hskip.1\textwidth
  \includegraphics[width=.45\textwidth]{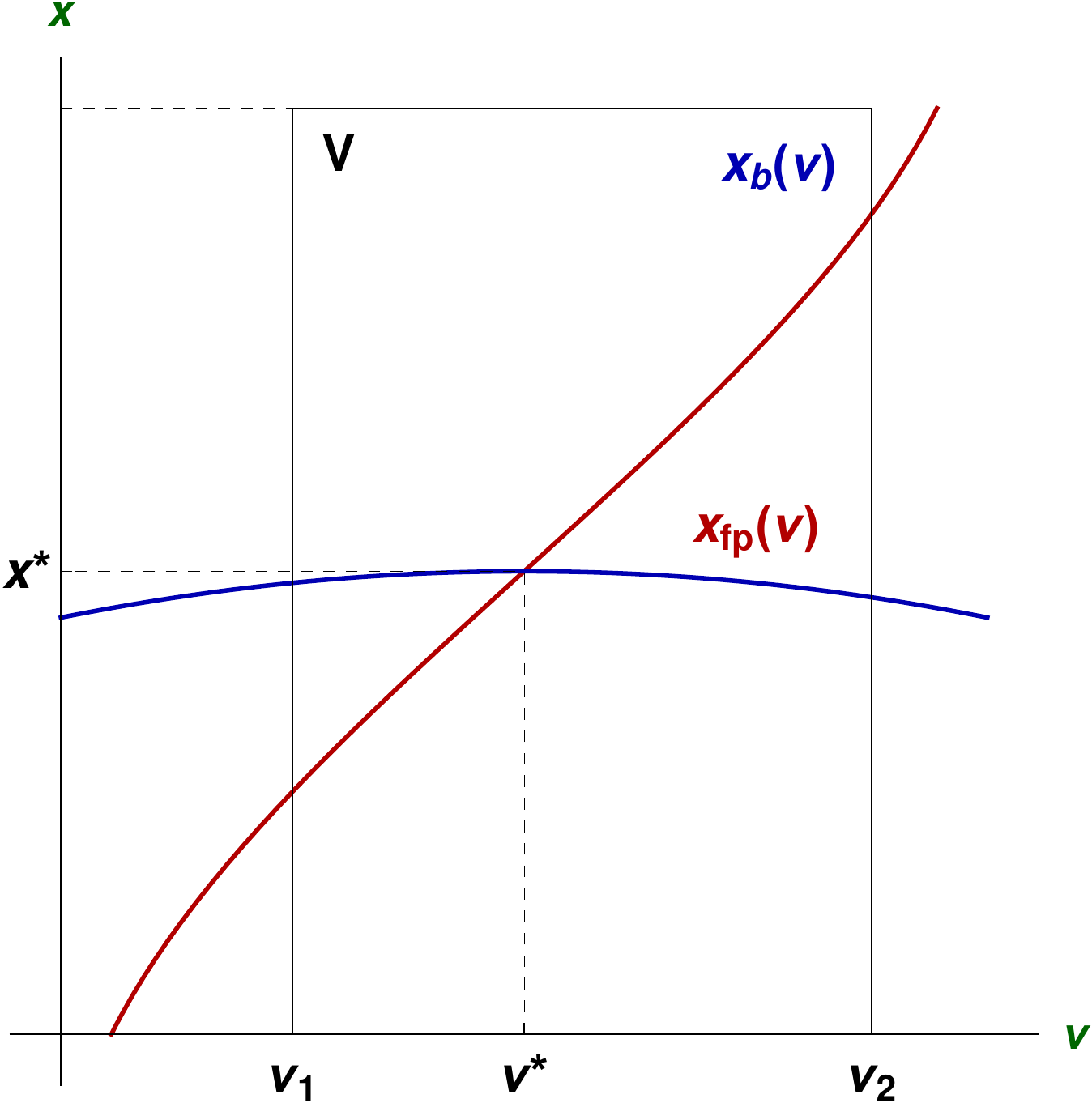}
  }
\caption{Geometry of (left) period-tupling and (right) \tagname bifurcations.}
\label{fig:period-tupling-and-tag}
\end{figure}
\begin{definition}\label{def:tupDefinition}
Let $\mathcal{C}\colon X\to \{\emph{\texttt{true},\,\texttt{false}}\}$ be a predicate and let $k$ be a positive integer. We say that $f_{\nu}\colon M_\nu\to X$ has period \emph{$k$--tupling bifurcation} at $(\nu^*,x^*)\in M$, if there exists $V=[\nu_1, \nu_2] \times U \subset M$, such that $(\nu^*,x^*) \in \mathrm{int}\,V$ and there are continuous and smooth in the interior of their domains functions
 \begin{equation*}
 \xfp \colon [\nu_1, \nu_2] \longrightarrow \mathrm{int}\,U, \qquad x_{b_1},x_{b_2} \colon [\nu^*, \nu_2] \longrightarrow \mathrm{int}\,U,
 \end{equation*} 
such that $x_{b_1}(\nu^*)=x_{b_2}(\nu^*)=\xfp(\nu^*)=x^*$ and the following conditions are satisfied.
\begin{enumerate}
\item Periodic points: \begin{equation*}
 \begin{array}{rcll}
  
  \Fix(f^p_{\nu}, U,\mathcal{C}) &=& \{ \xfp(\nu) \},&  \nu\in[\nu_1,\nu^*],\ 1\leq p\leq k, \\
  \Fix(f^p_{\nu},U,\mathcal{C})  &=& \{ \xfp(\nu)\} ,&  \nu\in[\nu^*,\nu_2],\ 1\leq p<k,\\
  \Fix(f^k_{\nu},U,\mathcal{C})  &=& \{ \xfp(\nu), x_{b_1}(\nu),x_{b_2}(\nu) \} ,& \nu\in[\nu^*,\nu_2],\\
  \#\Fix(f^k_{\nu},U,\mathcal{C}) &=&3,  &\nu \in(\nu^*,\nu_2].
  \end{array}
\end{equation*}
\item If $k$ is even, then for $\nu\in(\nu^*,\nu_2]$ there holds
\begin{equation*}
 f_{\nu}^{k/2}(x_{b_1}(\nu)) = x_{b_2}(\nu), \qquad f_{\nu}^{k/2}(x_{b_2}(\nu)) = x_{b_1}(\nu).
\end{equation*}
\end{enumerate}
\end{definition}
\begin{definition}\label{def:touchDefinition}
Let $\mathcal{C}\colon X\to \{\emph{\texttt{true},\,\texttt{false}}\}$ be a predicate and let $k$ be a positive integer. We say that $f_{\nu}\colon M_\nu\to X$ has \emph{$k^\mathrm{th}$--order \tagname bifurcation} at $(\nu^*,x^*)\in M$, if there exists $V=[\nu_1, \nu_2] \times U \subset M$, such that $(\nu^*,x^*) \in \mathrm{int}\,V$ and there are continuous and smooth in the interior of their domains functions
 \begin{equation*}
 \xfp, x_b \colon [\nu_1, \nu_2] \longrightarrow \mathrm{int}\,U,
 \end{equation*} 
satisfying the following conditions.
\begin{enumerate}
 \item Periodic points: 
 \begin{equation*}
 \begin{array}{rcll}
  \Fix(f^p_{\nu},U,\mathcal{C}) &=& \{ \xfp(\nu)\} ,&  \nu\in[\nu_1,\nu_2],\ 1\leq p<k,\\
  \Fix(f^k_{\nu},U,\mathcal{C}) &=& \{ \xfp(\nu), x_{b}(\nu) \} ,& \nu\in[\nu_1,\nu_2].
  \end{array}
\end{equation*}
\item The curves $\xfp$ and $x_b$ intersect at exactly one point $x^*=\xfp(\nu^*) = x_b(\nu^*)$. 
\end{enumerate}
\end{definition}
The above two definitions are slightly more general than the normal forms (\ref{eq:normalFormPT})--(\ref{eq:normalFormTAG}). Indeed, it is easy to see that the functions 
\begin{eqnarray*}
 f^k_\nu(x) &=& x(\nu-x^2)(\nu^{2n}+x^{2m}),\\
 f^k_\nu(x) &=& x(\nu-x)(\nu^{2n}+x^{2m})
\end{eqnarray*}
also satisfy geometric conditions from Definition~\ref{def:tupDefinition} and Definition~\ref{def:touchDefinition} for any $n,m\in\mathbb N$.

\section{Validation of bifurcations of symmetric periodic orbits: reversible maps}
\label{sec:theory}
Let $f\colon M\to X$ be a family of $R$--reversible maps satisfying our standing assumption \textbf{C1}.
In this section we propose a general framework for computer-assisted verification that a branch of $R$-symmetric period-$2$ points for $f_\nu$ undergoes \isoname, period-tupling or \tagname bifurcation. Let $V\subset M$ be an explicit neighbourhood of an approximate bifurcation point (both are input to the algorithm). The algorithm is split into two steps.

In the first step we validate, that the set of points $(\nu,x)\in V$, such that $x\in \Fix(R)$ is either period-$2k$ point or period-$2$ point for $f_\nu$ is a union of two regular curves intersecting at exactly one point. This is a common step for period-tupling and \tagname bifurcations and it will be described in Section~\ref{sec:intersection}. In the case of \isoname bifurcation, we need an additional constraint, which allows to isolate the primary curve of period-$2$ points and the branching-off curve of period-$2$ points. In Section~\ref{sec:isochronous} we will show, that such a problem can be solved, if the system has an additional symmetry, which commutes with $R$.

The second step is specific for each type of bifurcation. We provide checkable by means of rigorous numerics conditions, which guarantee, that the two curves intersect in a manner given by Definition~\ref{def:tupDefinition} and Definition~\ref{def:touchDefinition}. 

Finally, in Section~\ref{sec:unfolding} we show that some additional non-degeneracy conditions lead to standard unfolding of the computed bifurcations to their normal forms (\ref{eq:normalFormPT}) and (\ref{eq:normalFormTAG}).

\subsection{Bifurcation as two intersecting curves}\label{sec:intersection}
We are mostly interested in studying bifurcations of symmetric periodic orbits of $R$--reversible ODEs. In continuous-time $R$--reversible dynamical systems, a trajectory is $R$--symmetric, if it is either an equilibrium belonging to $\Fix(R)$ or it intersects twice the set $\Fix(R)$ --- see \cite{L}. In order to obtain isolated symmetric periodic orbits we assume, that $\mathrm{dim}\, X=2n$ and $\Fix(R)$ is an $n$-dimensional submanifold.

Let us fix $k\geq1$ and assume that $(\hat\nu, \hat x)$ is a good numerical approximation of a bifurcation point, i.e. one of the eigenvalues of $Df_{\hat\nu}(\hat x)$ is close to $1:k$ resonance. Since all types of bifurcations we are studying are local phenomena, we assume, that there is an open interval $\mathcal J$ and an open set $U\subset X$, such that $(\hat \nu, \hat x)\in\mathcal J\times U$ and for $\nu\in\mathcal J$ the function $f_\nu^{2k}$ is defined on $U$. We also assume that there are local coordinates in $U$ in which 
\begin{equation*}
\Fix(R,U)=\left\{(p,q) \in U : p,q\in\mathbb R^n,\ q=0\right\}.
\end{equation*}
We will use the same symbols $(p,q)$ as a coordinate system near $f_{\hat \nu}(\hat x)$. Solutions to 
\begin{equation}\label{eq:ImplicitEquation} 
  \pi_{q} (f_\nu^k(p,0)) =0
\end{equation}
are $R$--symmetric periodic points of $f_\nu$ with principal period not larger than $2k$. The solution set to (\ref{eq:ImplicitEquation}) near $(\hat \nu, \hat x)$ is expected to be a union of two curves, one of which corresponds to the fixed points of $f_\nu^2$ and the second corresponding to period-$2k$ points of $f_{\nu}$.

\begin{remark}
 There are rigorous numerical methods for validation, that a solution set to an implicit equation forms a regular curve over an explicit domain --- see for example \cite{BW,BLM,WZ3}. In this paper we propose slightly improved version of the Interval Newton Operator \cite{A,M,N}. Since it is only an auxiliary step of the main algorithm for validation of bifurcations, we postpone this improvement to \ref{sec:continuation}. 
\end{remark}
The case $k=1$ (\isoname bifurcation) will require additional constraint and will be discussed in Section~\ref{sec:isochronous}. If $k>1$, we may apply the method described in \ref{sec:continuation} to solve (\ref{eq:ImplicitEquation}) on an explicit range of parameter values $\nu\in\mathcal J$. In what follows we assume, that there is a smooth curve  
\begin{equation}\label{eq:fpparam}
\mathcal{J}\ni \nu \to \xfp(\nu) = (p_1(\nu),p_2(\nu),0)\in \mathbb R\times\mathbb R^{n-1}\times\mathbb R^n,
\end{equation}
such that for $\nu\in\mathcal J$ and $i=1,\ldots,k-1$
\begin{equation}\label{eq:fpEquation}
\Fix(f_\nu^{2i},U,\mathcal C_R) = \{\xfp(\nu)\}. 
\end{equation}
In most cases, the function $\xfp$ cannot be computed exactly, but using rigorous numerics we can prove, that it exists and we can find bounds for $\xfp(\nu)$ and its derivatives. Although it is not required, it is desirable for further numerical computation to choose the coordinate system in $\Fix(R,U)$ and the decomposition $p=(p_1,p_2)$ so that $p_1'(\hat \nu)\approx 0$. 

The idea of computing the second curve of $R$-symmetric, period-$2k$ points, which solves (\ref{eq:ImplicitEquation}) goes as in \cite{WZ3}. At first, we perform the Lyapunov-Schmidt reduction \cite{CH}. We split $q=(q_1, q_2) \in \mathbb R\times\mathbb R^{n-1} $ and using the method from~\ref{sec:continuation} we solve for a function $p_2 = p_2(\nu, p_1)$ satisfying implicit equation
\begin{equation}\label{eq:LSreduction} 
\pi_{q_2} (f_\nu^k(p_1,p_2,0)) = 0.
\end{equation}
Assuming that $p_2 = p_2(\nu, p_1)$ is locally unique solution to (\ref{eq:LSreduction}) in $\mathcal J\times U$, we can define so-called \emph{bifurcation function} 
\begin{equation}\label{eq:Gk}
G_k(\nu, p_1) =  \pi_{q_1} (f_\nu^k(p_1,p_2(\nu, p_1), 0)).
\end{equation}
In this way we reduced the problem of finding zeros of (\ref{eq:ImplicitEquation}) to a problem of finding zeros of a bivariate scalar-valued function (\ref{eq:Gk}). By the construction, the function $G_k$ vanishes at $(\nu, p_1(\nu))$, for $\nu \in \mathcal{J}$ --- see (\ref{eq:fpparam}). Therefore, we can factorize it in the following way  

\begin{equation}\label{eq:GkFactorization}
G_k(\nu, p_1) = ( p_1 -p_1(\nu)) g_k(\nu , p_1),
\end{equation}
where 
\begin{equation}\label{eq:gk}
g_k(\nu, p_1) = \int_0^1 \frac{\partial G_k}{\partial p_1}(\nu, t(p_1 - p_1(\nu ))+ p_1(\nu))dt.
\end{equation}
The function $g_k$ is called the \emph{reduced bifurcation function}. Recall, that in most cases the function $\nu\to p_1(\nu)$ is unknown and we have only a rigorous bound on it and its derivatives. Similarly as $p_1$, the function $g_k$ cannot be computed exactly. However, it possible to bound values and partial derivatives of $g_k$ using integral representation (\ref{eq:gk}). 

In the second step, we apply the method from \ref{sec:continuation} to prove, that the set of zeroes of $g_k$ can be parametrized as a smooth curve $p_1\to \nu(p_1)$. To sum up, in addition to \textbf{C1}, we make the following standing assumptions, which are all checkable be means of rigorous numerical methods:
\begin{itemize}
 \item[\textbf{C2:}] $P_1$ is a closed interval and $P_2$ is closed set, such that $p_1(\nu) \in \mathrm{int}\,P_1$ for $\nu \in \mathcal{J}$ and the solution set to (\ref{eq:LSreduction}) in $\mathcal J\times P_1\times P_2$ is a graph of smooth function $p_2\colon\mathcal{J}\times P_1\to \mathrm{int}\,P_2$;
 \item[\textbf{C3:}] there exists a smooth function $P_1 \ni p_1 \to \nu(p_1) \in \mathcal{J}$, such that  
$$ \left \{(\nu, p_1)\in \mathcal{J} \times P_1 :  g_k(\nu, p_1) = 0  \right \}\ = \ \left \{ (\nu(p_1), p_1 )  :  p_1 \in P_1 \right \}, $$
where $g_k$ is defined by (\ref{eq:gk});
\item [\textbf{C4:}] there holds
\begin{equation*}
0\notin \frac{\partial^2 G_k}{\partial \nu\partial p_1}(\mathcal J\times P_1) + \frac{\partial^2 G_k}{\partial p_1^2}(\mathcal J\times P_1)p_1'(\mathcal J).
\end{equation*}
\end{itemize}
We will finish this section by showing, that assumptions \textbf{C1--C3} imply, that the two curves, which solve (\ref{eq:ImplicitEquation}) intersect at some point, while \textbf{C4} implies that this intersection is unique.
\begin{lemma}\label{lem:intersection} Let $s:A\to B$ and $t:B\to A$ be continuous, where $A$ is an open interval, $B$ is a~closed interval and $s(A)\subset \mathrm{int}\,B$. Then there exists $(a^*,b^*)\in \mathrm{int}\left(A\times B\right)$, such that $(a^*,s(a^*)) = (t(b^*),b^*)$.
\end{lemma}
\begin{proof}
Since $s(A)\subset \mathrm{int}B$, the set $\left(A\times B\right)\setminus \{(a,s(a)) : a\in A\}$ has two connected components containing the points $(t(\min{B}),\min{B})$ and $(t(\max{B}),\max{B})$, respectively. Since $t$ is continuous, there is $b^*\in \mathrm{int}\,B$ such that $(t(b^*),b^*)=(a^*,s(a^*))$ for some $a^*\in A$. 
\end{proof}

\begin{lemma}\label{lem:uniqueIntersection}
Under assumptions \textbf{C1--C4}, there is a unique point $(\nu^*,p_1^*)\in\mathcal J\times P_1$, such that $\nu^*=\nu(p_1^*)$ and $p_1^*=p_1(\nu^*)$.
\end{lemma}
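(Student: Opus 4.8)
The plan is to separate the statement into an existence and a uniqueness part. Existence follows at once from Lemma~\ref{lem:intersection}: I would set $A=\mathcal J$, $B=P_1$, $s=p_1$ and $t=\nu$. Assumption \textbf{C2} guarantees that $P_1$ is a closed interval and that $s(A)=p_1(\mathcal J)\subset\mathrm{int}\,P_1$, while \textbf{C3} supplies the continuous map $t=\nu\colon P_1\to\mathcal J$. The lemma then produces a point $(\nu^*,p_1^*)\in\mathrm{int}(\mathcal J\times P_1)$ with $(\nu^*,p_1(\nu^*))=(\nu(p_1^*),p_1^*)$, i.e. $\nu^*=\nu(p_1^*)$ and $p_1^*=p_1(\nu^*)$, as required.

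For uniqueness the idea is to count intersection points by restricting the reduced bifurcation function to the fixed-point curve. I would introduce
\[
\psi(\nu)=g_k(\nu,p_1(\nu)),\qquad \nu\in\mathcal J.
\]
Since the first curve is the graph $p_1=p_1(\nu)$ and, by \textbf{C3}, the second curve is exactly the zero set of $g_k$ in $\mathcal J\times P_1$, a point of the first curve $(\nu,p_1(\nu))$ lies on the second curve if and only if $\psi(\nu)=0$. The map $\nu\mapsto(\nu,p_1(\nu))$ is injective, so intersection points are in bijection with zeros of $\psi$, and the point found above gives $\psi(\nu^*)=g_k(\nu^*,p_1^*)=0$ because $\nu^*=\nu(p_1^*)$. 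It therefore suffices to prove that $\psi$ has at most one zero, for which I would show that $\psi$ is strictly monotone on $\mathcal J$.

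The part I expect to require the most care is identifying $\psi'$ with the quantity controlled by \textbf{C4}. Differentiating the factorization (\ref{eq:GkFactorization}), $G_k=(p_1-p_1(\nu))g_k$, twice and evaluating along $p_1=p_1(\nu)$ — where the prefactor $p_1-p_1(\nu)$ vanishes and kills the terms carrying second derivatives of $g_k$ — gives
\[
\frac{\partial^2 G_k}{\partial p_1^2}\Big|_{p_1=p_1(\nu)}=2\,\frac{\partial g_k}{\partial p_1},\qquad \frac{\partial^2 G_k}{\partial\nu\,\partial p_1}\Big|_{p_1=p_1(\nu)}=\frac{\partial g_k}{\partial\nu}-p_1'(\nu)\,\frac{\partial g_k}{\partial p_1},
\]
all derivatives of $g_k$ evaluated at $(\nu,p_1(\nu))$. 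Adding $p_1'(\nu)$ times the first identity to the second, the cross terms cancel and, since $\psi'(\nu)=\tfrac{\partial g_k}{\partial\nu}+p_1'(\nu)\tfrac{\partial g_k}{\partial p_1}$ at $(\nu,p_1(\nu))$, one obtains
\[
\psi'(\nu)=\frac{\partial^2 G_k}{\partial\nu\,\partial p_1}(\nu,p_1(\nu))+\frac{\partial^2 G_k}{\partial p_1^2}(\nu,p_1(\nu))\,p_1'(\nu).
\]

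Finally I would close the argument by interval inclusion. For every $\nu\in\mathcal J$ the point $(\nu,p_1(\nu))$ lies in $\mathcal J\times P_1$ and $p_1'(\nu)\in p_1'(\mathcal J)$, so the real number $\psi'(\nu)$ belongs to the set
\[
\frac{\partial^2 G_k}{\partial\nu\,\partial p_1}(\mathcal J\times P_1)+\frac{\partial^2 G_k}{\partial p_1^2}(\mathcal J\times P_1)\,p_1'(\mathcal J),
\]
which does not contain $0$ by \textbf{C4}. Hence $\psi'(\nu)\neq0$ for all $\nu\in\mathcal J$, so $\psi$ is strictly monotone on the interval $\mathcal J$ and has at most one zero. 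Combined with the existence already established, this yields exactly one intersection point $(\nu^*,p_1^*)$.
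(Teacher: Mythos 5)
Your proposal is correct and follows essentially the same route as the paper: existence via Lemma~\ref{lem:intersection}, and uniqueness by showing that the derivative of $g_k$ restricted to the fixed-point curve is exactly the quantity that \textbf{C4} excludes from zero. The only cosmetic differences are that you obtain the two second-derivative identities by differentiating the factorization (\ref{eq:GkFactorization}) rather than the integral representation (\ref{eq:gk}), and that you conclude via strict monotonicity of $\psi$ instead of integrating $\psi'$ between two hypothetical intersection points.
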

\begin{proof}
From \textbf{C2--C3} and Lemma~\ref{lem:intersection} the curves $\mathcal J\ni\nu\to p_1(\nu)\in P_1$ and $P_1\ni p_1\to \nu(p_1)\in\mathcal J$ intersect at some point $(\nu^*,p_1^*)$. Assume that $(\hat \nu,p_1(\hat\nu))$ is another intersection point for some $\hat\nu\in \mathcal J$. For $\tau\in[0,1]$ we define $w(\tau)=\nu^*+\tau(\hat\nu-\nu^*)$ and 
$$
u(\tau) = g(w(\tau),p_1(w(\tau)).
$$
From \textbf{C3} we have $u(0)=u(1)=0$ and $0=u(1)-u(0) = S(\hat\nu-\nu^*)$, where
\begin{equation}\label{eq:S}
S=\int_0^1\frac{\partial g}{\partial\nu}(u(\tau)) 
  + \frac{\partial g}{\partial p_1}(u(\tau))p_1'(w(\tau))d\tau.
\end{equation}
We will argue, that $S\in\frac{\partial^2 G_k}{\partial \nu\partial p_1}(\mathcal J\times P_1) + \frac{\partial^2 G_k}{\partial p_1^2}(\mathcal J\times P_1)p_1'(\mathcal J)$ and thus by \textbf{C4} there must be $\hat\nu=\nu^*$. For $(\nu,p_1)\in\mathcal J\times P_1$ we have
\begin{eqnarray*}
 \frac{\partial g}{\partial p_1}(\nu,p_1) &=& 
 \int_0^1\frac{\partial^2G}{\partial p_1^2}(\nu,t(p_1-p_1(\nu))+p_1(\nu))tdt\quad\text{and}\\
 \frac{\partial g}{\partial\nu}(\nu,p_1) &=& 
 \int_0^1\frac{\partial^2G}{\partial\nu\partial p_1}(\nu,t(p_1-p_1(\nu))+p_1(\nu))dt + \\
 && \int_0^1\frac{\partial^2G}{\partial p_1^2}(\nu,t(p_1-p_1(\nu))+p_1(\nu))p_1'(\nu)(1-t)dt.
\end{eqnarray*}
In particular, for fixed $\tau\in[0,1]$ and $(\nu,p_1)=u(\tau)$, both second partial derivatives of $G$ are evaluated at a point which does not depend on $t$. Therefore
\begin{equation}\label{eq:gp1}
 \frac{\partial g}{\partial p_1}(u(\tau)) = 
 \int_0^1\frac{\partial^2G}{\partial p_1^2}(u(\tau))tdt = \frac{1}{2}\frac{\partial^2G}{\partial p_1^2}(u(\tau)).
\end{equation}
Similarly
\begin{multline}\label{eq:gnu}
 \frac{\partial g}{\partial\nu}(u(\tau)) =
 \int_0^1\frac{\partial^2G}{\partial\nu\partial p_1}(u(\tau))
 + \frac{\partial^2G}{\partial p_1^2}(u(\tau))p_1'(w(\tau))(1-t)dt = \\
 \frac{\partial^2G}{\partial\nu\partial p_1}(u(\tau))
 + \frac{1}{2}\frac{\partial^2G}{\partial p_1^2}(u(\tau))p_1'(w(\tau)).
\end{multline}
Using (\ref{eq:S})--(\ref{eq:gnu}) we obtain 
\begin{equation*}
S = \int_0^1\frac{\partial^2G}{\partial\nu\partial p_1}(u(\tau))
 + \frac{\partial^2G}{\partial p_1^2}(u(\tau))p_1'(w(\tau))d\tau\in\frac{\partial^2 G_k}{\partial \nu\partial p_1}(\mathcal J\times P_1) + \frac{\partial^2 G_k}{\partial p_1^2}(\mathcal J\times P_1)p_1'(\mathcal J).
\end{equation*}
\end{proof}


\subsection{Validation of period-tupling and \tagname bifurcations}\label{sec:tuptag}
Assumptions \textbf{C1--C4} are easily checkable by means of rigorous numerics. The curve of $R$-symmetric period-$2k$ points is defined on an explicit domain $P_1$, which makes it possible for further continuation of this branch by the standard methods \cite{BW,BLM,GLM,WZ3} (see also ~\ref{sec:continuation}). On the other hand, Lemma~\ref{lem:uniqueIntersection} guarantees, that there is an unique bifurcation point in the domain under consideration. The type of bifurcation, however, is unknown.

In what follows, we derive conditions, which along with \textbf{C1--C4} guarantee, that period tupling or \tagname bifurcation occurs in the sense of Definition~\ref{def:tupDefinition} and Definition~\ref{def:touchDefinition}, respectively.

\begin{theorem}\label{thm:tag} Under assumptions \textbf{C1--C4}, if $k>1$ is odd and 
 \begin{equation}\label{eq:nondegeneracyConditionTAG}
0\notin\frac{\partial^2 G_k}{\partial p_1^2}(\mathcal J\times P_1)
 \end{equation}
then  $f_\nu^2$ has $k^{\mathrm{th}}$-order \tagname bifurcation at some point $(\nu^*, x^*)\in \mathcal{J} \times  P_1\times P_2\times \{0\}$.
\end{theorem}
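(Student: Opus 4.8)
The plan is to verify Definition~\ref{def:touchDefinition} for the map $g_\nu:=f_\nu^2$, for which $g_\nu^p=f_\nu^{2p}$, by producing the two branches $\xfp$ and $x_b$. First I would record the correspondence between the objects at hand and the solution set of (\ref{eq:ImplicitEquation}). Since $R$ is a reversing symmetry, a point $x\in\Fix(R,U)$ satisfies $f_\nu^k(x)\in\Fix(R)$ if and only if $f_\nu^{2k}(x)=x$, so the solutions of (\ref{eq:ImplicitEquation}) coincide with $\Fix(f_\nu^{2k},U,\mathcal C_R)$. By \textbf{C2} and the Lyapunov--Schmidt reduction these solutions are in bijection with the zeros of the bifurcation function $G_k$ from (\ref{eq:Gk}), and the factorisation (\ref{eq:GkFactorization}) splits the zero set into the primary curve $p_1=p_1(\nu)$ and the branch $\{g_k=0\}$. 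I would take $\xfp$ as in (\ref{eq:fpparam}); then condition~(1) of Definition~\ref{def:touchDefinition} for $1\le p<k$ is exactly (\ref{eq:fpEquation}).

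The heart of the proof is to realise the branch $\{g_k=0\}$ as a single smooth graph $x_b(\nu)$ over the parameter. Here I would use (\ref{eq:nondegeneracyConditionTAG}): since $\partial^2 G_k/\partial p_1^2$ has constant sign on $\mathcal J\times P_1$, each slice $p_1\mapsto G_k(\nu,p_1)$ is strictly convex (or concave) and hence has at most two zeros on $P_1$. One of them is always $p_1(\nu)$, so there is at most one further (branch) zero for every $\nu$. Combining this with \textbf{C3} and the uniqueness supplied by Lemma~\ref{lem:uniqueIntersection}, I would show that $\nu(a)=\nu(b)$ with $a\ne b$ on the branch is impossible: for $\nu\ne\nu^*$ it produces three distinct zeros of $G_k(\nu,\cdot)$, while at $\nu^*$ it produces a zero in addition to the double zero that (\ref{eq:GkFactorization}) forces at $p_1^*$, both contradicting convexity. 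Thus the function $\nu(p_1)$ of \textbf{C3} is injective. To obtain smoothness I would differentiate $g_k(\nu(p_1),p_1)=0$: off $\nu^*$ the two zeros of $G_k(\nu,\cdot)$ are simple, so (\ref{eq:GkFactorization}) gives $\partial g_k/\partial p_1\ne0$ along the branch, and at $\nu^*$ the branch point lies on the primary curve, where (\ref{eq:gp1}) together with (\ref{eq:nondegeneracyConditionTAG}) yields $\partial g_k/\partial p_1=\tfrac12\,\partial^2 G_k/\partial p_1^2\ne0$. Hence $\nu'(p_1)\ne0$ on $P_1$, so $\nu$ is a diffeomorphism onto an interval containing $\nu^*$ and $x_b(\nu):=(\nu^{-1}(\nu),p_2(\nu,\nu^{-1}(\nu)),0)$ is the desired branch.

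It then remains to assemble the definition. Choosing $[\nu_1,\nu_2]\subset\mathrm{int}\,\nu(P_1)\cap\mathcal J$ with $\nu^*\in\mathrm{int}[\nu_1,\nu_2]$, both $\xfp$ and $x_b$ take values in $\mathrm{int}\,U$ by \textbf{C2}; for $\nu\ne\nu^*$ the two zeros of $G_k(\nu,\cdot)$ are distinct, giving $\Fix(f_\nu^{2k},U,\mathcal C_R)=\{\xfp(\nu),x_b(\nu)\}$ (condition~(1) for $p=k$), while Lemma~\ref{lem:uniqueIntersection} provides the single intersection $\xfp(\nu^*)=x_b(\nu^*)=x^*$ (condition~(2)). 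The hypothesis that $k$ is odd enters at this point: by reversibility a symmetric period-$k$ orbit of $f_\nu^2$ meets $\Fix(R)$ in exactly one point when $k$ is odd and in two points when $k$ is even, so the single branch point $x_b(\nu)$ accounts for the whole symmetric content of the bifurcating orbit and the geometry is \tagname rather than period-tupling; for even $k$ the two symmetric points per orbit would force a third zero of $G_k(\nu,\cdot)$ and (\ref{eq:nondegeneracyConditionTAG}) could not hold.

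I expect the main obstacle to be exactly this passage from the pointwise sign condition (\ref{eq:nondegeneracyConditionTAG}) to the global geometric statement that the bifurcating branch is a single graph crossing the primary branch transversally. The convexity bound on the number of zeros is clean, but the bifurcation value $\nu^*$ --- where the two zeros coalesce into the double zero of (\ref{eq:GkFactorization}) --- requires separate treatment, as does upgrading injectivity of $\nu(p_1)$ to the nonvanishing of $\nu'$ needed for smoothness of $x_b$. The reversibility bookkeeping, in particular the charts near $f_{\hat\nu}(\hat x)$ that let $f_\nu^k(x)\in\Fix(R)$ be written as $\pi_q=0$, is routine but must be carried out to justify identifying the solutions of (\ref{eq:ImplicitEquation}) with $\Fix(f_\nu^{2k},U,\mathcal C_R)$.
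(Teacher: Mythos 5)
Your proposal is correct and follows essentially the same route as the paper: Lemma~\ref{lem:uniqueIntersection} for the unique intersection, the identity $\frac{\partial g_k}{\partial p_1}(\nu^*,p_1^*)=\frac{1}{2}\frac{\partial^2 G_k}{\partial p_1^2}(\nu^*,p_1^*)\neq 0$ from (\ref{eq:gp1}) and (\ref{eq:nondegeneracyConditionTAG}), hence $\nu'(p_1^*)\neq0$, and then inversion of $\nu(p_1)$ to produce $x_b$. The only real difference is that you upgrade this to a global statement on all of $P_1$ via the convexity/zero-counting argument, whereas the paper simply inverts $\nu$ locally near $p_1^*$ and shrinks $[\nu_1,\nu_2]$ and $U$ to the set $U_\varepsilon=P_1\times P_2\times(-\varepsilon,\varepsilon)^n$, which suffices for Definition~\ref{def:touchDefinition}; your extra step is correct but not needed.
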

\begin{proof}
From Lemma~\ref{lem:uniqueIntersection} there is a unique intersection point $(\nu^*,p_1^*)$ of two curves of zeroes of $G_k$ in $\mathcal J\times P_1$. 
From (\ref{eq:nondegeneracyConditionTAG}) it follows that 
\begin{equation}\label{eq:nuPrimNonzero}
\frac{\partial g_k}{\partial p_1}(\nu^*,p_1^*) = \int_0^1\frac{\partial^2 G_k}{\partial p_1^2}(\nu^*,p_1^*)tdt = 
\frac{1}{2}\frac{\partial^2 G_k}{\partial p_1^2}(\nu^*,p_1^*)\neq 0.
\end{equation}
Differentiating the identity $g_k(\nu(p_1),p_1)\equiv 0$ we obtain
$$
\frac{\partial g_k}{\partial \nu}(\nu^*,p_1^*)\nu'(p_1^*) + \frac{\partial g_k}{\partial p_1}(\nu^*,p_1^*) = 0.
$$
From the above and (\ref{eq:nuPrimNonzero}) we conclude, that $\nu'(p_1^*)\neq0$. Therefore, there is an interval $[\nu_1,\nu_2]\subset \mathcal J$, such that $\nu^*=\nu(p_1^*)\in(\nu_1,\nu_2)$ and the inverse function $\widetilde x_b = \nu^{-1}$ is defined on it. Since $p_1^*\in\mathrm{int}\, P_1$, the interval $[\nu_1,\nu_2]$ can be chosen so that $\widetilde x_b([\nu_1,\nu_2])\subset \mathrm{int}\,P_1$.

Define $U_\varepsilon = P_1\times P_2\times (-\varepsilon,\varepsilon)^n$. 
We assumed, that $\Fix(R)$ is a submanifold given locally by $q=0$. Therefore, there exists sufficiently small $\varepsilon>0$, such that for $\nu\in[\nu_1,\nu_2]$ we have 
$$\Fix(f_\nu^{2k},U_{\varepsilon},\mathcal C_R) = \Fix(f_\nu^{2k},P_1\times P_2\times\{0\},\mathcal C_R) = \{\xfp(\nu),x_b(\nu)\},$$ where 
$$
x_b(\nu) = \left(\widetilde x_b(\nu),p_2(\nu,\widetilde x_b(\nu)),0\right)\in U_{\varepsilon}.
$$
From (\ref{eq:fpEquation}) it follows that 
$$\Fix(f_\nu^{2i},U_{\varepsilon},\mathcal C_R) = \{\xfp(\nu)\}$$
for $i=1,\ldots,k-1$. Thus, all requirements from Definition~\ref{def:touchDefinition} are satisfied on the set $[\nu_1,\nu_2]\times U_{\varepsilon}$ and for the point $x^* = (p_1^*,p_2(\nu^*,p_1^*),0)$.
\end{proof}


The next theorem provides a framework for computer-assisted verification of the presence of period $k$-tupling bifurcation.
\begin{theorem}\label{thm:pt} Assume \textbf{C1--C4}. If $k\geq 2$ is even and 
 \begin{equation}\label{eq:nondegeneracyConditionPT}
\frac{\partial^3 G_k}{\partial p_1^3}(\mathcal J\times P_1) \cdot \left(\frac{\partial^2 G_k}{\partial \nu\partial p_1}(\mathcal J\times P_1) \right)^{-1} \subset \mathbb R_-,
\end{equation}
then $f_\nu^2$ has period $k$-tupling bifurcation at some point $(\nu^*, x^*)\in \mathcal{J} \times  P_1\times P_2\times \{0\}$.
\end{theorem}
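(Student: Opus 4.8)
The plan is to mirror the structure of the proof of Theorem~\ref{thm:tag}, but now work with an even $k$ and produce the \emph{two} branching curves $x_{b_1},x_{b_2}$ required by Definition~\ref{def:tupDefinition} rather than a single transverse crossing. As before, Lemma~\ref{lem:uniqueIntersection} supplies a unique intersection point $(\nu^*,p_1^*)$ of the two zero-curves of $G_k$. The decisive structural difference is the sign condition (\ref{eq:nondegeneracyConditionPT}): it forces $\partial^3 G_k/\partial p_1^3$ and $\partial^2 G_k/\partial\nu\partial p_1$ to have opposite signs throughout $\mathcal J\times P_1$, which is precisely the pitchfork (period-doubling-type) non-degeneracy. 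First I would compute the relevant derivatives of the reduced bifurcation function $g_k$ at the intersection point. Using the integral representation (\ref{eq:gk}) exactly as in (\ref{eq:nuPrimNonzero}), and noting that here $\partial^2 G_k/\partial p_1^2$ need \emph{not} be nonzero, I would instead extract the leading behaviour of $g_k$ from the third derivative: differentiating (\ref{eq:gk}) twice and evaluating at $p_1=p_1(\nu)$ gives $\partial g_k/\partial p_1$ and $\partial^2 g_k/\partial p_1^2$ in terms of $\partial^2 G_k/\partial p_1^2$ and $\partial^3 G_k/\partial p_1^3$.

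Next I would analyse the zero-set $g_k=0$, which by \textbf{C3} is the graph $p_1\mapsto\nu(p_1)$. The key claim is that at $p_1^*$ this curve has a genuine extremum in $\nu$, opening toward $\nu>\nu^*$ (so that the branching points exist only on one side, $[\nu^*,\nu_2]$, as Definition~\ref{def:tupDefinition} demands), and that for each $\nu\in(\nu^*,\nu_2]$ the equation $\nu(p_1)=\nu$ has exactly two solutions $p_1$ on either side of $p_1^*$. Concretely I expect to show $\nu'(p_1^*)=0$ together with $\nu''(p_1^*)\neq0$, the sign of $\nu''(p_1^*)$ being governed by (\ref{eq:nondegeneracyConditionPT}): implicitly differentiating $g_k(\nu(p_1),p_1)\equiv 0$ once gives $\nu'(p_1^*)=-(\partial g_k/\partial p_1)/(\partial g_k/\partial\nu)$, and the vanishing of $\partial^2 G_k/\partial p_1^2$ at the intersection (which I must first establish, presumably from the reversible symmetry forcing $G_k$ to be odd in the shifted variable $p_1-p_1(\nu)$ so that even-order derivatives drop out) yields $\nu'(p_1^*)=0$; differentiating again and substituting produces $\nu''(p_1^*)$ proportional to $(\partial^3 G_k/\partial p_1^3)\cdot(\partial^2 G_k/\partial\nu\partial p_1)^{-1}$, which (\ref{eq:nondegeneracyConditionPT}) pins to a definite sign. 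This gives a local fold of the curve $\nu(p_1)$ at $(\nu^*,p_1^*)$, hence two inverse branches $\widetilde x_{b_1},\widetilde x_{b_2}$ defined on $[\nu^*,\nu_2]$, from which I build $x_{b_1},x_{b_2}$ lifting through the Lyapunov--Schmidt graph $p_2(\nu,\cdot)$ and the manifold $q=0$, exactly as $x_b$ was built in Theorem~\ref{thm:tag}.

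It then remains to verify condition~(1) of Definition~\ref{def:tupDefinition}, counting symmetric fixed points of $f_\nu^{2p}$ for $1\le p\le k$, and condition~(2), the period-$2k$ pairing $f_\nu^{k}(x_{b_1})=x_{b_2}$ and vice versa. The fixed-point count follows as in Theorem~\ref{thm:tag} from (\ref{eq:fpEquation}) for $p<k$ together with the fold structure for $p=k$, after thickening to $U_\varepsilon=P_1\times P_2\times(-\varepsilon,\varepsilon)^n$. The pairing in~(2) is where the reversibility must be used most carefully: since $x_{b_1},x_{b_2}$ are $R$-symmetric zeros of $\pi_q(f_\nu^k(\cdot))$ lying on $\Fix(R)$, the reversing-symmetry relation $R^{-1}\circ f\circ R\circ f=\Id$ forces $f_\nu^{k}$ to map one branch to the other; I would argue that the two distinct solutions on the fold are exchanged by the half-iterate $f_\nu^{k/2}$, which is well defined because $k$ is even. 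The hardest part will be exactly this last point --- rigorously pinning down that the half-iterate exchanges $x_{b_1}$ and $x_{b_2}$ (rather than fixing each, which would correspond to a different resonance), since this requires tracking how $R$ acts on the two sheets of the fold and ruling out that either branch is itself a lower-period point; the sign condition (\ref{eq:nondegeneracyConditionPT}) and oddness of $G_k$ in $p_1-p_1(\nu)$ should be enough to close this, but it is the step I would expect to require the most delicate bookkeeping.
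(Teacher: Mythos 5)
Your skeleton coincides with the paper's: Lemma~\ref{lem:uniqueIntersection} gives the unique intersection $(\nu^*,p_1^*)$; one then shows $\nu'(p_1^*)=0$, computes $\frac{\partial g_k}{\partial \nu}(\nu^*,p_1^*)=\frac{\partial^2 G_k}{\partial p_1\partial\nu}(\nu^*,p_1^*)$ and $\frac{\partial^2 g_k}{\partial p_1^2}(\nu^*,p_1^*)=\frac{1}{3}\frac{\partial^3 G_k}{\partial p_1^3}(\nu^*,p_1^*)$, and concludes from (\ref{eq:nondegeneracyConditionPT}) that $\nu''(p_1^*)>0$, so that $p_1\mapsto\nu(p_1)$ has a proper local minimum whose two inverse branches, lifted through the Lyapunov--Schmidt graph, give $x_{b_1},x_{b_2}$ on $[\nu^*,\nu_2]$. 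All of that is exactly the paper's computation.

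The gap is at the pivotal step $\nu'(p_1^*)=0$, equivalently $\frac{\partial^2 G_k}{\partial p_1^2}(\nu^*,p_1^*)=0$. You propose to obtain it from ``reversibility forcing $G_k$ to be odd in $p_1-p_1(\nu)$,'' flagged only as presumable. That claim is unproved and stronger than what is true: nothing in \textbf{C1--C4} or in $R$-reversibility yields a global parity of $G_k$ in the shifted variable, and the paper asserts no such thing --- it proves only the single vanishing of $\frac{\partial^2 G_k}{\partial p_1^2}$ at the intersection point. The actual mechanism, and the place where evenness of $k$ enters, is dynamical: take $p_1^n\to p_1^*$ with $p_1^n\neq p_1^*$ on the branching curve, set $\nu^n=\nu(p_1^n)$, $x^n=(p_1^n,p_2(\nu^n,p_1^n),0)$ and $y^n=f_{\nu^n}^k(x^n)$. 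Since $x^n$ is an $R$-symmetric periodic point of principal period $2k$ and $k$ is even, $y^n$ is a second, \emph{distinct} $R$-symmetric solution of (\ref{eq:ImplicitEquation}) at the same parameter value $\nu^n$, lying in $P_1\times P_2\times\{0\}$ for large $n$, with $\pi_{p_1}y^n\to p_1^*$ and $\nu(\pi_{p_1}y^n)=\nu^n=\nu(p_1^n)$; Rolle's theorem then places a zero of $\nu'$ between $p_1^n$ and $\pi_{p_1}y^n$, and the limit gives $\nu'(p_1^*)=0$. Note that this same observation --- $f_\nu^k$ permutes the two $R$-symmetric points of a symmetric period-$2k$ orbit and cannot fix either --- is also precisely what settles condition (2) of Definition~\ref{def:tupDefinition} (the exchange $f_\nu^{k}(x_{b_1}(\nu))=x_{b_2}(\nu)$ for the map $f_\nu^2$), which you correctly flagged as delicate but treated as a separate, final hurdle. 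Without this argument your derivation of $\nu'(p_1^*)=0$ does not close, and the rest of the proof has nothing to stand on.
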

\begin{proof}
From Lemma~\ref{lem:uniqueIntersection} we know, that the curves $\mathcal J\ni\nu\to p_1(\nu)\in P_1$ and $P_1\ni p_1\to \nu(p_1)\in\mathcal J$ intersect at exactly one point $(\nu^*,p_1^*)$. We will prove, that $p_1^*$ is a proper local minimum of $p_1\to \nu(p_1)$. First we will show, that $\nu'(p^*_1)=0$.

Define $x^* = (p_1^*,p_2(\nu^*,p_1^*),0)$. Let us choose a sequence $\{p_1^n\}_{n\in\mathbb N}$, such that $p_1^n\neq p_1^*$ and $\lim_{n\to\infty}p_1^n=p_1^*$. Define $\nu^n = \nu(p_1^n)$ and $x^n = (p_1^n,p_2(\nu^n,p_1^n),0)$. Since $p_1^n\neq p_1^*$ and the intersection point is unique it follows, that $x^n$ is period-$2k$ point. Since $p_1^*\in\mathrm{int}\, P_1$, $k$ is even and $x^*$ is a fixed point for $f_{\nu^*}^2$ we conclude, that for $n$ large enough the point $y^n = f_{\nu^n}^k(x^n)$ satisfies $\pi_{p_1}y^n\in\mathrm{int}\,P_1$ and
\begin{eqnarray*}
\nu(\pi_{p_1}(y^n)) &=& \nu^n,\\
\lim_{n\to\infty}\pi_{p_1}(y^n) &=& \pi_{p_1}(x^*) = p_1^*.
\end{eqnarray*}
The point $y^n$ is an $R$-symmetric, $2k$-periodic point for $f_{\nu^n}$, so it solves (\ref{eq:ImplicitEquation}). Therefore $y^n\in P_1\times P_2\times\{0\}$. Since $x^n\neq y^n$,  assumption \textbf{C2} implies, that also $p_1^n\neq \pi_{p_1}y^n$.  By the Rolle's Theorem there is a point in the interval joining $p_1^n$ and $\pi_{p_1}y^n$ at which $\nu'$ vanishes. In consequence, arbitrary close to $p_1^*$ there is a point at which $\nu'$ is zero, which by the smoothness of $\nu'(p_1)$ implies $\nu'(p_1^*)=0$.

We will show that $\nu''(p_1^*)>0$. Differentiating $g_k(\nu(p_1),p_1)\equiv 0$  we obtain 
\begin{equation*}
 \frac{\partial g_k}{\partial p_1}(\nu^*,p_1^*) = -\frac{\partial g_k}{\partial \nu}(\nu^*,p_1^*)\nu'(p_1^*) = 0.
\end{equation*}
On the other hand 
\begin{equation*}
 0 = \frac{\partial g_k}{\partial p_1}(\nu^*,p_1^*) = \int_0^1\frac{\partial^2 G_k}{\partial p_1^2}(\nu^*,p_1^*)tdt = \frac{1}{2}\frac{\partial^2 G_k}{\partial p_1^2}(\nu^*,p_1^*).
\end{equation*}
Therefore 
\begin{equation}\label{eq:dgdv}
 \frac{\partial g_k}{\partial \nu}(\nu^*,p_1^*) = \int_0^1\frac{\partial^2 G_k}{\partial p_1\partial \nu}\left(\nu^*,p_1^*\right) + \frac{\partial^2 G_k}{\partial p_1^2}\left(\nu^*,p_1^*\right)(1-t)p_1'(\nu^*)dt 
 = \frac{\partial^2 G_k}{\partial p_1\partial \nu}\left(\nu^*,p_1^*\right).
\end{equation}
We also have
\begin{equation}\label{eq:d2gdp2}
 \frac{\partial^2 g_k}{\partial p_1^2}(\nu^*,p_1^*) = \int_0^1\frac{\partial^3 G_k}{\partial p_1^3}\left(\nu^*,p_1^*\right)t^2dt = \frac{1}{3}\frac{\partial^3 G_k}{\partial p_1^3}(\nu^*,p_1^*).
\end{equation}
Differentiating twice $g_k(\nu(p_1),p_1)\equiv 0$ with respect to $p_1$, we obtain 
\begin{equation}
 \frac{\partial g_k}{\partial \nu}(\nu^*,p_1^*)\nu''(p_1^*)
 + \frac{\partial^2 g_k}{\partial \nu^2}(\nu^*,p_1^*)\left(\nu'(p_1^*)\right)^2
 + 2\frac{\partial^2 g_k}{\partial p_1\partial \nu}(\nu^*,p_1^*)\nu'(p_1^*)
 + \frac{\partial^2 g_k}{\partial p_1^2}(\nu^*,p_1^*) 
 \equiv 0.\label{eq:d2gdv2} 
\end{equation}
Using $\nu'(p_1^*)=0$ and (\ref{eq:nondegeneracyConditionPT})--(\ref{eq:d2gdv2}) we conclude, that 
\begin{equation*}
\nu''(p_1^*) = -\frac{\partial^2 g_k}{\partial p_1^2}(\nu^*,p_1^*)\left(\frac{\partial g_k}{\partial \nu}(\nu^*,p_1^*)\right)^{-1} >0.
\end{equation*}
This proves, that the function $p_1\to\nu(p_1)$ has a proper local minimum at $p_1^*$. 

Now, we will construct the functions $x_{b_{1}},x_{b_{2}}$ as required by Definition~\ref{def:tupDefinition}. Since $\nu''(p_1^*)>0$, we can shrink $P_1$ in such a way, that $p_1^*\in\mathrm{int}\,P_1$, $\nu(\min P_1)=\nu(\max P_1)$ and $\nu$ is strictly convex on $P_1$. Given that $p_1(\nu)$ is smooth (so it has bounded slope near $\nu^*$) and $\nu'(p_1^*)=0$ we conclude, that for $\nu\in[\nu^*,\nu(\min P_1)]$ there holds $p_1(\nu)\in \mathrm{int}\,P_1$. Shrinking $\mathcal J$ from below, if necessary, we may also assume that $p_1(\nu)\in \mathrm{int}\,P_1$ for $\nu<\nu^*$ and $\nu\in\mathcal J$. 

Let us fix $\nu_2\in(\nu^*,\nu(\min P_1))$. The function $\nu(p_1)$ is monotone on $[\min P_1,p_1^*]$ and $[p_1^*,\max P_1]$. Therefore, for $\widetilde \nu\in[\nu^*,\nu_2]$ 
\begin{equation*}
 \{p_1 : \nu(p_1) = \widetilde \nu \}\ =\ \{\widetilde x_{b_1}(\widetilde \nu),\widetilde x_{b_2}( \widetilde \nu)\}
\end{equation*}
and the functions $\widetilde x_{b_i}$ can be chosen to be continuous and smooth in $(\nu^*,\nu_2)$. Define $U_\varepsilon=P_1\times P_2\times (-\varepsilon,\varepsilon)^n$. For some sufficiently small $\varepsilon>0$ the functions 
\begin{equation*}
 x_{b_1},x_{b_2}:[\nu^*,\nu_2]\to \mathrm{int}U_{\varepsilon}
\end{equation*}
given by 
\begin{equation*}
 x_{b_i}(\nu) = \left(\widetilde x_{b_i}(\nu),p_2(\nu,\widetilde x_{b_i}(\nu)),0\right).
\end{equation*}
satisfy conditions from Definition~\ref{def:tupDefinition}. From (\ref{eq:fpEquation}) it follows that 
$$\Fix(f_\nu^{2i},U_{\varepsilon},\mathcal C_R) = \{\xfp(\nu)\}$$
for $i=1,\ldots,k-1$, which completes the proof.
\end{proof}

\subsection{\Isoname bifurcations}\label{sec:isochronous}
A special type of bifurcation covered by Definition~\ref{def:tupDefinition} is when $k=1$. In the case of reversible maps, we are looking for the set of fixed points of $f_\nu^2$, which near the bifurcation point is expected to be a union of two intersecting curves. Thus, the principal branch of fixed points of $f^2_\nu$ cannot be isolated by applying Interval Newton Operator (\ref{sec:continuation}) to (\ref{eq:ImplicitEquation}). Therefore, we need an extra constraint in order to isolate two curves that intersect at the bifurcation point. A possible scenario for this kind of bifurcation is breaking some symmetry $S$, which provides desired additional constraint $\mathcal{C}_S$, as defined by (\ref{eq:predicateSymmetry}).

Let $f\colon M\subset \mathcal{J}\times X\to X$ be a family of $R$--reversible and $S$--symmetric maps. As in Section~\ref{sec:intersection}, we focus on computation of the set of $R$--symmetric fixed points for $f^2_{\nu}$. 

Let $(\hat\nu,\hat x)\in M$ be an apparent bifurcation point and let $U\subset X$ be an open set, such that $\hat x\in U$ and $\mathcal J\times U\subset M$. We assume, that there are local coordinates in $U$, such that
\begin{equation*}
\Fix(R,U)=\left\{(p,q) \in U : p,q\in\mathbb R^n,\ q=0\right\}.
\end{equation*}
We impose that $\Fix(R,U)$ and $\Fix(S,U)$ intersect transversally. In the above settings, we expect that for $\nu\in\mathcal J$ the set of double-symmetric fixed points of $f_\nu^2$ in $U$ is a single point
\begin{equation*}
\Fix(f_\nu^2,U,\mathcal C_R\wedge \mathcal C_S) = \{x_{fp}(\nu)\}.
\end{equation*}
Thus, using the method described in~\ref{sec:continuation}, it is possible to isolate the curve 
\begin{equation}\label{eq:fpCurveIso}
\mathcal{J}\ni \nu \to \xfp(\nu) = (p_1(\nu),p_2(\nu),0)\in \mathbb R\times\mathbb R^{n-1}\times\mathbb R^n
\end{equation}
from the branching-off curves $x_{b_{1,2}}(\nu)\in\Fix(R,U)$, which intersect $\xfp(\nu)$ at exactly one point located in $\Fix(R,U)\cap \Fix(S,U)$.

The remaining steps in validation of the existence of a \isoname bifurcation go as described in Section~\ref{sec:intersection} and Section~\ref{sec:tuptag}. We perform the Lyapunov-Schmidt reduction (\ref{eq:LSreduction}) and we define the bifurcation functions $G_1$ and $g_1$ by (\ref{eq:Gk}) and (\ref{eq:gk}), respectively.

In Theorem~\ref{thm:pt} we assumed that the period of branching-off orbits is even. This lead us to a~conclusion, that after half of iterations, the points sufficiently close to the bifurcation point come back to its vicinity. This allowed us to prove, that the reduced bifurcation function $g_k$ has a local minimum at the bifurcation point, which was the crucial step in the proof of the presence of period-tupling bifurcation. 

In the case of the \isoname bifurcation we cannot use this argument, because $k=1$ is odd. The next theorem says, that commutativity of $R$ and $S$ yields to the same conclusion.

\begin{theorem}\label{thm:iso} 
Let $S$ and $R$ be a symmetry and a reversing symmetry for $f_\nu$, respectively. Assume that \textbf{C1--C4} are satisfied for $k=1$ and with $\xfp$ given by (\ref{eq:fpCurveIso}). If the symmetries $R$ and $S$ commute and (\ref{eq:nondegeneracyConditionPT}) is satisfied, then $f_\nu^2$ has \isoname bifurcation at some point $(\nu^*, x^*)\in \mathcal{J} \times  P_1\times P_2\times\{0\}$.
\end{theorem}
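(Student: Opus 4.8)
The plan is to follow the architecture of the proof of Theorem~\ref{thm:pt} almost verbatim, using the symmetry $S$ as a replacement for the half-period map $f_\nu^{k/2}$, which was available there only because $k$ was even. As in that proof, Lemma~\ref{lem:uniqueIntersection} first supplies a unique intersection point $(\nu^*,p_1^*)$ of the primary curve $\nu\mapsto p_1(\nu)$ with the branching curve $p_1\mapsto\nu(p_1)$, and I set $x^*=(p_1^*,p_2(\nu^*,p_1^*),0)$. The whole argument then reduces to establishing the two facts $\nu'(p_1^*)=0$ and $\nu''(p_1^*)>0$: the second follows, exactly as in Theorem~\ref{thm:pt}, from the non-degeneracy condition (\ref{eq:nondegeneracyConditionPT}) together with $\nu'(p_1^*)=0$ and the integral identities relating the partial derivatives of $g_1$ to those of $G_1$. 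Hence the only genuinely new work is the proof that $\nu'(p_1^*)=0$, where the evenness argument of Theorem~\ref{thm:pt} is unavailable for $k=1$.

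To obtain $\nu'(p_1^*)=0$ I would pick a sequence $p_1^n\to p_1^*$ with $p_1^n\neq p_1^*$, set $\nu^n=\nu(p_1^n)$ and $x^n=(p_1^n,p_2(\nu^n,p_1^n),0)$, and note as in Theorem~\ref{thm:pt} that uniqueness of the intersection forces $x^n$ to lie on the branching curve and off the primary curve; here this means $x^n$ is $R$-symmetric but not $S$-symmetric, since the primary curve (\ref{eq:fpCurveIso}) is precisely the locus of double-symmetric fixed points of $f_\nu^2$. As a substitute for the half-period image I would take $y^n:=S(x^n)$. Since $x^*$ is double-symmetric we have $S(x^*)=x^*$, so by continuity $y^n\to x^*$ and in particular $\pi_{p_1}(y^n)\to p_1^*$. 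Because $S$ is a symmetry and $R,S$ commute, $R(y^n)=R(S(x^n))=S(R(x^n))=S(x^n)=y^n$, so $y^n\in\Fix(R)$ is an $R$-symmetric period-$2$ point of $f_{\nu^n}$ and therefore solves (\ref{eq:ImplicitEquation}) at the parameter $\nu^n$. As $y^n$ is close to $x^*$, it lies on one of the two solution curves; it cannot lie on the primary one, for then $y^n=\xfp(\nu^n)$ and applying $S^{-1}$ to $S(\xfp(\nu^n))=\xfp(\nu^n)$ would force $x^n=\xfp(\nu^n)$, contradicting that $x^n$ is a genuine branching point. Hence $y^n$ lies on the branching curve and $\nu(\pi_{p_1}(y^n))=\nu^n$. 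Finally $\pi_{p_1}(y^n)\neq p_1^n$, since equality would make $y^n$ and $x^n$ share both parameter and $p_1$-coordinate, so by \textbf{C2} also the $p_2$-coordinate, giving $y^n=x^n$ and hence $x^n\in\Fix(S)$, a contradiction.

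With these properties in hand the conclusion $\nu'(p_1^*)=0$ follows exactly as in Theorem~\ref{thm:pt}: both $p_1^n$ and $\pi_{p_1}(y^n)$ are distinct preimages of the same value $\nu^n$ under $\nu(\cdot)$, so Rolle's Theorem yields a zero of $\nu'$ between them, and letting $n\to\infty$ produces zeros of $\nu'$ accumulating at $p_1^*$, whence $\nu'(p_1^*)=0$ by smoothness. The remaining steps — deducing $\nu''(p_1^*)>0$ and constructing the continuous functions $x_{b_1},x_{b_2}$ on $[\nu^*,\nu_2]$ which, together with $\xfp$, realise the configuration of Definition~\ref{def:tupDefinition} with $k=1$, i.e.\ the \isoname bifurcation — are identical to the corresponding steps of Theorem~\ref{thm:pt}. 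I expect the main obstacle to be the two steps ruling out the primary curve and forcing $\pi_{p_1}(y^n)\neq p_1^n$: both hinge on the clean separation of the two curves by the extra constraint $\mathcal C_S$, i.e.\ on the characterisation of the primary branch among $R$-symmetric period-$2$ points as the $S$-symmetric one, and it is exactly here that commutativity of $R$ and $S$ is indispensable, since it is what guarantees that $S$ maps $R$-symmetric points to $R$-symmetric points and thus keeps $y^n$ inside the solution set of (\ref{eq:ImplicitEquation}).
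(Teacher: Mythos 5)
Your proposal is correct and follows essentially the same route as the paper's proof: define $y^n=S(x^n)$, use commutativity of $R$ and $S$ to show $y^n$ is an $R$-symmetric solution of (\ref{eq:ImplicitEquation}) at $\nu^n$ converging to $x^*$, deduce $p_1^n\neq\pi_{p_1}(y^n)$ from $x^n\notin\Fix(S)$ and \textbf{C2}, apply Rolle's Theorem to get $\nu'(p_1^*)=0$, and then finish exactly as in Theorem~\ref{thm:pt}. The only difference is that you explicitly rule out $y^n$ lying on the primary curve before invoking $\nu(\pi_{p_1}(y^n))=\nu^n$, a small step the paper leaves implicit.
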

\begin{proof}
From Lemma~\ref{lem:uniqueIntersection} there is an unique $(\nu^*,p_1^*)\in\mathrm{int}\,\mathcal J\times P_1$, such that $p_1(\nu^*)=p_1^*$ and $\nu(p_1^*)=\nu^*$. Let us set $x^*:=(p_1(\nu^*),p_2(\nu^*),0)\in \Fix(S)\cap\Fix(R)$. 

We will show, that $\nu'(p_1^*)=0$. Take a sequence $\left\{p_1^n\right\}_{n\in\mathbb N}$ of points from $P_1$, which converges to $p_1^*$ and such that $p_1^n\neq p_1^*$ for $n\in\mathbb N$. Let us define $\nu^n = \nu(p_1^n)$, $x^n = (p_1^n,p_2(\nu^n,p_1^n),0)\in\Fix(R)$ and $y^n=S(x^n)$. By the commutativity of $R$ and $S$ we have
$$
R(y^n) = R(S(x^n)) = S(R(x^n)) = S(x^n) = y^n,
$$
hence $y^n\in\Fix(R)$. Similarly $f_{\nu^n}(y^n)\in \Fix(R)$, because
\begin{equation*}
R(f_{\nu^n}(y^n)) = 
R(f_{\nu^n}(S(x^n))) = 
R(S(f_{\nu^n}(x^n))) = 
S(R(f_{\nu^n}(x^n))) = 
S(f_{\nu^n}(x^n)) = f_{\nu^n}(y^n).
\end{equation*}
This shows, that $y^n$ solves (\ref{eq:ImplicitEquation}) with $\nu=\nu^n$  and $\nu(\pi_{p_1}(y^n)) = \nu^n$. Since 
\begin{equation}\label{eq:yconvtop}
\lim_{n\to\infty}\pi_{p_1}(y^n) = \pi_{p_1}(S(x^*)) = p_1^*
\end{equation}
we conclude, that $y^n\in P_1\times P_2\times \{0\}$ for $n$ large enough.

Since $p_1^n\neq p_1^*$ and the intersection point is unique, it follows that $x^n\notin \Fix(S)$, so $x^n\neq y^n$. This and \textbf{C2} imply that also $p_1^n\neq \pi_{p_1}y^n$. By the Rolle's Theorem there is a point in the interval joining $p_1^n$ and $\pi_{p_1}y^n$ at which $\nu'$ vanishes. Given that both $p_1^n$ and $\pi_{p_1}(y^n)$ (see (\ref{eq:yconvtop})) converge to $p_1^*$ we conclude, that arbitrarily close to $p_1^*$ there is a point at which $\nu'$ is zero and thus $\nu'(p_1^*) = 0$. 

The remaining part of the proof goes as in Theorem~\ref{thm:pt}. Condition (\ref{eq:nondegeneracyConditionPT}) implies, that $\nu''(p_1^*)>0$, which proves that $p_1^*$ is a proper minimum of $\nu(p_1)$. This makes it possible to define two branches $x_{b_1}$ and $x_{b_2}$, as required in Definition~\ref{def:tupDefinition}.
\end{proof}

\subsection{General unfolding of period-tupling and \tagname bifurcations}
\label{sec:unfolding}
Theorem~\ref{thm:tag} and Theorem~\ref{thm:pt} provide frameworks for computer-assisted verification, that the two types of bifurcations occurs in the sense of geometric conditions given by Definition~\ref{def:touchDefinition} and Definition~\ref{def:tupDefinition}, respectively. In this section we will show, that the same geometric assumptions lead to standard unfolding of these bifurcations.

\begin{theorem}\label{thm:pt_normal_form}
 Under assumptions of Theorem~\ref{thm:pt} (\textbf{C1--C4} and (\ref{eq:nondegeneracyConditionPT})), there is a smooth $\nu$-dependent substitution $p_1=p_1(\nu,w)$, which brings the bifurcation function (\ref{eq:Gk}) to the normal form 
 \begin{equation}\label{eq:pt_normal_form}
 G_k(\nu,w)=\alpha w(\nu-\beta w^2) + O(|w|^4),
 \end{equation}
 where $\alpha=\alpha(\nu)$ does not vanish at $\nu=0$ and $\beta=\beta(\nu)$ is positive at $\nu=0$.
\end{theorem}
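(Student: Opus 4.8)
**

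The plan is to reduce the bifurcation function to its normal form by a Taylor expansion centered at the bifurcation point, followed by a smooth change of coordinates that eliminates the lower-order obstructions. The starting point is the factorization (\ref{eq:GkFactorization}), namely $G_k(\nu,p_1)=(p_1-p_1(\nu))g_k(\nu,p_1)$, together with the key facts established in the proof of Theorem~\ref{thm:pt}: at the bifurcation point $(\nu^*,p_1^*)$ we have $\nu'(p_1^*)=0$, $\nu''(p_1^*)>0$, and the identities (\ref{eq:nuPrimNonzero}) and (\ref{eq:dgdv}) relating derivatives of $g_k$ to the second partial derivatives of $G_k$. Without loss of generality I would first translate coordinates so that $\nu^*=0$ and $p_1^*=0$; this matches the normalization in (\ref{eq:pt_normal_form}), where the coefficients are evaluated at $\nu=0$.

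First I would introduce the centered variable describing the offset from the principal branch, $w=p_1-p_1(\nu)$, and rewrite $G_k$ in terms of $(\nu,w)$. Under this substitution the factorization becomes $G_k=w\,g_k(\nu,p_1(\nu)+w)$, so the structure $G_k=w\cdot(\text{something})$ is automatic and reflects the persistence of the principal branch. The content then lies in the second factor. Using (\ref{eq:gp1}) and (\ref{eq:d2gdp2}), the Taylor expansion of $g_k$ in $w$ at the bifurcation point has the form $g_k=\tfrac12\frac{\partial^2 G_k}{\partial p_1^2}+\tfrac16\frac{\partial^3 G_k}{\partial p_1^3}\,w+\cdots$, evaluated along $p_1=p_1(\nu)+w$. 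Since $\frac{\partial^2 G_k}{\partial p_1^2}(\nu^*,p_1^*)=0$ (this is exactly (\ref{eq:nuPrimNonzero}) combined with $\nu'(p_1^*)=0$), the constant-in-$w$ term of $g_k$ vanishes at $\nu=0$ and is therefore, by smoothness, of the form $\nu\cdot a(\nu)$ plus higher-order corrections; the linear-in-$w$ term is controlled by $\frac{\partial^3 G_k}{\partial p_1^3}$, which is sign-definite by (\ref{eq:nondegeneracyConditionPT}).

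The main technical step is a near-identity rescaling of $w$ that absorbs the intermediate-order terms and produces the precise cubic shape $\nu-\beta w^2$. Writing $g_k(\nu,p_1(\nu)+w)=A(\nu)+B(\nu)w+C(\nu)w^2+O(|w|^3)$, I would seek a substitution $w\mapsto w+c_1(\nu)w^2+\cdots$ (equivalently, a $\nu$-dependent reparametrization $p_1=p_1(\nu,w)$) that kills the quadratic coefficient $C(\nu)$ in the factor, leaving $g_k=A(\nu)+B(\nu)w+O(|w|^3)$. The coefficient identities give $A(0)=0$ with $A'(0)\ne 0$ (from $\nu''(p_1^*)>0$ and the relation $\nu''=-\tfrac{\partial^2 g_k}{\partial p_1^2}(\tfrac{\partial g_k}{\partial\nu})^{-1}$), so $A(\nu)=\alpha_0\nu+O(\nu^2)$, while $B(0)=\tfrac16\frac{\partial^3 G_k}{\partial p_1^3}(0)\ne 0$. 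Factoring out $\alpha(\nu)$ and identifying $\beta=-B/A'$ as positive at $\nu=0$ — this positivity is precisely the sign condition (\ref{eq:nondegeneracyConditionPT}) translated through (\ref{eq:dgdv}) — yields $G_k=\alpha w(\nu-\beta w^2)+O(|w|^4)$.

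The hard part will be bookkeeping the $\nu$-dependence of the change of variables while guaranteeing it is genuinely smooth and near-identity, so that no new terms of order lower than $|w|^4$ are reintroduced and the error genuinely collects as $O(|w|^4)$. In particular one must verify that the quadratic-killing substitution is invertible in a neighborhood of the bifurcation point (its Jacobian in $w$ is $1$ at $w=0$, so the inverse function theorem applies uniformly in $\nu$) and that the factor $\alpha(\nu)$ remains nonvanishing; both follow from the nonvanishing of $\frac{\partial^2 G_k}{\partial\nu\partial p_1}$ and $\frac{\partial^3 G_k}{\partial p_1^3}$ already guaranteed by \textbf{C4} and (\ref{eq:nondegeneracyConditionPT}), but the uniform control of remainders across the parameter interval is where the estimates must be handled with care rather than pointwise at $\nu=0$.
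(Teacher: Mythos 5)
Your overall route is the paper's: substitute $w=p_1-p_1(\nu)$ so that $G_k=w\,g_k(\nu,p_1(\nu)+w)$, Taylor expand, remove the obstructing middle term by a near-identity quadratic substitution, and read off $\alpha$, $\beta$ using \textbf{C4} and (\ref{eq:nondegeneracyConditionPT}). However, your bookkeeping of the Taylor coefficients of $g_k$ is shifted by one degree, and this breaks the argument at the decisive step. From (\ref{eq:gk}) one has $g_k(\nu,p_1(\nu)+w)=\frac{\partial G_k}{\partial p_1}(\nu,p_1(\nu))+\frac12\frac{\partial^2 G_k}{\partial p_1^2}(\nu,p_1(\nu))\,w+\frac16\frac{\partial^3 G_k}{\partial p_1^3}(\nu,p_1(\nu))\,w^2+O(|w|^3)$, so in your notation $A(\nu)=g_k(\nu,p_1(\nu))$, $B(\nu)=\frac12\frac{\partial^2 G_k}{\partial p_1^2}(\nu,p_1(\nu))$ and $C(\nu)=\frac16\frac{\partial^3 G_k}{\partial p_1^3}(\nu,p_1(\nu))$. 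Hence $B(0)=0$ (this is exactly $\frac{\partial g_k}{\partial p_1}(\nu^*,p_1^*)=0$, not $\neq 0$ as you assert), while it is $C(0)$ that is nonzero by (\ref{eq:nondegeneracyConditionPT}). Consequently the term you propose to annihilate is the wrong one: $C(\nu)w^2$ contributes the cubic term $C w^3$ of $G_k$, which is precisely the $-\alpha\beta w^3$ of the pitchfork normal form; killing it would leave $G_k=A(\nu)w+B(\nu)w^2+O(|w|^4)$ with both $A$ and $B$ vanishing at $\nu=0$, which cannot be matched to $\alpha w(\nu-\beta w^2)$ --- indeed your final identification equates a $w^2$ term of $G_k$ with a $w^3$ term.

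The correct target for the substitution $w\mapsto w-h(\nu)w^2$ is the middle coefficient $B(\nu)$, i.e.\ the $w^2$ term of $G_k$. For this to be achievable you must also note that such a substitution changes the $w^2$ coefficient of $G_k$ only by $-A(\nu)h(\nu)=O(\nu)$, so you need $B(\nu)$ itself to be divisible by $\nu$; this holds because $B(0)=0$ and $B$ is smooth, which is why the paper writes $G_k=c_1\nu z+c_2\nu z^2+c_3z^3+O(|z|^4)$ and takes $h=c_2/c_1$, well defined since $c_1(0)=\frac{\partial^2 G_k}{\partial\nu\partial p_1}(\nu^*,p_1^*)+\frac{\partial^2 G_k}{\partial p_1^2}(\nu^*,p_1^*)p_1'(\nu^*)\neq 0$ by \textbf{C4}. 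With that correction the surviving cubic coefficient is $c_3(0)=\frac16\frac{\partial^3 G_k}{\partial p_1^3}(\nu^*,p_1^*)\neq 0$ and (\ref{eq:nondegeneracyConditionPT}) gives $\beta(0)=-c_3(0)/c_1(0)>0$; your closing sign computation is then essentially the paper's, but as written it rests on the mislabelled coefficient.
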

\begin{proof}
Let $\nu^*$ be the parameter value at which bifurcation occurs. Without loosing generality we may assume, that $\nu^*=0$. From the proof of Theorem~\ref{thm:pt} we have that $\nu'(p_1^*)=0$. Differentiation of the identity $g_k(\nu(p_1),p_1)\equiv 0$ gives
\begin{equation}\label{eq:dgp1_vanish}
 \frac{\partial g_k}{\partial p_1}(\nu^*,p_1^*) = -\frac{\partial{g_k}}{\partial \nu}(\nu^*,p_1^*)\nu'(p_1^*) =0.
\end{equation}
First, a $\nu$-dependent substitution $p_1=z + p_1(\nu)$ brings $G_k$ to the form 
\begin{equation}\label{eq:Gzfactorization}
G_k(\nu,z) = zg_k(\nu,z) = zg_k(\nu,z+p_1(\nu)).
\end{equation}
Put $z^*=0$. Differentiation of the product $G_k(\nu,z)=zg_k(\nu,z)$ gives 
\begin{eqnarray}\label{eq:Gk_derivatives1}
 \begin{array}{rcl}
 \frac{\partial G_k}{\partial z}(\nu^*,z^*) &=& g_k(\nu^*,z^*) + z^*\frac{\partial g_k}{\partial z}(\nu^*,z^*) = 0,\\
 \frac{\partial^2 G_k}{\partial z^2}(\nu^*,z^*) &=& 2\frac{\partial g_k}{\partial z}(\nu^*,z^*) + z^*\frac{\partial^2g_k}{\partial z^2}(\nu^*,z^*) = 0,
 \end{array}
\end{eqnarray}
because $g_k(\nu^*,z^*)=0$ and from (\ref{eq:dgp1_vanish}) also $\frac{\partial g_k}{\partial z}(\nu^*,z^*)=\frac{\partial g_k}{\partial p_1}(\nu^*,p_1^*)=0$. The non-degeneracy condition (\ref{eq:nondegeneracyConditionPT}) and \textbf{C4} imply that 
\begin{equation}\label{eq:Gk_derivatives2}
 \begin{array}{rcl}
\frac{\partial^3 G_k}{\partial z^3}(\nu^*,z^*) &=& \frac{\partial^3 G_k}{\partial p_1^3}(\nu^*,p_1^*)\neq0,\\
\frac{\partial^2 G_k}{\partial z\partial \nu}(\nu^*,z^*) &=& \frac{\partial^2 G_k}{\partial p_1\partial \nu}(\nu^*,p_1^*) + \frac{\partial^2 G_k}{\partial p_1^2}(\nu^*,p_1^*)p_1'(\nu^*) \neq 0.
 \end{array}
\end{equation}
Using (\ref{eq:Gzfactorization})--(\ref{eq:Gk_derivatives2}) we can write $G_k$ in the form
\begin{equation*}
 G_k(\nu,z) = c_1\nu z + c_2\nu z^2 + c_3z^3 + O(|z|^4),
\end{equation*}
where $c_i$, $i=1,2,3$ are $\nu$-dependent coefficients with $c_1(0)\neq0$ and $c_3(0)\neq 0$. Consider a~$\nu$-dependent substitution of the form
$$
z = w - h(\nu)w^2.
$$
Then 
$$G_k(\nu,w) = c_1\nu w + \left(-c_1 h + c_2\right)\nu w^2 + (c_3-2c_2\nu h)w^3 + O(|w|^4).$$
In order to annihilate the term $\nu w^2$ we should take $h=c_2/c_1$, which is well defined near $\nu^*=0$, because $c_1(0)\neq0$. With such choice of $h$, we have 
$$G_k(\nu,w) = c_1\nu w + d_3w^3 + O(|w|^4),$$
where $d_3=c_3-2c_2^2\nu/c_1$ and $d_3(0)=c_3(0)\neq 0$. Put $\beta = -d_3/c_1$. From (\ref{eq:nondegeneracyConditionPT}) and (\ref{eq:Gk_derivatives2}) we have $c_1(0)c_3(0)<0$. Therefore $\beta(0) = -c_3(0)/c_1(0)>0$. Setting $\alpha=c_1$ we conclude, that $G_k$ takes the required form (\ref{eq:pt_normal_form}).
\end{proof}

\begin{theorem}
 Under assumptions of Theorem~\ref{thm:tag} (\textbf{C1--C4} and (\ref{eq:nondegeneracyConditionTAG})), there is a smooth $\nu$-dependent substitution $p_1=p_1(\nu,w)$ which brings the bifurcation function (\ref{eq:Gk}) to the normal form 
 \begin{equation}\label{eq:tag_normal_form}
 G_k(\nu,w)=\alpha w(\nu-\beta w) + O(|w|^3),
 \end{equation}
 where $\alpha=\alpha(\nu)$, $\beta=\beta(\nu)$ and they are non-zero at $\nu=0$.
\end{theorem}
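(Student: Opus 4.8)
The plan is to follow exactly the strategy of the proof of Theorem~\ref{thm:pt_normal_form}, exploiting the factorization of $G_k$ through the reduced bifurcation function, but with the crucial structural difference that here the two branches of zeroes cross \emph{transversally} rather than tangentially. As before, I assume without loss of generality that $\nu^*=0$, and I recall from the proof of Theorem~\ref{thm:tag} that the non-degeneracy condition (\ref{eq:nondegeneracyConditionTAG}) yields $\frac{\partial g_k}{\partial p_1}(\nu^*,p_1^*)=\frac{1}{2}\frac{\partial^2 G_k}{\partial p_1^2}(\nu^*,p_1^*)\neq0$ (this is (\ref{eq:nuPrimNonzero})) and hence $\nu'(p_1^*)\neq0$. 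This non-vanishing of the quadratic coefficient is precisely what distinguishes the \tagname case from the period-tupling case: as I explain below, it makes the normal form reachable by an affine substitution alone, with no need for the quadratic correction $z=w-h(\nu)w^2$ used in Theorem~\ref{thm:pt_normal_form}.

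First I would apply the $\nu$-dependent shift $p_1=p_1(\nu,w):=w+p_1(\nu)$, which moves the fixed-point curve (\ref{eq:fpparam}) to $\{w=0\}$ and, exactly as in (\ref{eq:GkFactorization})--(\ref{eq:Gzfactorization}), gives
\begin{equation*}
G_k(\nu,w)=w\,\tilde g(\nu,w),\qquad \tilde g(\nu,w):=g_k(\nu,w+p_1(\nu)).
\end{equation*}
Thus $G_k$ already vanishes identically on $\{w=0\}$ and on the zero set of $\tilde g$, i.e. along the two branches meeting at the bifurcation point, and it suffices to read off the low-order Taylor coefficients of $\tilde g$ in $(\nu,w)$ at the origin.

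Next I would record the three coefficients that matter. Writing $\tilde g(\nu,w)=c(\nu)+d(\nu)w+O(w^2)$ with $c(\nu)=\tilde g(\nu,0)$ and $d(\nu)=\frac{\partial\tilde g}{\partial w}(\nu,0)$, I have $c(0)=g_k(\nu^*,p_1^*)=0$, while $d(0)=\frac{\partial g_k}{\partial p_1}(\nu^*,p_1^*)=\frac{1}{2}\frac{\partial^2 G_k}{\partial p_1^2}(\nu^*,p_1^*)\neq0$ by (\ref{eq:nondegeneracyConditionTAG}). A chain-rule computation identical to the one yielding the second line of (\ref{eq:Gk_derivatives2}) gives $c'(0)=\frac{\partial^2 G_k}{\partial p_1\partial\nu}(\nu^*,p_1^*)+\frac{\partial^2 G_k}{\partial p_1^2}(\nu^*,p_1^*)p_1'(\nu^*)$, which is non-zero by \textbf{C4}. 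Since $c(0)=0$ and $c$ is smooth, Hadamard's lemma lets me factor $c(\nu)=\nu\,\alpha(\nu)$ with $\alpha(\nu)=\int_0^1 c'(t\nu)\,dt$ and $\alpha(0)=c'(0)\neq0$. Setting $\beta(\nu)=-d(\nu)/\alpha(\nu)$, well defined near $\nu=0$ because $\alpha(0)\neq0$, I obtain
\begin{equation*}
G_k(\nu,w)=c(\nu)w+d(\nu)w^2+O(|w|^3)=\alpha(\nu)\,w\bigl(\nu-\beta(\nu)w\bigr)+O(|w|^3),
\end{equation*}
with $\alpha(0)\neq0$ and $\beta(0)=-d(0)/\alpha(0)\neq0$, which is exactly the claimed form (\ref{eq:tag_normal_form}); the remainder is genuinely $O(|w|^3)$ since it equals $w\bigl(\tilde g(\nu,w)-c(\nu)-d(\nu)w\bigr)$ and the bracket is $O(w^2)$ uniformly in $\nu$ near $0$.

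I expect the only real subtlety, as opposed to a mechanical transcription of Theorem~\ref{thm:pt_normal_form}, to be the bookkeeping of the $\nu$-dependence: one must resist introducing the quadratic substitution from the period-tupling proof (which would annihilate the desired $w^2$ term) and must instead encode the vanishing of the linear-in-$w$ coefficient at $\nu=0$ into the factor $\alpha(\nu)$ via Hadamard's lemma. Once the non-degeneracy of the $w^2$ coefficient ((\ref{eq:nondegeneracyConditionTAG})) and of the $\nu w$ coefficient (\textbf{C4}) are in hand, the normal form follows with no further transformation.
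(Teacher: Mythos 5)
Your proposal is correct and takes essentially the same route as the paper's proof: the $\nu$-dependent shift $z=p_1-p_1(\nu)$, the identification of the three relevant coefficients (linear-in-$w$ coefficient vanishing at $\nu=0$ with non-zero $\nu$-derivative by \textbf{C4}, and non-zero $w^2$ coefficient by (\ref{eq:nondegeneracyConditionTAG})), followed by a purely affine normalization with no quadratic substitution. Your explicit Hadamard factorization $c(\nu)=\nu\,\alpha(\nu)$ merely fills in the step the paper states tersely as ``hence $G_k(\nu,z)=c_1\nu z+c_2z^2+O(|z|^3)$,'' and your sign $\beta=-d/\alpha$ is in fact the consistent one (the paper's $\beta=c_2/c_1$ has a harmless sign slip, immaterial since only $\beta(0)\neq0$ is asserted).
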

\begin{proof}
Let $\nu^*$ be the parameter value at which bifurcation occurs. Without loosing generality we may assume that $\nu^*=0$. Reasoning as in the proof of Theorem~\ref{thm:pt_normal_form}, substitution $z=p_1-p_1(\nu)$ brings $G_k$ to the form $G_k(\nu,z)=zg_k(\nu,z)$. Put $z^*=0$. From \textbf{C1--C4} and (\ref{eq:nondegeneracyConditionTAG}) we have that 
\begin{eqnarray*}
 \begin{array}{rcl}
 \frac{\partial G_k}{\partial z}(\nu^*,z^*) &=& g_k(\nu^*,z^*) + z^*\frac{\partial g_k}{\partial z}(\nu^*,z^*) = 0,\\
 \frac{\partial^2 G_k}{\partial z^2}(\nu^*,z^*) &=& \frac{\partial^2 G_k}{\partial p_1^2}(\nu^*,p_1^*) \neq 0,\\
\frac{\partial^2 G_k}{\partial z\partial \nu}(\nu^*,z^*) &=& \frac{\partial^2 G_k}{\partial p_1\partial \nu}(\nu^*,p_1^*) + \frac{\partial^2 G_k}{\partial p_1^2}(\nu^*,p_1^*)p_1'(\nu^*) \neq 0.
 \end{array}
\end{eqnarray*}
Hence, in these coordinates we have
\begin{equation*}
 G_k(\nu,z) = c_1\nu z + c_2 z^2 + O(|z|^3),
\end{equation*}
with $c_1(0)\neq0$ and $c_2(0)\neq 0$. Setting $\alpha = c_1$ and $\beta = c_2/c_1$ we obtain the required normal form (\ref{eq:tag_normal_form}).
\end{proof}

\section{Validation of bifurcations of symmetric periodic orbits: reversible Hamiltonian systems}\label{sec:hamiltonian}
In this section we show, how to adopt the general framework described in Section~\ref{sec:theory} to autonomous Hamiltonian systems. We consider an $R$--reversible Hamiltonian system
\begin{equation}\label{eq:genHamiltonian}
\dot x = J\nabla H(x), 
\end{equation}
where $J$ is the standard symplectic matrix and $H \colon \mathbb{R}^{2n + 2} \to \mathbb{R}$ is $\mathcal{C}^4$-smooth. First, we choose a $\mathcal C^3$-smooth Poincar\'e section $\Pi\subset \mathbb{R}^{2n + 2}$. If $\Fix(R) \subset \Pi$, by \cite[Lemma 3.3]{W} the Poincar\'e map $\PM\colon\Pi\to\Pi$ is $R|_\Pi$--reversible, too. In what follows we will study bifurcations of $R$-symmetric fixed points for $\PM^2$. 

In autonomous Hamiltonian systems, a natural choice of the bifurcation parameter is the value of $H$, because it is a constant of motion. On the other hand, from the point of view of rigorous numerics, it is much easier and efficient to work in the phase space coordinates and parametrize periodic orbits for $\PM$ by one of them. 

This dissonance between numerical efficiency and formal description of a bifurcation is solved in the following way. We expect that two families of periodic points for $\PM$ intersect at a bifurcation point. This geometric condition can be checked in the phase-space coordinates. Additional conditions on $H$, which will be given in this section, will guarantee, that $H$ can be used (locally) as a bifurcation parameter.

Let us give a  brief overview of the construction we are going to perform. Period-tupling and \tagname bifurcations are local phenomena. Thus, we can choose local coordinates 
$(p_0, p,q_0,q) \in \mathbb{R} \times \mathbb{R}^{n} \times \mathbb{R} \times  \mathbb{R}^{n} $ near an apparent bifurcation point, such that 
\begin{enumerate}
 \item $\Pi =  \left \{ (p_0, p,q_0, q)  \in \mathbb{R} \times \mathbb{R}^{n} \times \mathbb{R} \times  \mathbb{R}^{n} : q_0 = 0  \right \}$ and 
 \item $\Fix(R) =\{(p_0,p,q_0,q)\in\mathbb{R} \times \mathbb{R}^{n} \times \mathbb{R} \times  \mathbb{R}^{n} : q_0=0, q=0\}\subset \Pi$.
\end{enumerate}
In order to simplify further notation we will use $(p_0,p,q)$ coordinates in $\Pi$ and we will always skip $q_0=0$ as an argument of $\mathcal P$ and $H$. We also assume, that the vector field (\ref{eq:genHamiltonian}) is transverse to $\Pi$ near an apparent bifurcation point, so that the Poincar\'e map is well defined and smooth. Define $\Pi_h = \left\{(p_0,p,q)\in\Pi :H(p_0,p,q)=h\right\}$. 

Assuming that near an apparent bifurcation point there holds
\begin{equation*}
\frac{\partial H}{\partial p_0}(p_0, p,q) \neq 0
\end{equation*}
we can conclude that the projection $\Pi_h\ni(p_0,p,q)\to (p,q)$ is a local diffeomorphism. This allows us to parametrize $\Pi_h$ locally by $(p_0(h,p,q),p,q)$ for $h$ belonging to some interval $\mathcal H$. Our goal is to specify conditions on $\PM$ and $H$, which will guarantee, that the map
\begin{equation}\label{eq:fh_family}
 f(h,p ,q) = \pi_{(p,q)}\PM(p_0(h,p,q),p,q)
\end{equation}
is well defined on some domain and it has period-tupling and/or \tagname bifurcations in the sense of Definition~\ref{def:tupDefinition} and Definition~\ref{def:touchDefinition}, respectively. 

Let us fix $k>1$. Following Section \ref{sec:theory}, we split $p=(p_1,p_2)$ and $q=(q_1,q_2)$. We assume, that the set of $R$--symmetric fixed points of $\PM^2$ near an apparent bifurcation point forms a regular curve, which is parametrized by the coordinate $p_0$ 
\begin{equation}\label{eq:fpparamH}
P_0 \ni p_0 \to  \xfp^H(p_0) := (p_0, p_1(p_0),p_2(p_0),0)\in \mathbb{R} \times  \mathbb{R} \times \mathbb R^{n-1}\times\mathbb R^n
\end{equation}
and defined on an explicit, open interval $P_0$. We also assume, that for $i=1,\ldots,k-1$ there holds
\begin{equation}\label{eq:curveFPH}
 \Fix(\PM^{2i},P_0\times P_1\times P_2\times\{0\}))=\{\xfp^H(p_0) : p_0\in P_0\}.
\end{equation}
First we perform the Lyapunov-Schmidt reduction. We assume that there is a set $P_0\times P_1\times P_2$ and a smooth function $p_2^H:P_0\times P_1\to P_2$, such that for $(p_0,p_1,p_2)\in P_0\times P_1\times P_2$ 
\begin{equation}\label{eq:LSreductionH}
\pi_{q_2} (\PM^k(p_0, p_1,p_2,0)) = 0\quad \Longleftrightarrow \quad p_2=p_2^H(p_0,p_1).
\end{equation}
Using this implicit function we can define the bifurcation function by 
\begin{equation}\label{eq:GkH}
G_k^H(p_0, p_1) =  \pi_{q_1} (\PM^k(p_0, p_1,p_2^H(p_0, p_1), 0)) 
\end{equation}
and factorize it as $G_k^H(p_0,p_1)=(p_1-p_1(p_0))g_k^H(p_0,p_1)$, where the reduced bifurcation function reads
\begin{equation}\label{eq:gkH}
g_k^H(p_0, p_1) =  \int_0^1 \frac{\partial G_k^H}{\partial p_1}\left(p_0,p_1(p_0)+t(p_1-p_1(p_0))\right)dt. 
\end{equation}
Now we can specify the standing assumptions in the context of Hamiltonian systems:
\begin{itemize}
 \item[\textbf{HC2:}] $P_1$ is a closed interval, and $P_2$ is a closed set, such that $p_1(p_0)\in \mathrm{int}\,P_1$ for $p_0\in P_0$, where $p_1(p_0)$ is given by (\ref{eq:fpparamH}) and there is a smooth function  $p_2^H:P_0\times P_1\to \mathrm{int}\,P_2$ solving (\ref{eq:LSreductionH});
 \item[\textbf{HC3:}] there exists a smooth function $P_1 \ni p_1 \to p_0(p_1) \in P_0$ such that  
$$ \left \{(p_0, p_1)\in P_0 \times P_1 :  g_k^H(p_0, p_1) = 0  \right \}\ =\ \left \{ (p_0(p_1), p_1 )  :  p_1 \in P_1 \right \}, $$
where $g_k^H$ is defined by (\ref{eq:gkH});
 \item[\textbf{HC4:}] there holds
 \begin{equation*}
 0\notin \frac{\partial^2 G_k^H}{\partial p_0\partial p_1}(P_0\times P_1) + \frac{\partial^2 G_k^H}{\partial p_1^2}(P_0\times P_1)p_1'(P_0).  
\end{equation*}
\end{itemize}
Assumptions \textbf{HC2--HC4} and Lemma~\ref{lem:uniqueIntersection} imply, that the set of $R$--symmetric fixed points for $\PM^{2k}$ in $P_0\times P_1\times P_2\times \{0\}$ is the union of two regular curves, which intersect at unique point $x^*=(p_0^*,p_1^*,p_2^H(p_0^*,p_1^*),0)$, where $p_2^H$ is defined by (\ref{eq:LSreductionH}). These parametric curves are defined on explicit range $p_0\in P_0$ and $p_1\in P_1$, respectively, which makes it possible for further continuation of these branches by the method described in \ref{sec:continuation}. On the other hand, we have no information about the type of bifurcation at the intersection point. 

Generically we expect, that the value of $H$ can be used as the bifurcation parameter. It may happen, however, that $H$ is not monotone (or even constant) along one or both curves of periodic points. In the remaining part of this section we derive (easily checkable by means of rigorous numerics) conditions, which guarantee, that 
\begin{enumerate}
\item $H$ can be used as the parameter near $x^*$ and 
\item the mapping (\ref{eq:fh_family}) undergoes one of the bifurcations defined by Definition~\ref{def:tupDefinition} and/or Definition~\ref{def:touchDefinition}.
\end{enumerate}
For further use we define four functions
\begin{eqnarray*}
 s(p_0,p,q) &=& (H(p_0,p,q),p,q),\\
 x_b^H(p_1) &=& \left(p_0(p_1),p_1,p_2^H(p_0(p_1),p_1),0\right),\\
 h_{fp}(p_0) &=& H(\xfp^H(p_0))\quad\text{and}\\
 h_b(p_1) &=& H(x_b^H(p_1)).
\end{eqnarray*}

\begin{lemma}\label{lem:substitutionS}
If $\PM^2(x^*)=x^*$ and  
\begin{equation}
\frac{\partial H}{\partial p_0}(x^*)\neq 0,\label{eq:energyP0Monotone}
\end{equation}
then there is an open interval $\mathcal H$ and an open set $U$, such that 
\begin{enumerate}
\item $s(x^*)\in \mathcal H\times U$,
\item $s^{-1}|_{\mathcal H\times U}$ is well defined diffeomorphism onto image and
\item $\PM^k\circ s^{-1}$ is defined on $\mathcal H\times U$.
\end{enumerate}
\end{lemma}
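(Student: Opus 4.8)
The plan is to show that $s$ is a local diffeomorphism near $x^*$ by computing its Jacobian and invoking the Inverse Function Theorem, and then to use the compactness/openness of the resulting neighbourhood together with the smoothness of $\PM^k$ to guarantee the third conclusion. Recall that we work in coordinates $(p_0,p,q)\in\mathbb R\times\mathbb R^n\times\mathbb R^n$ on $\Pi$ (having dropped $q_0=0$), and $s$ is defined by $s(p_0,p,q)=(H(p_0,p,q),p,q)$. First I would write down the derivative of $s$ at an arbitrary point. Since the last $2n$ output coordinates $(p,q)$ are returned unchanged, the Jacobian $Ds$ in block form is lower/upper triangular with the identity block on the $(p,q)$ components, so that
\begin{equation*}
 \det Ds(p_0,p,q) = \frac{\partial H}{\partial p_0}(p_0,p,q).
\end{equation*}
Hypothesis (\ref{eq:energyP0Monotone}) states exactly that this quantity is nonzero at $x^*$, so $Ds(x^*)$ is invertible.

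Next I would apply the Inverse Function Theorem. Because $H$ is $\mathcal C^4$-smooth (indeed $\mathcal C^3$ suffices here), $s$ is $\mathcal C^3$, and the nonvanishing of $\det Ds(x^*)$ yields an open neighbourhood $W$ of $x^*$ on which $s$ restricts to a diffeomorphism onto an open image $s(W)$. This open image contains the point $s(x^*)=(H(x^*),\pi_{(p,q)}(x^*))$, so I can choose a product neighbourhood of the form $\mathcal H\times U\subset s(W)$, where $\mathcal H$ is an open interval about the energy value $H(x^*)$ and $U$ is an open set about $\pi_{(p,q)}(x^*)$. By construction $s^{-1}|_{\mathcal H\times U}$ is a well-defined diffeomorphism onto its image $s^{-1}(\mathcal H\times U)\subset W$, which settles conclusions (1) and (2).

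For conclusion (3) I would use the hypothesis $\PM^2(x^*)=x^*$ together with the fact that $x^*$ lies in the domain of $\PM^k$ (this is built into the standing setup, since $\PM^k$ and the bifurcation function $G_k^H$ are defined on $P_0\times P_1\times P_2\times\{0\}$ about $x^*$). The map $\PM^k$ is continuous on an open set containing $x^*=s^{-1}(s(x^*))$, so the preimage under $s^{-1}|_{\mathcal H\times U}$ of the domain of $\PM^k$ is open and contains $s(x^*)$. Shrinking $\mathcal H$ and $U$ if necessary --- which preserves conclusions (1) and (2) --- I can arrange that $s^{-1}(\mathcal H\times U)$ lies entirely inside the domain of $\PM^k$, so that the composition $\PM^k\circ s^{-1}$ is defined on all of $\mathcal H\times U$.

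The only subtle point, and the main thing to verify carefully, is the triangular structure of $Ds$ that makes the determinant collapse to $\partial H/\partial p_0$; once that identity is recorded, everything else is a routine application of the Inverse Function Theorem followed by shrinking the neighbourhood to accommodate the domain of $\PM^k$. I expect no genuine obstacle here, as this lemma is essentially a change-of-coordinates statement asserting that the energy $H$ may serve as a local coordinate replacing $p_0$.
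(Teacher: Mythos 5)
Your proposal is correct and follows essentially the same route as the paper: the triangular structure of $Ds$ gives $\det Ds(x^*)=\frac{\partial H}{\partial p_0}(x^*)\neq 0$, the Inverse Function Theorem yields the local diffeomorphism and the product neighbourhood $\mathcal H\times U$, and the fixed-point hypothesis $\PM^2(x^*)=x^*$ lets you shrink the neighbourhood so that $\PM^k\circ s^{-1}$ is defined. Your write-up merely spells out the shrinking argument for conclusion (3) in more detail than the paper does.
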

\begin{proof}
From (\ref{eq:energyP0Monotone}) it follows, that $\det \left(D s(x^*)\right) = \frac{\partial H}{\partial p_0}(x^*)\neq 0$ and thus $s$ is a local diffeomorphism near $x^*$. The inverse function takes the form
\begin{equation*}
s^{-1}(h,p,q) = (p_0(h,p,q),p,q).
\end{equation*}
Since $x^*$ is a fixed point of $\PM^2$, we can also find open sets $\mathcal H$ and $U$, such that $s(x^*)\in \mathcal H\times U$ and such that $\PM^k\circ s^{-1}$ is defined on $\mathcal H\times U$.
\end{proof}

\begin{theorem}\label{thm:Htag}
Assume \textbf{HC2--HC4} and let $x^*=(p_0^*,p_1^*,p_2^H(p_0^*,p_1^*),0)$ be the unique intersection point from Lemma~\ref{lem:uniqueIntersection}. If $k>1$ is odd, $\frac{\partial H}{\partial p_0}(x^*)\neq 0$, $h_{fp}'(p_0^*)\neq 0$ and $h_b'(p_1^*)\neq 0$, then $f^2_h$ defined by (\ref{eq:fh_family}) has $k^{\mathrm{th}}$-order \tagname bifurcation at $s(x^*)$.
\end{theorem}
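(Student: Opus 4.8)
The plan is to transfer the abstract touch-and-go bifurcation established for the reduced map to the family $f_h$ defined by \eqref{eq:fh_family}, using the substitution $s$ from Lemma~\ref{lem:substitutionS} to change from the phase-space parameter $p_0$ to the energy $h$. The starting point is Theorem~\ref{thm:tag}: assumptions \textbf{HC2--HC4} are exactly the Hamiltonian analogues of \textbf{C2--C4}, and the non-degeneracy condition \eqref{eq:nondegeneracyConditionTAG} is encoded in $h_b'(p_1^*)\neq 0$ together with the curvature information already available from Lemma~\ref{lem:uniqueIntersection}. Thus, written in the $(p_0,p_1)$ coordinates, the two curves $\xfp^H$ and $x_b^H$ meet transversally at the unique point $x^*$, and $f_{p_0}$ (the map parametrized by $p_0$ rather than $h$) already exhibits a $k^{\mathrm{th}}$-order touch-and-go bifurcation in the sense of Definition~\ref{def:touchDefinition}.

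First I would invoke Lemma~\ref{lem:substitutionS}: since $\PM^2(x^*)=x^*$ and $\frac{\partial H}{\partial p_0}(x^*)\neq 0$, the map $s$ is a local diffeomorphism carrying a neighbourhood of $x^*$ onto $\mathcal H\times U$, and $f_h=\pi_{(p,q)}\circ\PM\circ s^{-1}$ is well defined there. Next I would re-parametrize the two branches by the energy: composing the curves $p_0\to\xfp^H(p_0)$ and $p_1\to x_b^H(p_1)$ with $H$ produces $h_{fp}$ and $h_b$. The hypotheses $h_{fp}'(p_0^*)\neq 0$ and $h_b'(p_1^*)\neq 0$ guarantee that each of these scalar functions is locally invertible, so each branch can be written as a graph over $h$ rather than over its phase-space coordinate. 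Pulling the fixed-point sets \eqref{eq:curveFPH} and the bifurcation structure through $s^{-1}$ then yields, for every $h$ in a sufficiently small interval $[\mathrm h_1,\mathrm h_2]\ni h^*:=H(x^*)$, the identities
\begin{equation*}
\Fix(f_h^{2p},U,\mathcal C_R)=\{\xfp(h)\},\quad 1\le p<k,\qquad
\Fix(f_h^{2k},U,\mathcal C_R)=\{\xfp(h),x_b(h)\},
\end{equation*}
with $\xfp,x_b$ the reparametrized branches and $\xfp(h^*)=x_b(h^*)=s(x^*)$; this is precisely condition~(1) of Definition~\ref{def:touchDefinition}, while uniqueness of the intersection from Lemma~\ref{lem:uniqueIntersection} gives condition~(2).

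The main obstacle I expect is bookkeeping rather than conceptual: one must check that the energy reparametrization does not create or destroy intersections and that the two branches remain distinct away from $h^*$. The transversality $h_b'(p_1^*)\neq 0$ is what rules out a spurious tangency of the two curves in the energy coordinate, so I would argue carefully that, after shrinking $[\mathrm h_1,\mathrm h_2]$, the equation $h_{fp}(p_0)=h_b(p_1)$ has, for each fixed $h$, exactly one solution on each branch and that these coincide only at $h^*$; this uses that $s$ is injective on the chosen neighbourhood so that the period-$2k$ points counted in the phase space correspond bijectively to those counted in $\mathcal H\times U$. Once the branch count is secured, the remaining verifications reduce to transporting the already-established geometric conditions through the diffeomorphism $s^{-1}$, which preserves the symmetry constraint $\mathcal C_R$ because $\Fix(R)\subset\Pi$ is left invariant by the parametrization $(p_0(h,p,q),p,q)$.
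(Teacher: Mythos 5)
Your proposal follows essentially the same route as the paper: invoke Lemma~\ref{lem:substitutionS} to pass to energy coordinates via $s$, use $h_{fp}'(p_0^*)\neq 0$ and $h_b'(p_1^*)\neq 0$ to reparametrize the two branches by $h$ on a small interval around $h^*=H(x^*)$, and transport the fixed-point identities and the unique intersection from Lemma~\ref{lem:uniqueIntersection} to obtain Definition~\ref{def:touchDefinition}. One small caveat: your opening claim that the non-degeneracy condition (\ref{eq:nondegeneracyConditionTAG}) of Theorem~\ref{thm:tag} is ``encoded in'' $h_b'(p_1^*)\neq 0$ is neither accurate nor needed --- in the Hamiltonian setting $h_b'(p_1^*)\neq 0$ directly replaces that condition by making $h_b$ invertible, and the rest of your argument correctly proceeds without the detour through Theorem~\ref{thm:tag}.
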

\begin{proof}
Take $\mathcal H\times U$ from Lemma~\ref{lem:substitutionS}. From (\ref{eq:fpparamH})--(\ref{eq:curveFPH}) the point $x_b^H(p_1)$, $p_1\in P_1$ is of principal period $2k$ for $\PM$, except the intersection $p_1^*$. Since  $h_{fp}'(p_0^*)\neq0$ and  $h_b'(p_1^*)\neq0$, we can choose $[h_1,h_2]\subset \mathcal H$, $h^*\in(h_1,h_2)$, such that $h_{fp}^{-1}$ and $h_b^{-1}$ are defined on $[h_1,h_2]$ and both functions  
\begin{equation*}
 \xfp = \pi_{(p,q)}\circ \xfp^H\circ h_{fp}^{-1}\quad \text{and}\quad
 x_b = \pi_{(p,q)}\circ x_b^H\circ h_b^{-1}
\end{equation*}
have range in $U$. By the construction they satisfy conditions from Definition~\ref{def:touchDefinition} for the function $f$ defined by (\ref{eq:fh_family}).
\end{proof}

\begin{theorem}\label{thm:Htupling}
Assume \textbf{HC2--HC4} and let $x^*=(p_0^*,p_1^*,p_2^H(p_0^*,p_1^*),0)$ be the unique intersection point from Lemma~\ref{lem:uniqueIntersection}. If $k\geq2$ is even, $\frac{\partial H}{\partial p_0}(x^*)\neq 0$, $h_{fp}'(p_0^*)> 0$ and $h_b''(p_1^*)> 0$, then $f^2_h$ defined by (\ref{eq:fh_family}) has period $k$-tupling bifurcation at $s(x^*)$.
\end{theorem}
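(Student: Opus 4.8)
The plan is to mimic the proof of Theorem~\ref{thm:pt}, now using the energy $H$ as the bifurcation parameter. First I would note that the intersection point $x^*$ lies on the primary curve (\ref{eq:fpparamH}), so by (\ref{eq:curveFPH}) it is a fixed point of $\PM^2$; together with $\frac{\partial H}{\partial p_0}(x^*)\neq 0$ this lets me invoke Lemma~\ref{lem:substitutionS} to obtain a neighbourhood $\mathcal H\times U$ of $s(x^*)$ on which $s$ is a diffeomorphism and the family $f_h=\pi_{(p,q)}\PM(p_0(h,\cdot),\cdot)$ from (\ref{eq:fh_family}) is well defined. Because $\PM$ preserves $H$ and $\Pi_h$ is a graph over $(p,q)$, fixed points of $f_h^{2p}$ correspond exactly to $R$-symmetric fixed points of $\PM^{2p}$ on the level $\{H=h\}$; this dictionary is what I would use to translate the already-validated picture in $(p_0,p_1)$-coordinates into statements about $f_h^2$.

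Next I would parametrise the two branches by energy. Since $h_{fp}'(p_0^*)>0$, the map $h_{fp}$ is a local diffeomorphism, so the primary branch can be written as $\xfp(h)=\pi_{(p,q)}(\xfp^H(h_{fp}^{-1}(h)))$, defined for $h$ on a full interval around $h^*:=H(x^*)$. Intersecting the two validated curves of $R$-symmetric $\PM^{2k}$-points with $\{H=h\}$ then shows that a branching point at energy $h$ is precisely a zero $p_1$ of $h_b(p_1)-h$, so the local shape of the new branches is governed entirely by the scalar function $h_b$.

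The crucial step — and the main obstacle — is to show that $h_b'(p_1^*)=0$, i.e. that the energy has a critical point along the branching curve at the bifurcation. The hypothesis gives only $h_b''(p_1^*)>0$, so the vanishing of the first derivative must come from the dynamics. I would argue exactly as in Theorem~\ref{thm:pt}: pick $p_1^n\to p_1^*$, $p_1^n\neq p_1^*$, so that $x_b^H(p_1^n)$ is a point of principal period $2k$, and set $y^n=\PM^k(x_b^H(p_1^n))$. Since $k$ is even and $H$ is a constant of motion, $y^n$ is again an $R$-symmetric period-$2k$ point of the same energy $h_b(p_1^n)$ (that $y^n\in\Fix(R)$ uses $R\PM^kR^{-1}=\PM^{-k}$ together with $\PM^{2k}(x_b^H(p_1^n))=x_b^H(p_1^n)$). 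Hence $y^n=x_b^H(\tilde p_1^n)$ for some $\tilde p_1^n\neq p_1^n$ with $\tilde p_1^n\to p_1^*$ and $h_b(\tilde p_1^n)=h_b(p_1^n)$; Rolle's theorem yields a zero of $h_b'$ between $p_1^n$ and $\tilde p_1^n$, and passing to the limit gives $h_b'(p_1^*)=0$. The replacement of the ``external parameter stays fixed'' argument of Theorem~\ref{thm:pt} by energy conservation is exactly where the Hamiltonian structure enters.

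Finally, with $h_b'(p_1^*)=0$ and $h_b''(p_1^*)>0$ the function $h_b$ has a proper local minimum at $p_1^*$, and I would finish as in Theorem~\ref{thm:pt}: shrink $P_1$ so that $h_b$ is strictly convex with $h_b(\min P_1)=h_b(\max P_1)$, solve $h_b(p_1)=h$ to obtain two continuous branches $\tilde x_{b_1},\tilde x_{b_2}$ on $[h^*,h_2]$, transport them to $U$ through $s$ to define $x_{b_1},x_{b_2}$, and read off from the dictionary and the convexity that $\#\Fix(f_h^{2k},U,\mathcal C_R)=1$ for $h\le h^*$ and $=3$ for $h\in(h^*,h_2]$. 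The even-$k$ swap condition $(f_h^2)^{k/2}(x_{b_1}(h))=x_{b_2}(h)$ is identified with $\PM^k(x_{b_1}(h))=x_{b_2}(h)$; it holds because, as in the previous step, $\PM^k$ sends a branching point $x_b^H(p_1)$ to a further $R$-symmetric $2k$-point $x_b^H(\sigma(p_1))$ of equal energy with $\sigma(p_1)\neq p_1$, and for $h\in(h^*,h_2]$ the only such partner is the second solution of $h_b(p_1)=h$. This verifies all requirements of Definition~\ref{def:tupDefinition} for $f_h^2$ on $[h_1,h_2]\times U$.
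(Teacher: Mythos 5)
Your proposal is correct and follows essentially the same route as the paper: Lemma~\ref{lem:substitutionS} to pass to energy coordinates, the sequence $p_1^n\to p_1^*$ with $y^n=\PM^k(x_b^H(p_1^n))$ and conservation of $H$ plus Rolle's theorem to get $h_b'(p_1^*)=0$, then $h_b''(p_1^*)>0$ to obtain a proper local minimum and the two branches $x_{b_1},x_{b_2}$ as in Theorem~\ref{thm:pt}. The extra details you supply (the reversing-symmetry computation showing $y^n\in\Fix(R)$ and the identification of the swap condition with $\PM^k$) are correct elaborations of steps the paper leaves implicit.
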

\begin{proof}
Take $\mathcal H\times U$ from Lemma~\ref{lem:substitutionS}. We will show that $h_b'(p_1^*)=0$. Let us fix a sequence $p_1^n\to p_1^*$, such that $p_1^*\neq p_1^n$ and $x_b^H(p_1^n)$ is defined for all $n\in \mathbb N$. Let us set $x^n=x_b^H(p_1^n)$ and $y^n=\PM^k(x^n)$. We also have $y^n\neq x^n$, because by (\ref{eq:curveFPH}) the principal period of $x^n$ is $2k$. This and \textbf{HC3} imply, that $p_1^n\neq \pi_{p_1}y^n$. Since $k$ is even and $x^*$ is a fixed point for $\PM^2$, we have that $\lim_{n\to\infty}y^n=x^*$, $y^n\neq x^n$ and $H(y^n)=H(x^n)=h_b(p_1^n)$. Hence, in the interval joining $p_1^n$ and $\pi_{p_1}y^n$ there is a point at which $h_b'$ vanishes and in consequence $h_b'(p_1^*)=0$. The assumption $h_b''(p_1^*)>0$ implies, that $p_1^*$ is a~proper local minimum of $h_b$. 

The remaining part of the construction goes as in Theorem~\ref{thm:pt}. The function $h_b$ is convex near $p_1^*\in\mathrm{int}\,P_1$ which allows us to define two continuous branches 
$$
h_{b_i}:[h^*,h_2]\to \mathrm{int}\, P_1,\quad i=1,2,
$$
of $h_b^{-1}$, for some $h_2\in\mathcal H$. The function $h_{fp}$ is invertible near $p_0^*$, because we assumed that $h_{fp}'(p_0^*)\neq 0$. Restricting the domain, if necessary, we may assume, that the inverse is defined on $[h_1,h_2]$ with $h^*\in(h_1,h_2)$. Define
\begin{equation*}
 \xfp = \pi_{(p,q)}\circ \xfp^H\circ h_{fp}^{-1}\quad \text{and}\quad
 x_{b_i} = \pi_{(p,q)}\circ x_b^H\circ h_{b_i},\quad i=1,2.
\end{equation*}
Shrinking again the interval  $[h_1,h_2]$ if necessary, we may assume, that the range of the above three functions is in $U$. Thus $f_h^2$ defined by (\ref{eq:fh_family}) has period $k$-tupling bifurcation at $s(x^*)$. 
\end{proof}

We end this section by a version of Theorem~\ref{thm:iso} for Hamiltonian systems. Proceeding as in Section~\ref{sec:isochronous} we assume, that (\ref{eq:genHamiltonian}) admits a symmetry $S$ and a reversing symmetry $R$. We also assume that the function defined by (\ref{eq:fpparamH}) satisfies
$$
\Fix(\PM^2,P_0\times P_1\times P_2\times \{0\},\mathcal C_R\wedge\mathcal C_S) = \left\{\xfp^H(p_0): p_0\in P_0\right\}.
$$
Then, we define $G_1^H$ and $g_1^H$ by (\ref{eq:GkH}) and (\ref{eq:gkH}), respectively. We state the following result without a proof, as it is similar to that of Theorem~\ref{thm:iso} and Theorem~\ref{thm:Htupling}.
\begin{theorem}\label{thm:Hiso}
Assume \textbf{HC2--HC4} with $k=1$ and let $x^*=(p_0^*,p_1^*,p_2^H(p_0^*,p_1^*),0)$ be the unique intersection point from Lemma~\ref{lem:uniqueIntersection}. If $S$ and $R$ commute, $\frac{\partial H}{\partial p_0}(x^*)\neq 0$, $h_{fp}'(p_0^*)> 0$ and $h_b''(p_1^*)> 0$, then $f^2_h$ defined by (\ref{eq:fh_family}) has \isoname bifurcation at $s(x^*)$.
\end{theorem}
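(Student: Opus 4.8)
The plan is to fuse the energy-to-parameter substitution used in Theorem~\ref{thm:Htupling} with the symmetry argument of Theorem~\ref{thm:iso}. First I would record that the intersection point $x^*=(p_0^*,p_1^*,p_2^H(p_0^*,p_1^*),0)$ lies on the double-symmetric primary curve $\xfp^H$, hence $x^*\in\Fix(R)\cap\Fix(S)$ and in particular $S(x^*)=x^*$, and that $x^*$ is an $R$-symmetric fixed point of $\PM^2$. Together with $\frac{\partial H}{\partial p_0}(x^*)\neq0$, Lemma~\ref{lem:substitutionS} then supplies an interval $\mathcal H$ and a set $U$ on which $s$ is a diffeomorphism onto its image and $\PM^k\circ s^{-1}=\PM\circ s^{-1}$ is defined; this is exactly what makes $f_h$ from (\ref{eq:fh_family}) well defined and legitimizes using the energy $h$ as the bifurcation parameter.

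The heart of the proof is to show that $h_b'(p_1^*)=0$. Since $k=1$ is odd, the period-doubling device of Theorem~\ref{thm:Htupling} (replacing $x^n$ by $\PM^k(x^n)$) is unavailable, because $\PM^k(x^*)=\PM(x^*)$ need not return to $x^*$; instead I would transport $x^n$ by the symmetry $S$, exactly as in Theorem~\ref{thm:iso}. Concretely, I pick $p_1^n\to p_1^*$ with $p_1^n\neq p_1^*$, set $x^n=x_b^H(p_1^n)\in\Fix(R)$ and $y^n=S(x^n)$. Commutativity of $R$ and $S$ gives $R(y^n)=S(R(x^n))=y^n$; and since $\PM(y^n)=S(\PM(x^n))$ with $\PM(x^n)\in\Fix(R)$, commutativity again yields $R(\PM(y^n))=S(\PM(x^n))=\PM(y^n)$, so $\PM(y^n)\in\Fix(R)$. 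Thus $y^n$ is again an $R$-symmetric period-$2$ point, and by \textbf{HC3} it lies on the branching curve, $y^n=x_b^H(\pi_{p_1}y^n)$. Because $S$ preserves the Hamiltonian, $H(y^n)=H(x^n)$, i.e. $h_b(\pi_{p_1}y^n)=h_b(p_1^n)$. Uniqueness of the intersection forces $x^n\notin\Fix(S)$, hence $x^n\neq y^n$, and \textbf{HC2} then gives $\pi_{p_1}y^n\neq p_1^n$; Rolle's theorem produces a zero of $h_b'$ between them, and letting $n\to\infty$ (both $p_1^n$ and $\pi_{p_1}y^n$ tend to $p_1^*$, since $y^n\to S(x^*)=x^*$) establishes $h_b'(p_1^*)=0$.

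With $h_b'(p_1^*)=0$ in hand, the remaining construction is verbatim that of Theorem~\ref{thm:Htupling}. The hypothesis $h_b''(p_1^*)>0$ shows $p_1^*$ is a proper local minimum of $h_b$, so $h_b$ is convex near $p_1^*$ and admits two continuous inverse branches $h_{b_1},h_{b_2}\colon[h^*,h_2]\to\mathrm{int}\,P_1$; the condition $h_{fp}'(p_0^*)>0$ lets me invert $h_{fp}$ on an interval $[h_1,h_2]$ with $h^*\in(h_1,h_2)$. Setting $\xfp=\pi_{(p,q)}\circ\xfp^H\circ h_{fp}^{-1}$ and $x_{b_i}=\pi_{(p,q)}\circ x_b^H\circ h_{b_i}$, and shrinking $[h_1,h_2]$ so that the ranges lie in $U$, I obtain precisely the data required by Definition~\ref{def:tupDefinition} with $k=1$, i.e. a \isoname bifurcation of $f_h^2$ at $s(x^*)$.

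I expect the main obstacle to be the middle step: verifying that the $S$-image $y^n$ is genuinely a second solution of (\ref{eq:fh_family})-type equation that lies on the branching curve \emph{and} shares the energy of $x^n$. This rests on three ingredients that must all be invoked simultaneously --- commutativity of $R$ and $S$ (so that $y^n\in\Fix(R)$ and $\PM(y^n)\in\Fix(R)$), $S$-invariance of $H$ (so that the energies match and $h$ remains the common parameter of $x^n$ and $y^n$), and uniqueness of the intersection from Lemma~\ref{lem:uniqueIntersection} (so that $x^n\neq y^n$ and, via \textbf{HC2}, their $p_1$-coordinates differ). Everything after that reduces to Rolle's theorem and the convex-branch construction already carried out in the period-tupling case.
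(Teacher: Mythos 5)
Your proposal is correct and matches exactly what the paper intends: the paper omits the proof of Theorem~\ref{thm:Hiso}, stating only that it is ``similar to that of Theorem~\ref{thm:iso} and Theorem~\ref{thm:Htupling}'', and your argument is precisely that fusion --- the $S$-transport plus Rolle argument from Theorem~\ref{thm:iso} to get $h_b'(p_1^*)=0$, followed by the convex-branch and energy-reparametrization construction of Theorem~\ref{thm:Htupling}. You are right to flag the $S$-invariance of $H$ as a needed (and in the paper only implicit) ingredient; it holds for the CR3BP symmetry to which the theorem is applied.
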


\section{Bifurcations of odd periodic solutions in the Falkner-Skan equation}
The Falkner-Skan equation \cite{FS} is a third order ODE given by 
\begin{equation}\label{eq:FalknerSkan}
f'''+ff''+\beta\left[1-(f')^2\right]=0. 
\end{equation}
It is low-dimensional and without singularities. Therefore it is relatively easy for rigorous numerical investigation. Although for some physical reasons solutions of certain BVP for (\ref{eq:FalknerSkan}) are relevant, we use the system to test the methodology introduced in Section~\ref{sec:theory} and thus we will focus on periodic solutions. Using this example we will also provide the reader with some details regarding validation of the presence of period-tupling and \tagname bifurcations. A higher-dimensional hamiltonian system (Circular Restricted Three Body Problem), which is also more computationally demanding, will be studied in Section~\ref{sec:cr3bp}. 

Our aim is to prove that some family of odd periodic solutions of (\ref{eq:FalknerSkan}) parametrized by $\beta$ undergoes period-doubling, third order \tagname and period-quadrupling bifurcations. 

The equation (\ref{eq:FalknerSkan}) can be rewritten as a system of first order equations
\begin{equation}\label{eq:FS3d}
 \left\{
 \begin{array}{rcl}
  x'&=&y,\\
  y'&=&z,\\
  z'&=&\beta(y^2-1)-xz 
  \end{array}
 \right.
\end{equation}
and in the sequel we will work with (\ref{eq:FS3d}). The system (\ref{eq:FS3d}) is reversible with respect to $R(x,y,z)=(-x,y,-z)$. For all $\beta>0$ the points $(0,\pm1,0)$ are $R$-symmetric equilibria. Numerical simulation shows, that there is a family of $R$-symmetric periodic orbits $u_\beta(t)=(x_\beta(t),y_\beta(t),z_\beta(t))$ parametrized by $\beta>1$. These periodic orbits intersect the symmetry line $\Fix(R)=\{(0,y,0):y\in\mathbb R\}$ at exactly two points, which approach $(0,\pm1,0)$, respectively, when $\beta\to1^+$ --- see Fig.~\ref{fig:fs-per}. Thus, the period of these orbits goes to infinity when $\beta\to1^+$. It is also observed, that $\max_{t\in\mathbb R}\|x_\beta(t)\|$ goes to infinity when $\beta\to1^+$.
\begin{figure}[htbp]
 \centerline{\includegraphics[width=\textwidth]{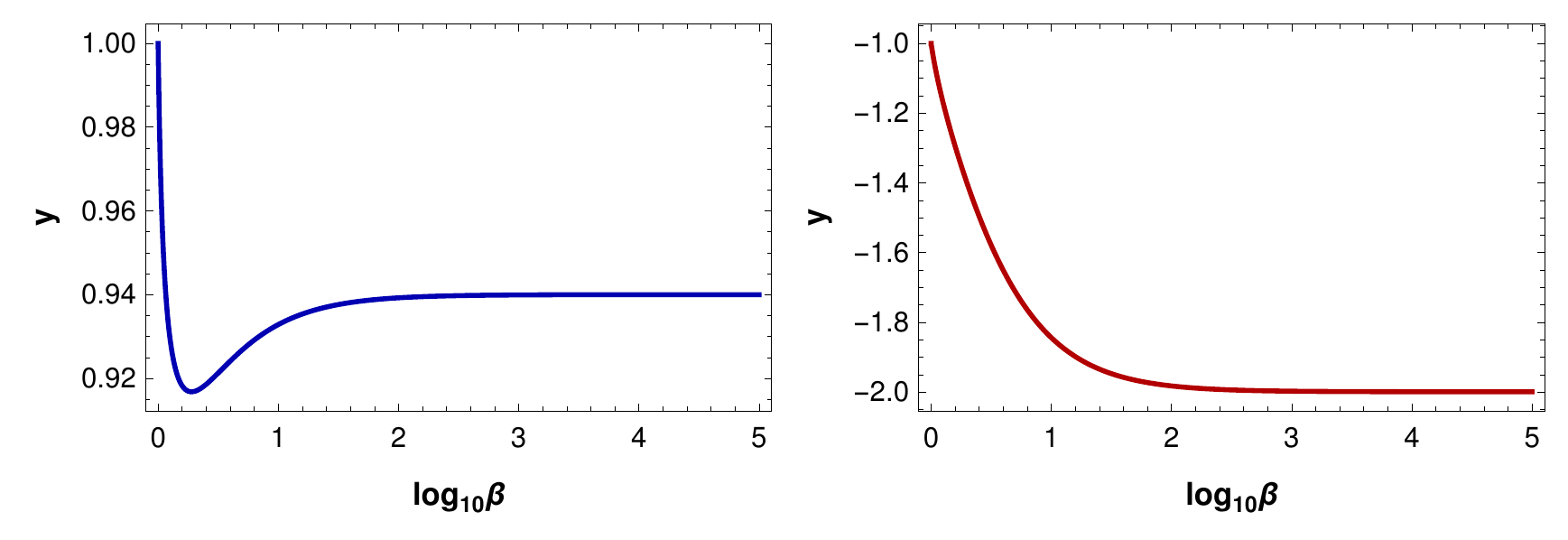}}
 \caption{The $y$ coordinate of two intersection points of $\Fix(R)$ with an approximate $R$-symmetric periodic solution of (\ref{eq:FS3d}) as a function of $\beta\in(1,100\,000]$.\label{fig:fs-per}}
\end{figure}

For small $\beta>1$, these periodic orbits are hyperbolic. For larger $\beta$ they become elliptic and crossing strong $1:k$ resonances, $k=2,3,4$ at approximate parameter values $\hat\beta_k$, respectively, where 
\begin{equation}\label{eq:beta}
\begin{array}{rcl}
 \hat\beta_2 &=& 340.18753498914353231,\\
 \hat\beta_3 &=& 453.442586821384637563,\\
 \hat\beta_4 &=& 679.95415507296894192.
 \end{array}
\end{equation}
Approximate initial conditions for resonant $R$-symmetric periodic orbits are $(0,\hat y_k,0)$, where
\begin{equation}\label{eq:fprim}
\begin{array}{rcl}
 \hat y_2 &=& 0.939792756949623004649,\\
 \hat y_3 &=& 0.939848585715971576498,\\
 \hat y_4 &=& 0.939904499164097608161.
 \end{array}
\end{equation}
and their shapes are shown in Fig.~\ref{fig:resOrbits}. It seems, the family $u_\beta$ continues to exists for all $\beta>1$ approaching $1:1$ resonance when $\beta\to\infty$.
\begin{figure}[htbp]
 \centerline{\includegraphics[width=\textwidth]{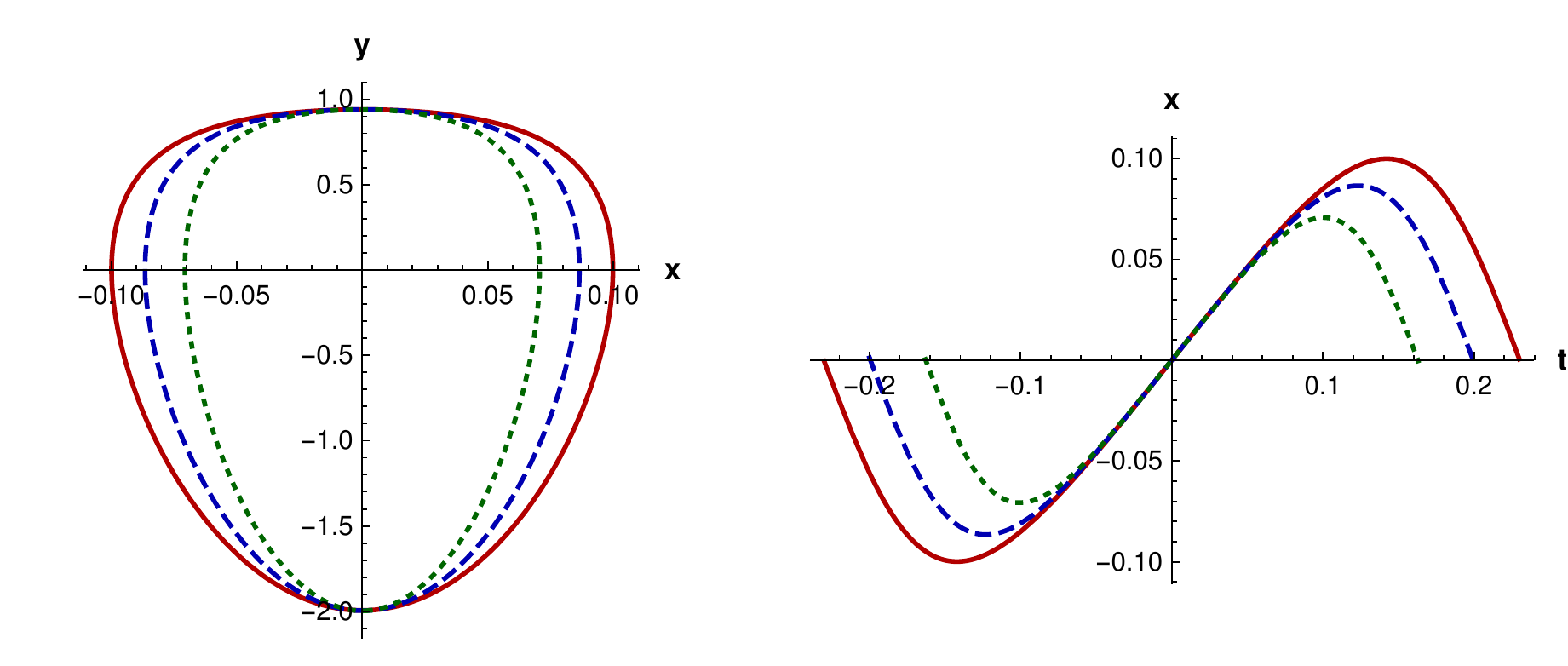}}
 \caption{$R$-symmetric periodic orbits corresponding to $1:2$ (solid), $1:3$ (dashed) and $1:4$ (dotted) resonances.\label{fig:resOrbits}}
\end{figure}

Let us define a Poincar\'e section
$$
\Pi=\{(x,y,z):z=0\}.
$$
We will use $(x,y)$ coordinates to describe points in $\Pi$. By $\PM_\beta:\Pi\to\Pi$ we denote the associated Poincar\'e map for the system (\ref{eq:FS3d}) with the parameter value $\beta$. The map $\PM_\beta$ is reversible with respect to involution $R(x,y)=(-x,y)$. We will also use the notation $\PM(\beta,x,y) = \PM_\beta(x,y)$. 

The aim of this section is to give a computer-assisted proof of the following theorem.
\begin{theorem}\label{thm:FalknerSkan}
 There is a smooth family $u_\beta=(x_\beta,y_\beta)$ of $R$-symmetric period-two points for $\PM_\beta$ parametrized by $\beta\in\mathcal J= [\frac{9}{8},100\,000]$. This family undergoes period-doubling, third order \tagname and period-quadrupling bifurcations at some points $(\beta_k^*,0,y_k^*)$, $k=2,3,4$, respectively, with
 \begin{equation*}
  \beta_k^*\in \mathcal J_k:=\hat \beta_k+|-1,1]\cdot 10^{-9},\qquad y_k^*\in Y_k:=\hat y_k+[-1,1]\cdot 10^{-13},
 \end{equation*}
 where approximate bifurcations points are listed in (\ref{eq:beta})--(\ref{eq:fprim}).
\end{theorem}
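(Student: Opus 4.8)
The plan is to verify the hypotheses of Theorem~\ref{thm:tag} and Theorem~\ref{thm:pt} on three explicit boxes, one around each apparent resonance, and to glue the three local pictures together by a single global continuation of the primary family. Since $R(x,y)=(-x,y)$ we have $\Fix(R)=\{x=0\}$ and $\dim X=2$, so in the notation of Section~\ref{sec:theory} we set $n=1$, $p=p_1=y$ and $q=q_1=x$. In particular the Lyapunov--Schmidt splitting is void ($p_2,q_2$ are empty) and the bifurcation function reduces to
\begin{equation*}
 G_k(\beta,y)=\pi_x\!\left(\PM_\beta^k(0,y)\right),\qquad k=2,3,4,
\end{equation*}
which I would study on the boxes $\mathcal J_k\times Y_k$.

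First I would establish the existence and smoothness of $u_\beta=(0,y_\beta)$ on the whole interval $\mathcal J=[\tfrac98,100\,000]$. The family is the zero set of $G_1(\beta,y)=\pi_x(\PM_\beta(0,y))$, i.e.\ the $R$-symmetric period-two condition, and I would validate it with the improved Interval Newton Operator of~\ref{sec:continuation}, subdividing $\mathcal J$ and using rigorous integration of (\ref{eq:FS3d}) together with its first variational equation to bound $\PM_\beta$ and $D\PM_\beta$. Transversality of the vector field to $\Pi=\{z=0\}$ holds because $\dot z=\beta(y^2-1)$ is bounded away from zero along the relevant orbits; this, together with smoothness of the flow, gives \textbf{C1}. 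The step simultaneously isolates $y_\beta$ and, restricted to each resonance box, supplies the primary curve $\xfp$ and the uniqueness property $\Fix(\PM^{2i}_\beta,U,\mathcal C_R)=\{\xfp(\beta)\}$ for $i=1,\dots,k-1$ required by (\ref{eq:fpEquation}).

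Then, for each $k\in\{2,3,4\}$ I would verify \textbf{C2--C4} on $\mathcal J_k\times Y_k$ with the $\mathcal C^r$-Lohner algorithm of \cite{WZ4}: \textbf{C2} is immediate here (no $p_2$), \textbf{C3} is checked by applying the Interval Newton Operator to $g_k$ so that its zero set is the graph $y\mapsto\beta(y)$, and \textbf{C4} is an enclosure that excludes $0$ from $\frac{\partial^2 G_k}{\partial\beta\partial y}+\frac{\partial^2 G_k}{\partial y^2}\,y_\beta'$. By Lemma~\ref{lem:uniqueIntersection} this yields a unique intersection point $(\beta_k^*,y_k^*)\in\mathcal J_k\times Y_k$. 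To identify the type, for the odd case $k=3$ I would enclose $\frac{\partial^2 G_3}{\partial y^2}$ and check (\ref{eq:nondegeneracyConditionTAG}), concluding a third-order \tagname bifurcation by Theorem~\ref{thm:tag}; for the even cases $k=2,4$ I would enclose $\frac{\partial^3 G_k}{\partial y^3}$ and $\frac{\partial^2 G_k}{\partial\beta\partial y}$ and check the sign condition (\ref{eq:nondegeneracyConditionPT}), concluding period doubling and quadrupling respectively by Theorem~\ref{thm:pt}. All partial derivatives of $G_k$ follow from the third-order variational equations integrated along $\PM_\beta^k$.

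The main obstacle is computational sharpness rather than anything conceptual. Verifying (\ref{eq:nondegeneracyConditionPT}) for $k=4$ demands rigorous third-order derivatives of $\PM_\beta^4$ with respect to both $y$ and $\beta$, i.e.\ a $\mathcal C^3$ integration over the four consecutive crossings of $\Pi$, and the enclosures of $\frac{\partial^3 G_4}{\partial y^3}$ and $\frac{\partial^2 G_4}{\partial\beta\partial y}$ must be tight enough to fix a definite sign on the entire box; this forces $\mathcal J_k$ and $Y_k$ to be extremely small (the stated widths $10^{-9}$ and $10^{-13}$), which is feasible only after a very accurate localization of the approximate bifurcation point via~\ref{sec:findingBifurcationPoints}. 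A secondary difficulty is the sheer length of the global continuation: the parameter interval spans five orders of magnitude and the return time varies strongly, so $\mathcal J$ must be adaptively subdivided to keep the Poincar\'e map well defined and the wrapping effect under control.
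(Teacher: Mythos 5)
Your overall strategy coincides with the paper's: the same bifurcation function $G_k(\beta,y)=\pi_x\PM_\beta^k(0,y)$ with a void Lyapunov--Schmidt reduction, a global continuation of the primary branch by the interval Newton method of~\ref{sec:continuation}, verification of \textbf{C2--C4} on each box $\mathcal J_k\times Y_k$, and the non-degeneracy conditions (\ref{eq:nondegeneracyConditionTAG}) and (\ref{eq:nondegeneracyConditionPT}) to invoke Theorem~\ref{thm:tag} and Theorem~\ref{thm:pt}. There is, however, one genuine gap. You claim that the continuation of the period-two family already ``supplies the uniqueness property $\Fix(\PM^{2i}_\beta,U,\mathcal C_R)=\{\xfp(\beta)\}$ for $i=1,\dots,k-1$''. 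The interval Newton operator applied to $G_1$ controls only the zero set of $G_1$, i.e.\ the case $i=1$. For $k=4$ and $i=2$, the set $\Fix(\PM^{4}_\beta,U,\mathcal C_R)$ is the zero set of $G_2$ on the box, which a priori may contain $R$-symmetric points of principal period $4$ that are not fixed points of $\PM_\beta^2$ and are therefore invisible to the $G_1$ continuation. Without excluding them you cannot conclude that the branching orbits at the $1:4$ resonance have principal period $8$ --- i.e.\ that the bifurcation is a genuine period quadrupling rather than a branching of period-$4$ orbits; this is precisely the first condition of Definition~\ref{def:tupDefinition} with $p=2$.

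The paper closes this by an extra computation specific to $k=4$: it encloses $g_2(\mathcal J_4\times Y_4)\subset 0.4481076[0,7]$, so by the factorization $G_2=(y-y(\beta))\,g_2$ the only zeros of $G_2$ in that box lie on the primary curve, and no period-$4$ points exist there. For $k=2$ and $k=3$ no such check is needed: since these are primes, the only even proper divisor of $2k$ is $2$, and period-$2$ points are already controlled by the primary continuation. You should therefore add the rigorous enclosure of $g_2$ on $\mathcal J_4\times Y_4$ to your list of quantities to be bounded. Apart from this omission, your plan --- including the choice of derivatives to enclose, the remark on transversality of the section $\{z=0\}$ away from $y=\pm1$, and the assessment that the third-order variational computations for $k=4$ are the bottleneck --- matches what the paper actually does.
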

\begin{proof}
 The existence of a smooth branch of $R$-symmetric period-two points for $\PM_\beta$ has been proved by means of the method described in \ref{sec:continuation}. We used an adaptive cover of the  parameter range $\mathcal J=\bigcup_{j=1}^{10478}\mathcal J_j$, where the diameters of intervals $\mathcal J_j$ are smaller (approximately $2\cdot 10^{-5}$) if $\mathcal J_j$ is close to $\frac{9}{8}$ and quite large (above $141$) in the second end of the parameter range. Then, for each subinterval $\mathcal J_j$ we check the assumptions of the parametrized Interval Newton Method (Lemma~\ref{lem:ContinuationLemma}). If succeed, we prove that the segments $y(\mathcal J_j)$ glue into a smooth curve by checking the the bounds on $y(\mathcal J_j)$ resulted from the Interval Newton operator overlap, when their domains do.
 
 We will give more details regarding validation of bifurcations. The bifurcation function is
 \begin{equation*}
  G_k(\beta,y) = \pi_x\PM_\beta^k(0,y)
 \end{equation*}
 for $k=2,3,4$. Hence, the Lyapunov-Schmidt reduction is not needed. In order to apply the general framework introduced in Section~\ref{sec:theory} we need first to check conditions \textbf{C2--C4}. We have the following bounds on $G_1$. 
 \begin{equation*}
  \begin{array}{c|c|c|c}
   & G_1(\mathcal J_k\times\{\hat y_k\}) & \frac{\partial G_1}{\partial y}(\mathcal J_k\times Y_k) & -G_1(\mathcal J_k\times\{\hat y_k\})\cdot \frac{\partial G_1}{\partial y}(\mathcal J_k\times Y_k)^{-1}\\\hline
   k=2 & [-1.134,1.22]\cdot 10^{-14} & 0.63412293[27,30] & [-1.8,1.8]\cdot 10^{-14}\\\hline
   k=3 & [-9.19,9.06]\cdot 10^{-15} & 0.549359258[1,4] & [-1.7,1.7]\cdot 10^{-14}\\\hline   
   k=4 & [-6.82,6.73]\cdot 10^{-15} & 0.448707213[7,9] & [-1.5,1.6]\cdot 10^{-14}
  \end{array}
 \end{equation*}
 We see that in each case $\hat y_k-G_1(\mathcal J_k\times\{\hat y_k\})\cdot \frac{\partial G_1}{\partial y}(\mathcal J_k\times Y_k)^{-1}\subset Y_k$, which proves that, there is a branch $(\beta,y(\beta))$ of zeroes of $G_1$ parametrized by $\beta\in\mathcal J_k$, for $k=2,3,4$.
 
 In order to check \textbf{C3} we need bounds on the reduced bifurcation function $g_k$ -- see (\ref{eq:gk}). We have 
 \begin{equation*}
  \begin{array}{c|c|c|c}
   & g_k(\{\hat \beta_k\}\times Y_k) & \frac{\partial g_k}{\partial \beta}(\mathcal J_k\times Y_k) & -g_k(\{\hat \beta_k\}\times Y_k)\cdot \frac{\partial g_k}{\partial \beta}(\mathcal J_k\times Y_k)^{-1}\\\hline
   k=2 & [-9,9]\cdot 10^{-17} & 0.0037227[7,9] & -[2.3,2.33]\cdot 10^{-14}\\\hline
   k=3 & [-1.37,1.36]\cdot 10^{-12} & 0.00364[29,31] & [-3.73,3.74]\cdot 10^{-10}\\\hline   
   k=4 & [-2.32,2.17]\cdot 10^{-15} & 0.001[2961,3416] & [-1.67,1.79]\cdot 10^{-12}
  \end{array}
 \end{equation*}
 Note, that in order to obtain tiny bounds on $g_k(\{\hat \beta_k\}\times Y_k)$ we used high precision interval arithmetic \cite{mpfr}. Again, in each case we have $\hat \beta_k-g_k(\{\hat \beta_k\}\times Y_k)\cdot \frac{\partial g_k}{\partial \beta}(\mathcal J_k\times Y_k)^{-1}\subset \mathcal J_k$, which proves that for $k=2,3,4$ the function $g_k$ has branch of zeroes $(\beta(y),y)$ parametrized by $y\in Y_k$ and thus \textbf{C3} is satisfied.

 From the following estimates 
 \begin{equation}\label{eq:FSC4Bounds}
  \begin{array}{c|c|c|c}
   & \frac{\partial^2 G_k}{\partial \beta\partial y}(\mathcal J_k\times Y_k) & \frac{\partial^2 G_k}{\partial y^2}(\mathcal J_k\times Y_k) & y'(\mathcal J_k)\\\hline
   k=2 & 0.0037227[7,9] & [-1,1]\cdot 10^{-5} & 6.56766[1,3]\cdot 10^{-7}\\\hline
   k=3 & 0.00364[79,81] & -[27.1801,27.1803] & 3.699[899,901]\cdot 10^{-7}\\\hline   
   k=4 & 0.001[2961,3416] & [-0.045,0.045] & 1.64687[86,92]\cdot 10^{-7}
  \end{array}
 \end{equation}
 it follows that 
 \begin{equation*}
  0\notin \frac{\partial^2 G_k}{\partial \beta\partial y}(\mathcal J_k\times Y_k) + \frac{\partial^2 G_k}{\partial y^2}(\mathcal J_k\times Y_k)y'(\mathcal J_k)
\end{equation*}
for $k=2,3,4$ and the condition \textbf{C4} is also satisfied. 

There remains to check conditions specific for each type of bifurcation. In (\ref{eq:FSC4Bounds}) we have already computed bound on $\frac{\partial^2 G_2}{\partial y^2}(\mathcal J_2\times Y_2)$, which proves that the assumptions of Theorem~\ref{thm:tag} are satisfied. Thus, the proof of the existence of third order \tagname bifurcation for $\PM^2$ in $\mathcal  J_k\times \{0\}\times Y_k$ is completed.

For period-doubling and period-quadrupling bifurcations we have to check non-degeneracy condition (\ref{eq:nondegeneracyConditionPT}). From (\ref{eq:FSC4Bounds}) we already have, that for $k=2,4$ there holds  $\frac{\partial^2 G_k}{\partial \beta\partial y}(\mathcal J_k\times Y_k)>0$. Thus, it suffices to check that $\frac{\partial^3 G_k}{\partial y^3}(\mathcal J_k\times Y_k)$ is non-zero. We have the following bounds
\begin{eqnarray*}
\frac{\partial^3 G_2}{\partial y^3}(\mathcal J_2\times Y_2) & \subset & [1372, 1374]\quad\text{and}\\
\frac{\partial^3 G_4}{\partial y^3}(\mathcal J_4\times Y_4) & \subset & -[1860,2047].
\end{eqnarray*}
Eventually, we have to check that the principal periods of bifurcating orbits. The cases $k=2,3$ do not require any computation, as both numbers are primes. For the case $k=4$ we computed 
\begin{equation*}
g_2(\mathcal J_4\times Y_4) \subset 0.4481076[0,7].
\end{equation*} 
This completes the proof.
\end{proof}

\section{Halo orbits in the Circular Restricted Three Body Problem}\label{sec:cr3bp}
In this section we apply the general framework described in Section~\ref{sec:hamiltonian} to the \emph{Circular Restricted Three Body Problem}. First, we will give a short overview of the CR3BP and we list some of its relevant properties. Then, we will give a computer-assisted proof, that the well known families of halo orbits undergo various types of bifurcations.

\subsection{Equations of motion}
The CR3BP is a mathematical model, that describes the motion of a small body with negligible mass under the gravitational influence of two point like big bodies, called primaries, which rotate around their common mass centre on a circle.

Denote by $\mu$ the relative mass of the smaller primary. In a rotating coordinate system centred at the common mass centre of two big primaries, the dynamics of the small particle is governed by the following system of second-order differential equations \cite{KLMR,S}
\begin{equation}\label{eq:cr3bp}
\ddot{x} - 2\dot y = \frac{\partial \Omega_\mu(x,y,z)}{\partial x},\quad
\ddot{y} + 2\dot x = \frac{\partial \Omega_\mu(x,y,z)}{\partial y},\quad
\ddot{z} = \frac{\partial \Omega_\mu(x,y,z)}{\partial z},
\end{equation}
where
\begin{equation*}
\Omega_\mu(x,y,z) = \frac{1}{2}(x^2+y^2) + \frac{1-\mu}{\sqrt{(x+\mu)^2+y^2+z^2}} + \frac{\mu}{\sqrt{(x-1+\mu)^2+y^2+z^2}}.
\end{equation*}
The system is Hamiltonian and it admits a first integral, called the Jacobi constant, which is given by
\begin{equation*}
C_\mu(x,y,z,\dot x,\dot y,\dot z) = 2\Omega_\mu(x,y,z) - (\dot x^2+\dot y^2 + \dot z^2).
\end{equation*}
The hyperplane $ \{(x,y,z=0,\dot x,\dot y, \dot z=0)\}$ is invariant under the local flow induced by (\ref{eq:cr3bp}) and the corresponding four-dimensional Hamiltonian system is called the \emph{Planar Circular Restricted Three Body Problem} (PCR3BP). 

\subsection{Symmetries of the CR3BP}
The system possesses two main symmetries
\begin{equation}\label{eq:CR3BPsymmetries}
\begin{array}{rcl}
S&\colon& (x(t),y(t),z(t))\longrightarrow(x(t),y(t),-z(t)),\\
R&\colon& (x(t),y(t),z(t))\longrightarrow(x(-t),-y(-t),z(-t)),
\end{array}
\end{equation}
It is important to note that $R$ and $S$ commute. This property is required for our method of validation of the existence of \isoname bifurcations --- see Theorem~\ref{thm:main_isochronous}.

%
\subsection{Poincar\'e map in the CR3BP}
Let us define the following Poincar\'e section
\begin{eqnarray*}
\Pi &=& \{(x,y,z,\dot x,\dot y,\dot z)\in\mathbb R^6:y=0\}
\end{eqnarray*}
and denote by $\PM_\mu:\Pi \to \Pi$ the associated Poincar\'e map. We will skip the dependency on $\mu$ and write $\PM$, if it will be clear from the context.
Since the $y$ variable is fixed and equal to zero on the section, we will use $(x,z,\dot x,\dot y,\dot z)$ coordinates to describe points in $\Pi$. With some abuse of notation on $R$,  we will denote by the same letter the reversing symmetry of the system restricted to points on $\Pi$, i.e.
\begin{equation*}
R(x,z,\dot x,\dot y,\dot z) = (x,z,-\dot x, \dot y, -\dot z).
\end{equation*}
Since 
\begin{equation*}
\Fix(R) = \{(x,z, \dot x=0, \dot y, \dot z=0) \, : \, x,z,\dot y \in \mathbb{R} \} \subset \Pi 
\end{equation*}
by \cite[Lemma 3.3]{W} the mapping $\PM$ is reversible, too. Thus, the frameworks for computer-assisted verification of various types of bifurcations introduced in Section~\ref{sec:hamiltonian} can be applied to $\PM$.

\subsection{Periodic orbits near libration points}
The CR3BP possesses five equilibrium points, called the \emph{libration points}. All of them are located in the $\{z=0,\dot z=0\}$ invariant hyperplane and thus they are equilibrium points for the PCR3BP, as well. Three of libration points, commonly denoted by $L_1$, $L_2$ and $L_3$, are collinear and are located on the $x$-axis. They are of saddle-centre type for the PCR3BP. It is well known \cite{S}, that for all $\mu\in(0,1)$ there exists a family of $R$--symmetric periodic orbits, called \emph{planar Lyapunov orbits} (PLO), that surround these libration points --- see also Figure~\ref{fig:bifurcation3d}. In \cite{C,CR} the existence of Lyapunov orbits around $L_1$ and $L_2$ libration points for selected mass parameters has been proved in an explicit domain. Computer-assisted methods have been used to prove \cite{Ar,SK,WZ1,WZ2}, that for the mass parameter $\mu=0.0009537$ corresponding to the Sun-Jupiter system there are Lyapunov orbits around $L_1$ and $L_2$ for certain energy level, and there is countable infinity of connecting orbits between them in both directions. 

For the full system (CR3BP) the libration points $L_{1,2,3}$ are of saddle-centre-centre type, for all $\mu$. In additional direction $z$ there exists a second family of \emph{vertical Lyapunov orbits} (VLO), which are double symmetric both with respect to symmetry $S$ and reversing symmetry $R$ defined by (\ref{eq:CR3BPsymmetries}). These orbits intersect twice the $x$-axis. Therefore, an object (a spacecraft) located nearby one of those orbits will be periodically collinear with the two main primaries. Thus eclipses or shadows are unavoidable for trajectories approaching planar or vertical Lyapunov orbits.  

There is a numerical evidence \cite{DRPKDGV,GM,H}, that a branch of out-of-plane $R$-symmetric orbits, called \emph{halo orbits}, bifurcates from the Lyapunov family. In opposite to planar and vertical Lyapunov orbits, they never cross the $x$-axis, except at the bifurcation point. For large vertical amplitudes $z$ the halo orbits become more and more aligned to the $(y,z)$ plane allowing continuous observation of both primaries without eclipses. Parts of $L_1$-Lyapunov and $L_1$-halo families are shown in Figure~\ref{fig:bifurcation3d} for the relative mass corresponding to the Sun-Jupiter system.

\begin{figure}[htbp]
\centerline{\includegraphics[width=.5\textwidth]{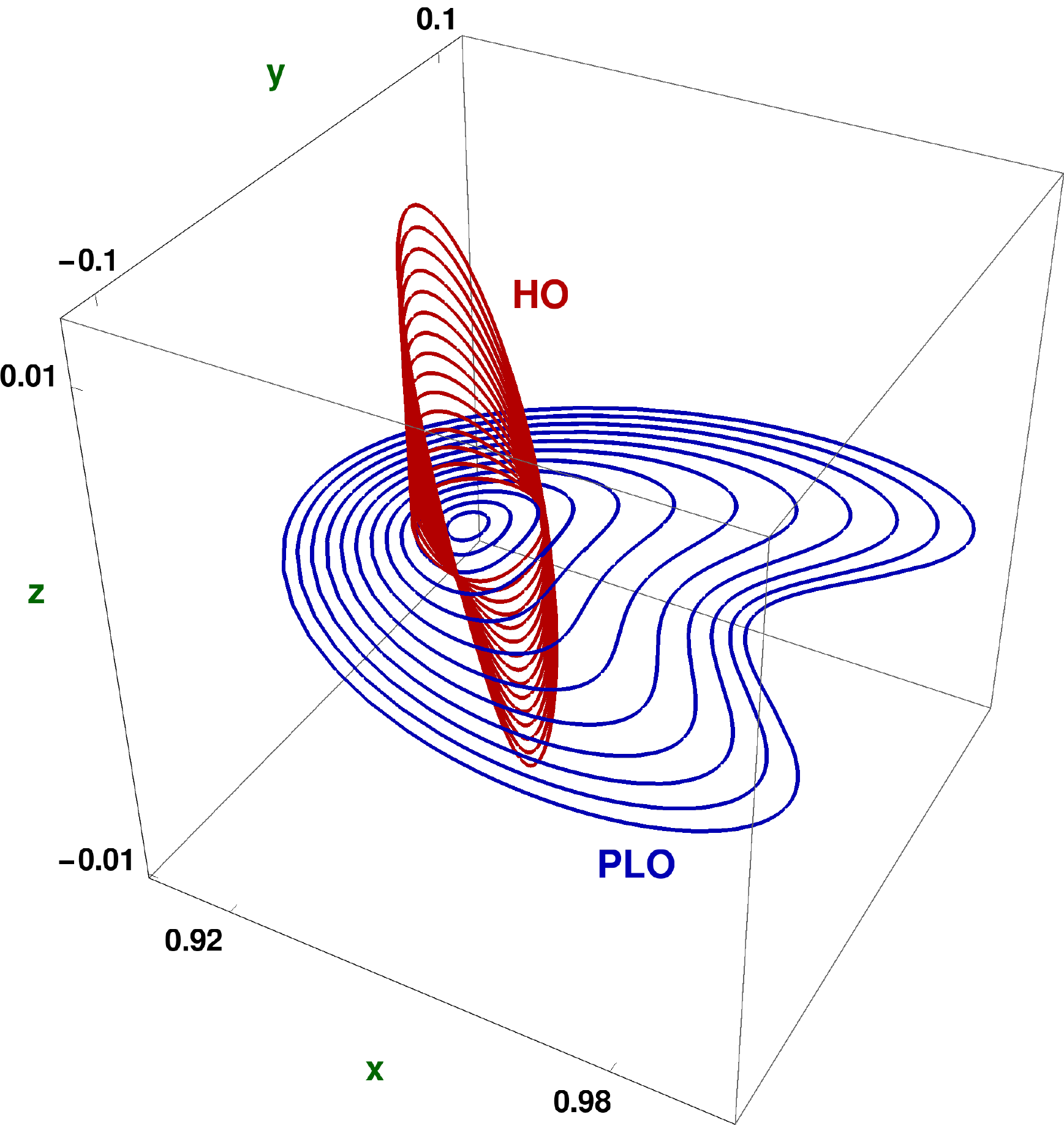}\includegraphics[width=.5\textwidth]{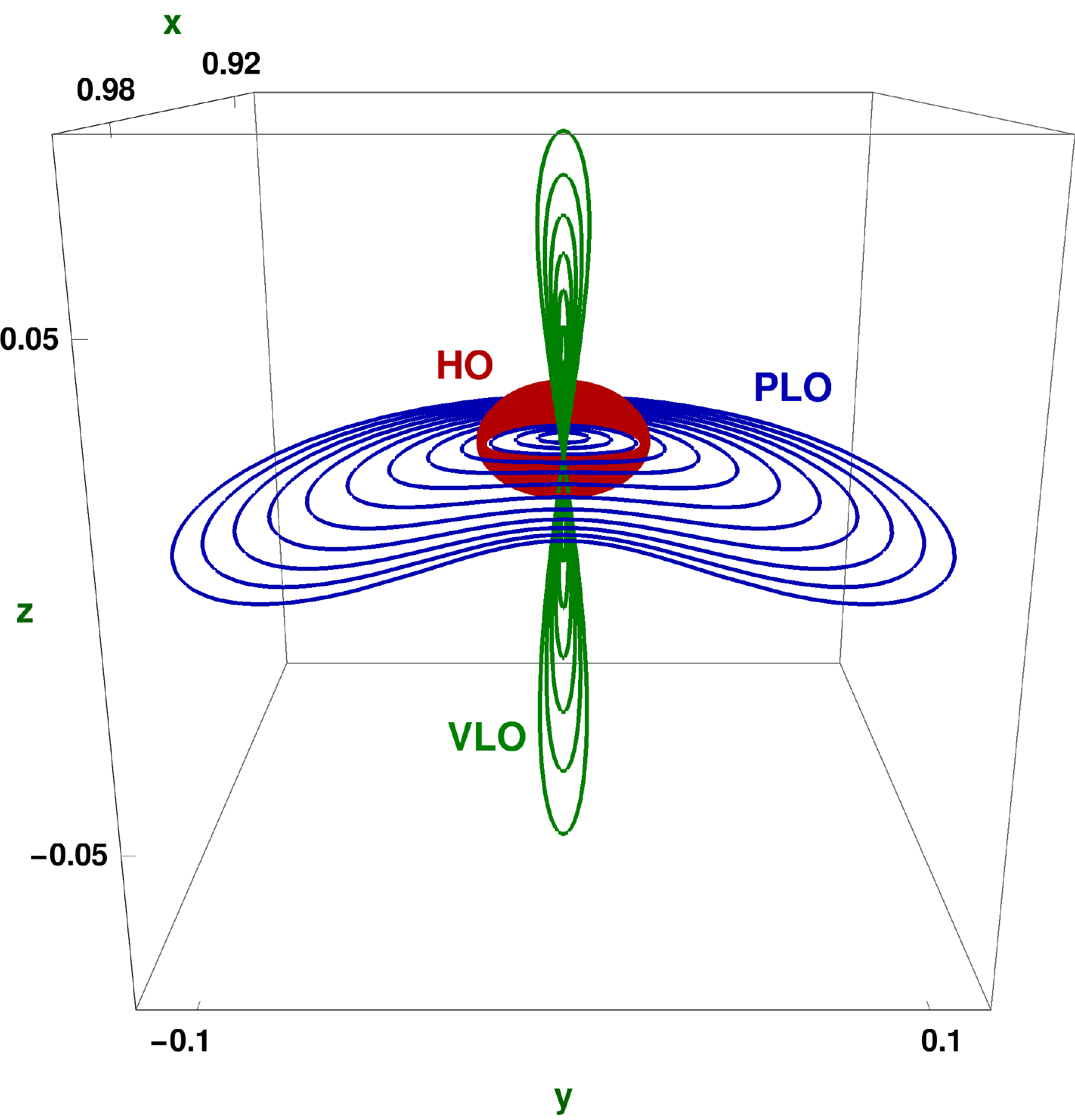}}
\caption{A branch of planar Lyapunov orbits near $L_1$ libration point and bifurcating halo orbits for the mass parameter $\muSJ=9.5388114032796904\cdot 10^{-4}$ corresponding to the Sun-Jupiter system --- see also (\ref{eq:EM_SJ_mass}).\label{fig:bifurcation3d}}
\end{figure}

\subsection{\Isoname bifurcations of halo orbits in the CR3BP}
The halo orbits in the CR3BP are $R$-symmetric, out-of-plane periodic orbits, which bifurcate from the planar Lyapunov family --- see Figure~\ref{fig:bifurcation3d}. Although, there were extensive numerical study of these orbits and their bifurcations (just to mention few papers \cite{H,GM,DRPKDGV}), to the best of our knowledge, they were never proved to exist.  

The best theoretical result in this direction has been done in \cite{CCP,CPS}. The authors consider a~certain normal form, which approximates the CR3BP. They prove, that in the normal form system there is a branch of $R$-symmetric halo orbits bifurcating from $R$-symmetric planar Lyapunov orbits. Moreover, they give an explicit expression for the bifurcation point. This result is valid for all $L_{1,2,3}$ points and for all mass parameters.

The results of this section are complementary to those from \cite{CCP}. We give a computer-assisted proof of the existence of halo orbits for the original CR3BP system for all libration points $L_{1,2,3}$ and for some (not full) range of $\mu$. We will also study continuation and bifurcations of $L_{1,2}$-halo families. 
\begin{theorem}\label{thm:main_isochronous}
  There are continuous functions 
  \begin{equation*}
  h_i\colon [\mu_*,\mu^*]\times Z\ni (\mu,z)\to (x_i(\mu,z),0,z,0,\dot y_i(\mu,z),0)\in \Fix(R)
  \end{equation*}
  with $[\mu_*,\mu^*]=[9.5\cdot10^{-4},0.5]$ and $Z=[-1,1]\cdot \Delta_z$, where $\Delta_z=10^{-7}$, such that 
  \begin{enumerate}
   \item $h_i(\mu,0)$ is a point of \isoname bifurcation of out-of-plane family of $R$--symmetric periodic orbits from the planar family of Lyapunov orbits near $L_i$ libration point and
   \item for $\mu\in [\mu_*,\mu^*]$ and $z\in Z$ the point $h_i(\mu,z)$ is an initial condition for an $R$--symmetric periodic out-of-plane (halo) orbit.
  \end{enumerate}
\end{theorem}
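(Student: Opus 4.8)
The plan is to realise each assertion, for each collinear point $L_i$ and each admissible mass, as an application of Theorem~\ref{thm:Hiso}, carried out by validated numerics uniformly over an adaptive cover of $[\mu_*,\mu^*]$. First I would install the geometric data of the previous subsections: the section $\Pi=\{y=0\}$, the family $\PM_\mu$, and the pair $(R,S)$ of (\ref{eq:CR3BPsymmetries}), which commute, so the commutativity hypothesis of Theorem~\ref{thm:Hiso} is automatic; the Jacobi constant $C_\mu$ plays the role of $H$. On $\Fix(R)=\{\dot x=\dot z=0\}\subset\Pi$ the free coordinates are $(x,z,\dot y)$, and the decisive choice is to take $p_1=z$, with $(p_0,p_2)$ the remaining pair $(x,\dot y)$ and $q=(\dot x,\dot z)$ the block carrying the Lyapunov--Schmidt reduction (\ref{eq:LSreductionH}), the bifurcation component $q_1$ being the $S$-odd coordinate $\dot z$. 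Because the plane $\{z=\dot z=0\}$ is invariant, the planar Lyapunov family lies exactly at $z=0$; hence $p_1(p_0)\equiv0$ along the primary curve, the factor $p_1'(P_0)$ in \textbf{HC4} collapses, and the symmetry $z\mapsto-z$ forces $G_1^H$ to be odd in $p_1$ so that $g_1^H$ of (\ref{eq:gkH}) is even. The halo family is then precisely the branching-off curve, already parametrised by $p_1=z$.

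For a single $\mu$ the proof is a literal verification of the hypotheses of Theorem~\ref{thm:Hiso} with $k=1$. Using the algorithm of~\ref{sec:findingBifurcationPoints} I would locate an approximate bifurcation point and fix explicit boxes $P_0$, $P_1\supseteq Z$, $P_2$; rigorous integration of (\ref{eq:cr3bp}) with its variational equations up to order three (the $\mathcal C^r$-Lohner algorithm of \cite{WZ4,CAPD}) then supplies interval enclosures of $\PM_\mu$ and of its derivatives on these boxes. From these I would solve (\ref{eq:LSreductionH}) by the Interval Newton method of~\ref{sec:continuation} to obtain $p_2^H$ and certify \textbf{HC2}; bound $G_1^H$, $g_1^H$ and their derivatives, verify that the zero set of $g_1^H$ is a graph $p_0(p_1)$ (\textbf{HC3}) and that the non-inclusion \textbf{HC4} holds, so that Lemma~\ref{lem:uniqueIntersection} isolates a unique intersection point $x^*$; and finally check the sign conditions $\partial H/\partial p_0(x^*)\neq0$, $h_{fp}'(p_0^*)>0$ and $h_b''(p_1^*)>0$. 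This last estimate is the most delicate, since $h_b''$ ultimately depends on third-order derivatives of $\PM_\mu$; its positivity is exactly what makes the energy attain a minimum along the halo branch and thereby produces the two $S$-related halo orbits. Theorem~\ref{thm:Hiso} then yields the \isoname bifurcation at $s(x^*)$, which is assertion~(1), while the branching-off curve, read off in the coordinate $z$, furnishes the halo orbits of assertion~(2).

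To pass to the two-parameter statement I would run this scheme with $\mu$ replaced by a thin interval, covering $[\mu_*,\mu^*]$ by finitely many such intervals and refining adaptively where the enclosures degrade. Every hypothesis of Theorem~\ref{thm:Hiso} is an open condition (strict inequality or non-inclusion), so verifying it on a $\mu$-interval establishes the bifurcation simultaneously for all $\mu$ inside it; the uniqueness from Lemma~\ref{lem:uniqueIntersection} together with the implicit function theorem makes $x^*$, hence $h_i(\mu,0)$, depend smoothly on $\mu$, and gluing overlapping subintervals gives continuity on all of $[\mu_*,\mu^*]$. For $z\neq0$ one moreover observes that the halo condition $\pi_{(\dot x,\dot z)}\PM_\mu(x,z,0,\dot y,0)=0$ is non-degenerate in $(x,\dot y)$, so the same Interval Newton machinery delivers $x_i(\mu,z),\dot y_i(\mu,z)$ continuous in $(\mu,z)$ on $Z\setminus\{0\}$, consistent with the branching-off curve and pinned at $z=0$ to $h_i(\mu,0)$.

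The main obstacle I anticipate is computational rather than conceptual: producing third-order derivative enclosures of $\PM_\mu$ tight enough to certify the sign conditions (above all $h_b''>0$, and \textbf{HC4}) uniformly across the very wide mass range $[9.5\cdot10^{-4},0.5]$ and for all three points $L_{1,2,3}$. This is what forces the use of high-precision interval arithmetic and a fine, adaptive $\mu$-cover, and it is presumably also why the range stops short of $\mu=0$: as $\mu\to0$ near $L_3$ the energy along the bifurcating family ceases to be monotone (Remark~\ref{rem:L3monotonocity}), so the sign hypotheses of Theorem~\ref{thm:Hiso} may fail and the present scheme no longer applies there.
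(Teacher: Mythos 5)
Your proposal is correct and follows essentially the same route as the paper: an adaptive cover of $[\mu_*,\mu^*]$ by parameter subintervals, localization of approximate bifurcation points via the scheme of~\ref{sec:findingBifurcationPoints}, validation of the Lyapunov and halo branches with the Interval Newton method of~\ref{sec:continuation}, verification of \textbf{HC2--HC4} and the sign conditions of Theorem~\ref{thm:Hiso} (with the same coordinate choices $p_0=x$, $p_1=z$, $p_2=\dot y$, $q=(\dot x,\dot z)$), and gluing of overlapping pieces. The only minor divergences are that the paper checks $\frac{d^2}{dz^2}C_\mu$ merely to be nonzero rather than positive (noting that the sign does not affect the geometry), and attributes the cutoff $\mu_*=9.5\cdot10^{-4}$ primarily to CPU cost rather than to failure of the hypotheses.
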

\begin{proof}
We applied the framework from Section~\ref{sec:hamiltonian} to the family of Poincar\'e maps $\PM_\mu$. For fixed $i=1,2,3$ we proceed as follows (we skip dependencies on $i$ to simplify notation). The range of parameters $[\mu_*,\mu^*]$ is initially subdivided into $N$ smaller overlapping subintervals $[\mu_*,\mu^*]=\bigcup_{j=1}^{N}\mu_j$, where $\mu_j=[\underline{\mu_j},\overline{\mu_j}]$. For each $j=1,\ldots,N$ we execute in parallel the following (independent) tasks.
 \begin{enumerate}
  \item We find an approximate bifurcation point $\hat u_j = (\hat x,0,0,\hat{\dot y},0)$ for the mass parameter $\hat \mu_j=\frac{1}{2}(\underline{\mu_j}+\overline{\mu_j})$. For this purpose we use the scheme described in~\ref{sec:findingBifurcationPoints} --- see also Remark~\ref{rem:AppIsoPoint}.
  
  \item The planar double-symmetric Lyapunov orbits can be easily isolated by restriction to the planar system. Using Lyapunov-Schmidt reduction and the method from~\ref{sec:continuation} we validate the existence of smooth, two-parameter families of periodic orbits
  \begin{eqnarray*}
  u_{fp}&\colon&\mu_j\times X\to X\times \{0\}\times\{0\}\times \dot Y\times \{0\},\\
  h_i&\colon&\mu_j\times Z\to  X\times Z\times\{0\}\times \dot Y\times \{0\},
  \end{eqnarray*}
  which correspond to Lyapunov orbits and halo orbits, respectively. The diameters of $X$ and $\dot Y$ were hand-optimized to speed-up computation.
  \item The set $W=X\times \{0\}\times\{0\}\times \dot Y\times \{0\}$ is a bound for the bifurcation point for each $\mu\in\mu_j$. Then we check the assumptions of Theorem~\ref{thm:Hiso}, i.e. $\frac{d}{dx} C_{\mu_j}(W)\neq 0$, $\frac{d}{dx}C_{\mu_j}(u_{fp}(\mu_j,X))\neq 0$ and $\frac{d^2}{dz^2}C_{\mu_j}h_i(\mu_j,z=0)\neq 0$. Note, that in Theorem~\ref{thm:Hiso} we required, that the branching-off curve is convex, but switching of either sign does not change the geometry of overall picture.
 \end{enumerate}
 If any of the above steps fails, the interval $\mu_j$ is being subdivided and we repeat computation for  each element of subdivision. Finally, if all tasks are completed, using the methods from \cite{GLM,BLM} we check, that the pieces of $h_i$ glue into a smooth function. 
 \end{proof}
\begin{figure}[htbp]
\centering
\includegraphics[width=\textwidth]{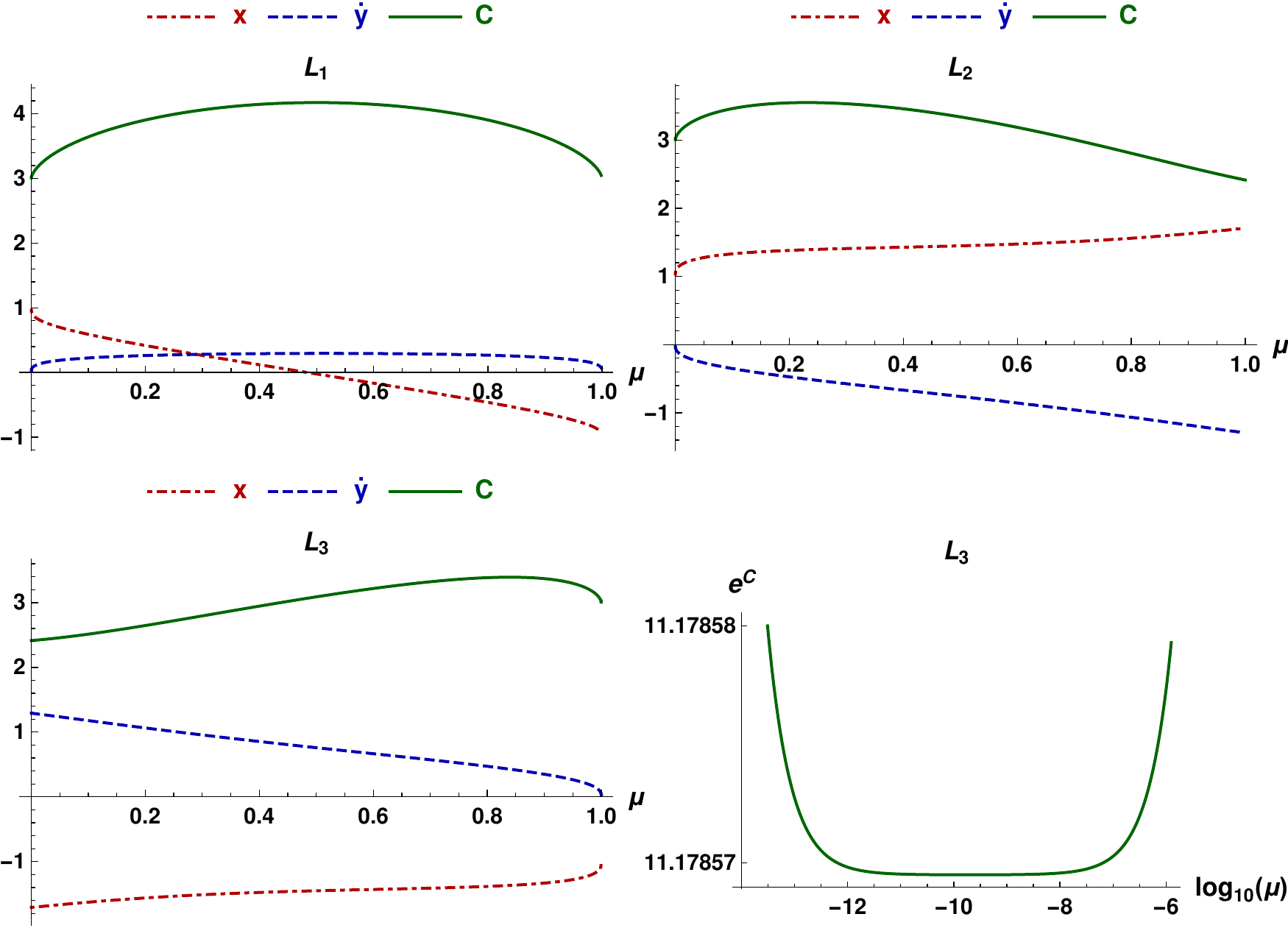}
\caption{Curves of \isoname bifurcation points $(x_i(\mu),\dot y_i(\mu))$ for $L_{1,2,3}$-Lyapunov families and the Jacobi constant $C_\mu(x_i(\mu),0,0,0,\dot y(\mu),0)$. The right-bottom panel indicates, that the Jacobi constant at the bifurcation point as a function of $\mu$ has a local minimum. In order to make this minimum more evident, the plot is shown in $\log$-$\exp$ scale.}
\label{fig:isochoronus}
\end{figure}

Graphs of $\mu\to h_i(\mu,0)$ are shown in Figure~\ref{fig:isochoronus}. We would like to emphasize, that they match quite well bifurcation points coming from the normal forms found in \cite{CCP}. Our validation algorithm used to prove Theorem~\ref{thm:main_isochronous}, by its construction, cannot continue with $\mu\to 0$. The threshold value $\mu_*=9.5\cdot10^{-4}$ as well as size of out of plane amplitude $\Delta_z=10^{-7}$ are by our choice a compromise between CPU time needed to obtain the result and the range of $\mu$ and $z$ we can cover. In particular, the range of mass parameter $[\mu_*,\mu^*]$ contains two relevant values
\begin{equation}\label{eq:EM_SJ_mass}
\muSJ = 9.5388114032796904\cdot 10^{-4},\quad
\muEM = 1.2150584460350998\cdot 10^{-2}
\end{equation}
corresponding to Sun-Jupiter and Earth-Moon systems, respectively. The values listed in (\ref{eq:EM_SJ_mass}) are taken as the nearest IEEE-754 double precision numbers to the recent mass measurements reported in \cite{PS, ASA}. 

\begin{remark}\label{rem:L3monotonocity}
 The $L_3$ is a special case, because for small $\mu$ the maximal order of normal form constructed in \cite{CCP} is $2$, and it is divergent when $\mu\to 0$ \cite{P}. In this case we observe an interesting phenomenon. If $\mu\to 0$, the Jacobi integral seems to be not monotone along the curve of bifurcation points --- see Figure~\ref{fig:isochoronus} right-bottom panel. The computation, which indicate the presence of a local minimum is non-rigorous but performed in high accuracy (400 bits of mantissa) floating point arithmetic \cite{mpfr} and using $80^\mathrm{th}$ order ODE solver with the tolerance per time step set to $10^{-60}$. 
\end{remark}

\subsection{Continuation and bifurcations of halo orbits}

Theorem~\ref{thm:main_isochronous} guarantees that for $\mu\in[9.5\cdot 10^{-4},0.5]$ a branch of halo orbits near $L_i$ can be parametrized by out of plane amplitude $z$ to at least $|z|\leq \Delta_z$. Numerical simulations \cite{DRPKDGV,GM} strongly indicate, that these branches continue to exist for much larger amplitudes and that they undergo period-doubling and period-quadrupling and third-order \tagname bifurcations. The next theorem addresses this issue.

\begin{theorem}\label{thm:L1_circle}
  Consider the CR3BP with $\mu\in\{\muSJ,\muEM\}$ as defined in (\ref{eq:EM_SJ_mass}). There exists a smooth function $h_\mu:\mathcal{S}^1\to \mathbb R^6$ such that for $\tau\in[0,2\pi]$ the following holds true.
  \begin{enumerate}
  \item $h_\mu(\tau)=(x(\tau),0,z(\tau),0,\dot y(\tau),0)$  is an initial condition for an $R$--symmetric periodic (halo) orbit. 
  \item $h_\mu(\tau) = S(h(2\pi-\tau))$ -- the family is $S$ symmetric. 
  \end{enumerate}
  This closed loop of halo orbits intersect the invariant subspace $\{z=0,\dot z=0\}$ at exactly two points $h(0)$ and $h(\pi)$, at which an \isoname bifurcation occurs. 
  Moreover, the branch $h_\mu$ undergoes period doubling, period quadrupling and third order \tagname bifurcations as listed in Table~\ref{tab:L1Tuplings}.
\end{theorem}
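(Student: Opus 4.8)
The plan is to treat the two mass values $\mu\in\{\muSJ,\muEM\}$ separately and, for each, to assemble the loop out of one computed arc plus its $S$-image, certifying every bifurcation along the way with the theorems of Section~\ref{sec:hamiltonian}. For fixed $\mu$, Theorem~\ref{thm:main_isochronous} already supplies the two crossing points $h(0),h(\pi)$ in $\{z=0,\dot z=0\}$ and the fact that the out-of-plane family detaches there. Starting from one of these points I would continue the $R$-symmetry equation $\pi_{q}(\PM_\mu(p,0))=0$, with $p=(x,z,\dot y)$ and $q=(\dot x,\dot z)$, using the parametrized Interval Newton operator of Appendix~\ref{sec:continuation} over an adaptive cover. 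Because the out-of-plane amplitude $z$ is not monotone along the loop, the continuation parameter must be switched (to arclength, or locally to whichever phase coordinate has non-vanishing derivative) at the turning points; overlapping Interval Newton enclosures on successive subdomains, checked as in \cite{GLM,BLM}, are what let the pieces glue into a single $\mathcal{C}^1$ map. This produces the positive-$z$ arc between $h(0)$ and $h(\pi)$.

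The negative-$z$ arc is then obtained, without independent computation, as the $S$-image of this arc: since $R$ and $S$ commute (equation~(\ref{eq:CR3BPsymmetries})) and $\PM_\mu$ is $S$-symmetric, $S$ sends $R$-symmetric halo orbits to $R$-symmetric halo orbits, so the $S$-image of a validated arc is again a validated arc of the branch. Identifying $\tau\leftrightarrow 2\pi-\tau$ simultaneously closes the branch into $\mathcal{S}^1$, yields the smooth $h_\mu\colon\mathcal{S}^1\to\mathbb R^6$, and gives assertion~2. Every $h_\mu(\tau)\in\Fix(R)$ already has $\dot z=0$, so the intersection with $\{z=0,\dot z=0\}$ reduces to the scalar condition $z(\tau)=0$; by the shape of the arc and the $S$-symmetry this vanishes exactly at $\tau=0$ and $\tau=\pi$, giving precisely the two crossing points. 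At each of them I would invoke Theorem~\ref{thm:Hiso} for the \isoname bifurcation: the commutativity of $R$ and $S$ together with \textbf{HC2--HC4} for $k=1$ and the hypotheses $\frac{\partial H}{\partial p_0}(x^*)\neq 0$, $h_{fp}'(p_0^*)>0$, $h_b''(p_1^*)>0$ reduce to a finite list of interval inequalities on $\PM_\mu$, on the Jacobi constant $C_\mu$, and on their derivatives, all evaluated on the enclosures from the continuation.

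For each row of Table~\ref{tab:L1Tuplings} I would localize the resonant point on the loop by the scheme of Appendix~\ref{sec:findingBifurcationPoints}, fix an explicit box around it, perform the validated Lyapunov--Schmidt reduction~(\ref{eq:LSreductionH})--(\ref{eq:gkH}) to obtain $G_k^H$ and $g_k^H$, and check \textbf{HC2--HC4} together with the bifurcation-specific condition. For the even resonances $k=2,4$ this means verifying (\ref{eq:nondegeneracyConditionPT}) with $h_{fp}'>0$ and $h_b''>0$ and applying Theorem~\ref{thm:Htupling}; for the odd resonance $k=3$ it means verifying (\ref{eq:nondegeneracyConditionTAG}) with $h_{fp}'\neq0$ and $h_b'\neq0$ and applying Theorem~\ref{thm:Htag}. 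As in the Falkner--Skan example, for $k=4$ one must additionally exclude a lower principal period by a separate interval bound that keeps the fourth-order branch away from a period-doubling branch.

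I expect the dominant obstacle to be the continuation itself rather than any single bifurcation check. Closing the loop requires rigorously propagating the Poincar\'e map — and, for the non-degeneracy inequalities, its second and third variational derivatives through the $\mathcal{C}^r$-Lohner algorithm \cite{WZ4,CAPD} — over a long excursion in phase space on which the halo orbits grow large and become nearly aligned with the $(y,z)$-plane, while simultaneously handling the turning points at which the naive $z$-parametrization degenerates and the enclosures must be re-coordinatized without destroying the overlap needed for the gluing step.
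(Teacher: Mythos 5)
Your proposal follows essentially the same route as the paper: validate the two \isoname bifurcations as in Theorem~\ref{thm:main_isochronous}, continue the upper arc with the parametrized Interval Newton operator while switching the continuation parameter at turning points (the paper concretely uses $z$ up to the threshold $z=0.625$ and then $x$ for the top arc), obtain the lower arc by the symmetry $S$, glue the pieces into a closed smooth curve, and certify each resonance in Table~\ref{tab:L1Tuplings} by checking \textbf{HC2--HC4} plus the hypotheses of Theorem~\ref{thm:Htag} or Theorem~\ref{thm:Htupling} on tiny boxes around high-precision approximate bifurcation points. The only substantive detail you omit is the paper's caveat that for $j=7$, $\mu=\muSJ$ the convexity condition of Theorem~\ref{thm:Htupling} could not be verified, so there only the intersection of the period-$1$ and period-$4$ curves (via \textbf{HC2--HC4}) is established.
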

\begin{proof}
The branch of halo orbits is split into four pieces.
 \begin{enumerate}
  \item Proceeding as in the proof of Theorem~\ref{thm:main_isochronous} we validate the existence of two \isoname bifurcations: one in the region $x>0$ and the second in $x<0$ --- see Figure~\ref{fig:circles}. From \textbf{HC3} we also have, that there is a branch of out-of-plane $R$-symmetric periodic orbits parametrized by $z\in [-\Delta_z,\Delta_z]$, for an explicit $\Delta_z>0$. 
  \item Both out-of-plane families are then continued (see \ref{sec:continuation}) and parametrized by $z$ variable until hand-chosen threshold value $z=0.625$, as shown in Figure~\ref{fig:circles}. 
  \item The upper arc joining two halo orbits with $z=0.625$ is parametrized by $x$ variable.
 \end{enumerate}
By the well known techniques \cite{BLM} we can check, that these pieces glue into a smooth curve. By the symmetry we obtain the lower branch of halo orbits. Summarizing, we obtained, that the branch of halo orbits is a compact, smooth, one-dimensional manifold without boundary. It is well known \cite{Mi}, that such a manifold is diffeomorphic to a circle. By the construction the manifold is $S$-symmetric, thus the parametrization $h$ can be chosen to preserve this symmetry, as well. 

Approximate bifurcation points listed in Table\ref{tab:L1Tuplings} (except for $k=1$ were found with very high accuracy (of order $10^{-60}$) using high-order ODE solvers from the CAPD library \cite{CAPD} based on high precision floating-point arithmetic \cite{mpfr}. Then we checked assumptions of Theorem~\ref{thm:Htag} and Theorem~\ref{thm:Htupling} on very small sets (of the size about $10^{-30}$) centred at these approximate bifurcation points. 
Notice, that in one case $j=7$ and $\mu=\muSJ$ we could not obtain bounds on third order derivatives sharp enough to check convexity of Jacobi integral along bifurcation curve. We checked, however, the conditions \textbf{HC2--HC4}, which in particular means, that there are two curves of halo orbits of period $1$ and $4$ intersecting at a single point. 
\end{proof}
\begin{remark}
 Although we did not prove it, there is a strong numerical evidence that the points $h_\mu(0)$ belongs to the family of $L_1$-Lyapunov orbits. A possible method to close this gap is to apply the method for computation of invariant manifolds of Lyapunov orbits, as proposed in \cite{CR}.
\end{remark}

Projections of the two curves $h_\mu(\mathcal S^1)$ for $\mu\in\{\muSJ,\muEM\}$ resulting from Theorem~\ref{thm:L1_circle} onto $(x,z)$ plane are shown in Figure~\ref{fig:circles}. These families form a $2D$--tori in the full phase space. Such torus for the mass $\mu=\muSJ$ is shown in Figure~\ref{fig:L1JupiterTori}.
\begin{figure}[htbp]s
\centering
\includegraphics[width=\textwidth]{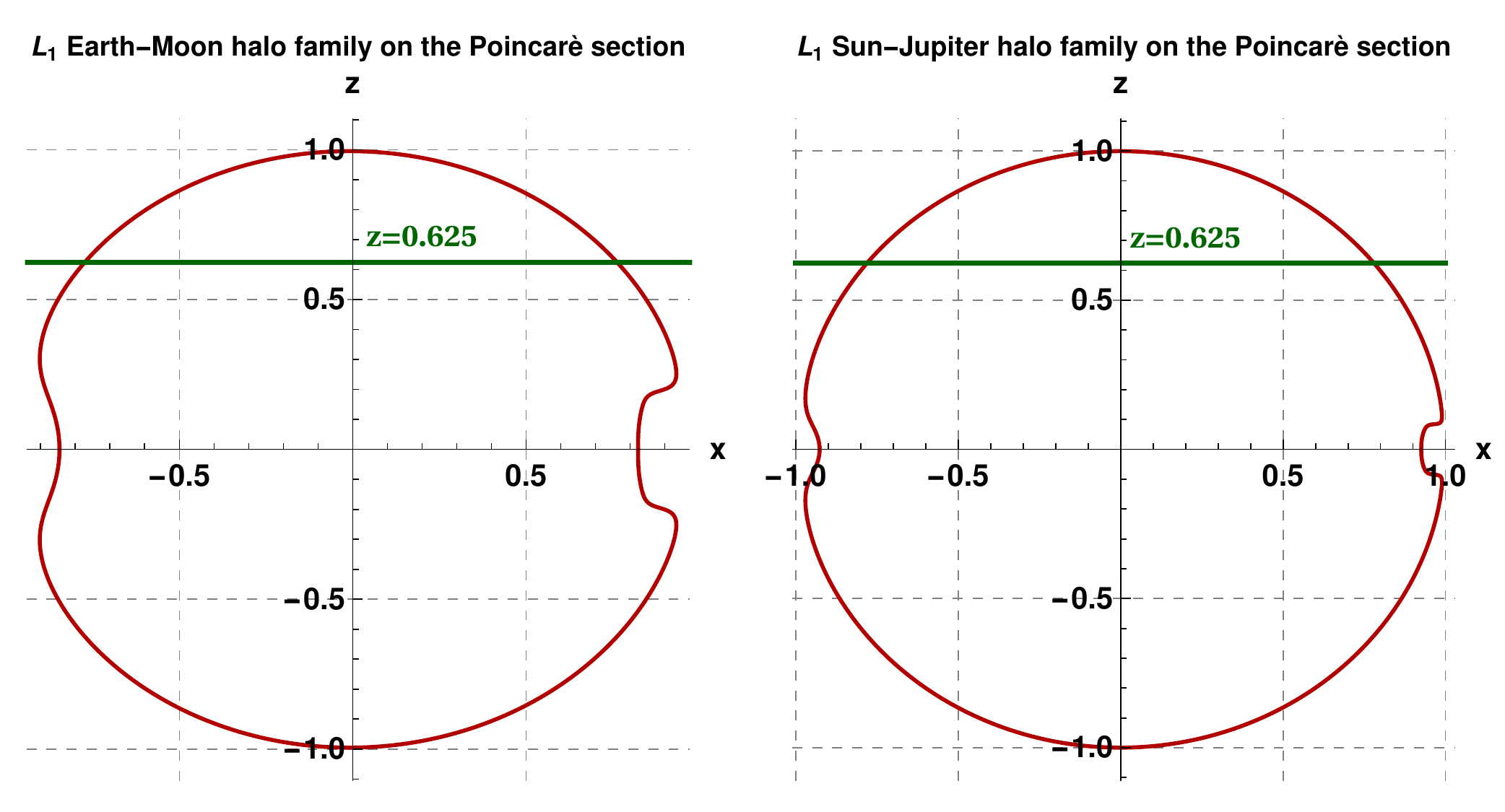}
\caption{Projection onto $(x,z)$-plane of $h_\mu(\mathcal S^1)$, $\mu\in\{\muSJ,\muEM\}$ as defined in Theorem~\ref{thm:L1_circle}. Each point $h_\mu(\tau)$ is an initial condition for a halo orbit, thus the entire family forms a $2D$-tori in the full phase space -- see Figure~\ref{fig:L1JupiterTori}.}
\label{fig:circles}
\end{figure}

\begin{figure}[htbp]
\centering
\includegraphics[width=.8\textwidth]{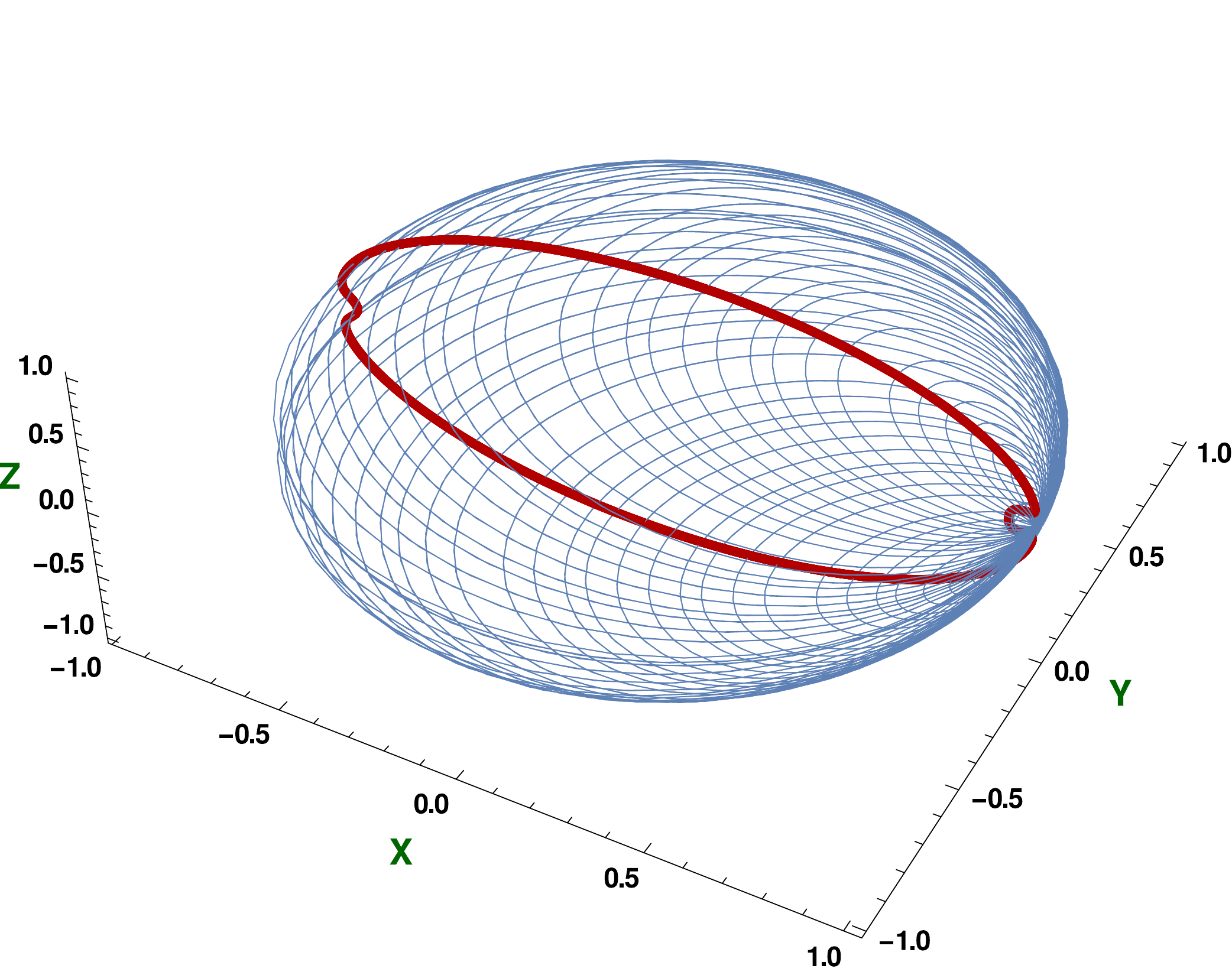}
\caption{$2D$ torus of primary $L_1$-halo orbits in the Sun-Jupiter system.}
\label{fig:L1JupiterTori}
\end{figure}

\begin{figure}[htbp]
\centering
\includegraphics[width=\textwidth]{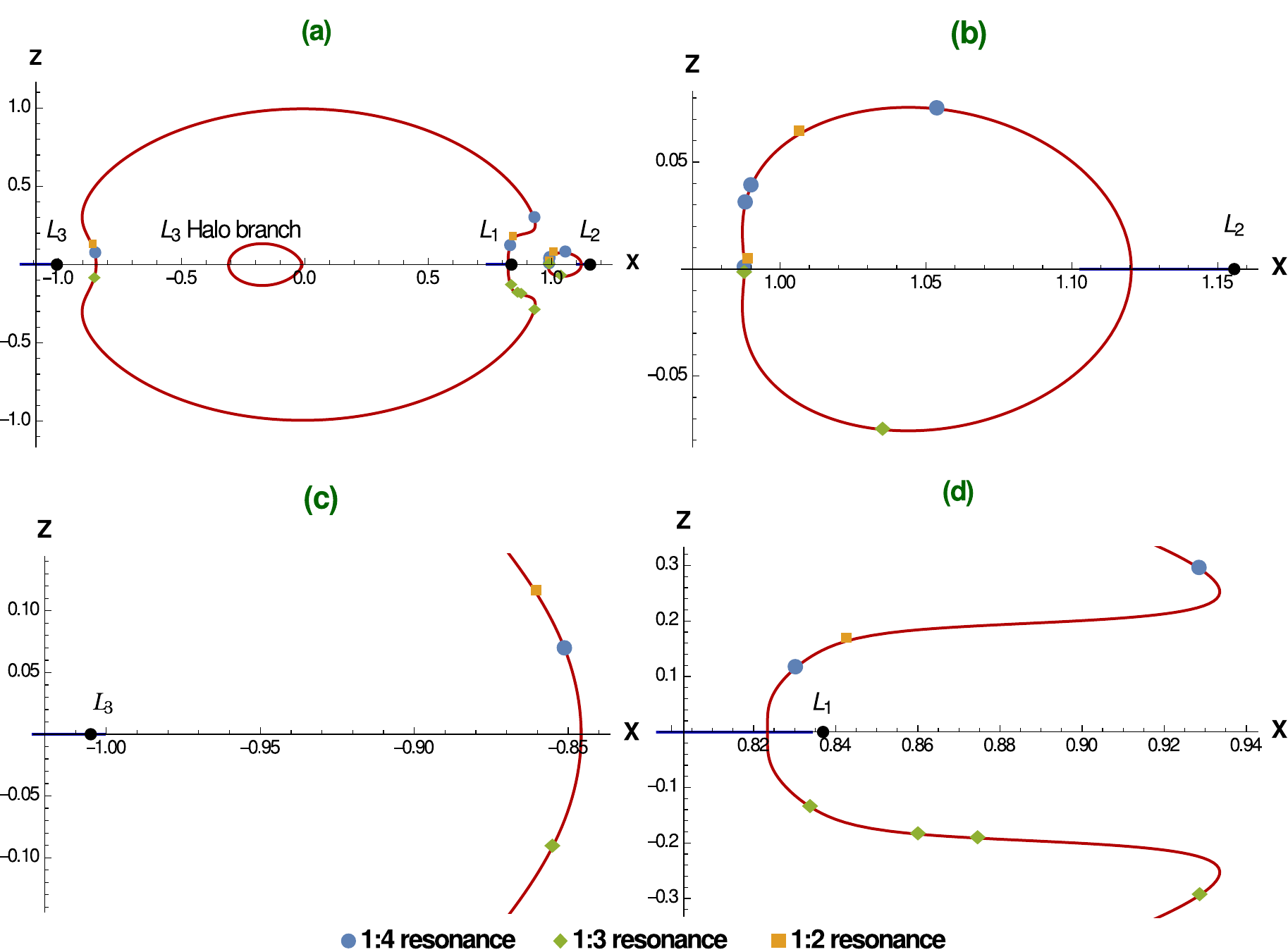}
\caption{Intersection of families of $L_{1,2,3}$-halo orbits in the Earth-Moon system with the Poincar\'e section $\Pi$ along with strong resonances, that lead to period-doubling and period-quadrupling bifurcations (marked by squares and disks in $z>0$ half-plane, respectively) and \tagname bifurcations (marked by diamonds in $z<0$ half-plane), as proved in  Theorem~\ref{thm:L1_circle} and Theorem~\ref{thm:L2_circle}. 
(a) Mutual location of $L_{1,2,3}$-halo branches: $L_{1,2}$-branches are shown on $\{y=0,\dot y>0\}$ section, $L_3$ is shown on $\{y=0,\dot y<0\}$ section; (b) location of strong resonances on $L_2$-halo branch; (c) and (d) location of strong resonances on $L_1$-halo branch.
}
\label{fig:L23Circles}
\end{figure}

Numerical simulation \cite{H,DRPKDGV} shows that $L_{2,3}$-halo families continue to exists until a collision with one of the primaries --- see Figure~\ref{fig:L23Circles}. Hence, on the Poincar\'e section $\Pi$, $(x,z)$ coordinates of these orbits approach $(1-\mu,0)$ and $(\mu,0)$ respectively, while $|\dot y|$ tends to infinity.

We have the following partial result for the $L_2$-halo family.
\begin{theorem}\label{thm:L2_circle}
  Consider the CR3BP with $\mu\in\{\muEM,\muSJ\}$ as defined in (\ref{eq:EM_SJ_mass}). There is a smooth function $h_\mu:[-1,1]\to \mathbb R^6$ such that for $\tau\in[-1,1]$ the following statements hold true.
  \begin{enumerate}
  \item $h_\mu(\tau)=(x_\mu(\tau),0,z_\mu(\tau),0,\dot y_\mu(\tau),0)$  is an initial condition for an $R$--symmetric periodic (halo) orbit. 
  \item $h_\mu(\tau) = S(h(-\tau))$ -- the family is $S$ symmetric.
  \item $h_\mu(0)$ is a point of an \isoname bifurcation.
  \item The function $\dot y_\mu(\tau)$ has a unique local minimum at $\tau=0$ satisfying
  \begin{equation}\label{eq:BoundOnDYAtMinimum}
  \dot y_{\muEM}(0)\in 0.176040[3,5],\qquad
  \dot y_{\muSJ}(0)\in 0.069870[3,8].
  \end{equation}  
  \item The branches continue to at least 
  \begin{equation}\label{eq:dytreshhold}
  \dot y_{\muEM}(\pm1)= 20.5,\qquad  \dot y_{\muSJ}(\pm1)=8.
  \end{equation}
  \end{enumerate}
  Moreover, these branches undergo period doubling, period quadrupling and third order \tagname bifurcations as listed in Table~\ref{tab:L2Tuplings}.
\end{theorem}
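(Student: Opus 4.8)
The plan is to reproduce the architecture of the proof of Theorem~\ref{thm:L1_circle}, with the essential difference that the $L_2$-halo family is an \emph{arc} rather than a closed loop: it emanates from an \isoname bifurcation and runs toward a near-collision regime, so the compact-one-manifold argument used for $L_1$ is unavailable. First I would locate a high-accuracy approximate \isoname bifurcation point using the scheme of~\ref{sec:findingBifurcationPoints} (see Remark~\ref{rem:AppIsoPoint}) and verify the hypotheses of Theorem~\ref{thm:Hiso} on a small explicit box centred at it. Since $R$ and $S$ commute for the CR3BP (see (\ref{eq:CR3BPsymmetries})), this validates assertion~(3) and, via assumption \textbf{HC3}, delivers a local branch of out-of-plane $R$-symmetric orbits parametrized by the amplitude $z$ on an explicit interval around $z=0$; the rigorous enclosure of $\dot y$ at this point yields the bounds (\ref{eq:BoundOnDYAtMinimum}).

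The $S$-symmetry $h_\mu(\tau)=S(h(-\tau))$ of assertion~(2) is imposed by construction, so it suffices to build the half-branch $\tau\ge 0$ and recover $\tau<0$ by reflection. On this half I would continue the family with the parametrized Interval Newton method of~\ref{sec:continuation}, switching the continuation variable as the curve bends --- $z$ is a convenient parameter near the bifurcation, where $\dot y'$ degenerates, while further out $\dot y$ becomes monotone and serves as the natural coordinate up to the threshold values in (\ref{eq:dytreshhold}). The range is covered by an adaptive union of overlapping subintervals; on each piece the hypotheses of Lemma~\ref{lem:ContinuationLemma} are checked, and the overlap criterion of \cite{BLM,GLM} glues the pieces into a single smooth curve, giving assertions~(1) and~(5).

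For assertion~(4) I would exploit that, by the $S$-symmetry just noted, $\dot y_\mu$ is an \emph{even} function of $\tau$, so that $\tau=0$ is automatically a critical point. That it is a proper local minimum follows from a rigorous positivity (convexity) check on the leading even coefficient --- the analogue of the condition $h_b''(p_1^*)>0$ appearing in Theorem~\ref{thm:Hiso} --- while uniqueness of the minimum on the whole arc is obtained by verifying $\dot y_\mu'(\tau)\neq 0$ for $\tau\neq 0$ on every continuation subinterval. The remaining period-doubling, period-quadrupling and third-order \tagname bifurcations of Table~\ref{tab:L2Tuplings} are then validated exactly as in Theorem~\ref{thm:L1_circle}: each resonance is located to very high accuracy, after which the hypotheses of Theorem~\ref{thm:Htupling} (even $k$) or Theorem~\ref{thm:Htag} (odd $k$) are checked on a small box enclosing the candidate.

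The principal difficulty I anticipate is the long-range continuation itself. Because the branch is open and approaches a primary, $|\dot y|$ grows large and the orbits pass close to a singularity, so rigorous evaluation of the Poincar\'e map together with its second- and third-order derivatives becomes progressively more expensive and the interval enclosures harder to keep sharp. Maintaining enclosures tight enough to certify the nondegeneracy inequalities --- in particular the third-order condition required for the period-quadrupling bifurcations, which already proved to be the binding constraint in the $L_1$ case --- across such a wide span of $\dot y$, while correctly handling the switches of continuation variable, is the delicate and computationally demanding heart of the argument.
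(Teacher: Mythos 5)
Your proposal follows essentially the same route as the paper: validate the \isoname bifurcation via Theorem~\ref{thm:Hiso} near $z=0$ (yielding assertions (3) and (\ref{eq:BoundOnDYAtMinimum})), continue the half-branch first in the $z$ variable and then in $\dot y$ up to the thresholds (\ref{eq:dytreshhold}), glue the pieces, obtain the other half by the $S$-symmetry, establish the unique minimum of $\dot y_\mu$ from $\dot y_\mu''>0$ near $\tau=0$ together with monotonicity of $\dot y$ along the remaining segments, and certify each resonance in Table~\ref{tab:L2Tuplings} via Theorem~\ref{thm:Htupling} or Theorem~\ref{thm:Htag}. Your anticipated difficulty with third-order derivative bounds is exactly what occurs: for $j=8$ (both masses) the paper could only verify \textbf{HC2--HC4}, not the convexity condition, so those entries are only proved to be intersection points of period-$1$ and period-$4$ curves.
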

\begin{proof}
 The validation is split into three steps.
 \begin{enumerate}
  \item From Theorem~\ref{thm:main_isochronous} we know, that there is an \isoname bifurcation of halo orbits from $L_2$-Lyapunov family. The estimates (\ref{eq:BoundOnDYAtMinimum}) are taken from this proof. The branching of family of halo orbits $h_\mu(z) = (x_\mu(z),0,z,0,\dot y_\mu(z),0)$ is parametrized by $z\in[-1,1]\cdot \Delta_z$, with $\Delta_z=10^{-7}$. We also check that $\dot y_\mu''(z)>0$ for $|z|\leq \Delta_z$.
  \item The branch is then rigorously continued using \ref{sec:continuation} and parametrized by $h_\mu(z) = (x_\mu(z),0,z,0,\dot y_\mu(z),0)$, $z>0$ until some hand-chosen threshold value of $\hat z$ (dependent on $\mu$). We also checked that for $z\in[\Delta z,\hat z]$ there holds $\dot y_\mu'(z)>0$.
  \item Further continuation of the branch starting from $h_\mu(\hat z)$ is parametrized by $\dot y$ until hand-chosen threshold values (\ref{eq:dytreshhold}). 
 \end{enumerate}
Summarizing, in each segment the variable $\dot y$ is increasing along the branch of periodic orbits which makes it possible to re-parametrize the curve as a function 
$$h_\mu(\tau)=(x_\mu(\tau),0,z_\mu(\tau),0,\dot y_\mu(\tau),0)$$
defined on $\tau\in[0,1]$. From the symmetry $S$ we obtain the second branch for $\tau\in[-1,0]$.

The bifurcations listed in Table~\ref{tab:L2Tuplings} (except $j=0$) were validated using Theorem~\ref{thm:Htupling} and Theorem~\ref{thm:Htag} in high-precision interval arithmetics. Notice, that in two cases $j=8$ and $\mu=\{\muSJ,\muEM\}$ we could not obtain bounds on third order derivatives sharp enough to check convexity of Jacobi integral along bifurcation curve. We checked, however, the conditions \textbf{HC2--HC4}, which in particular means, that there are two curves of halo orbits of period $1$ and $4$ intersecting at a single point.  
\end{proof}

\begin{remark}
The threshold values in (\ref{eq:dytreshhold}) have been chosen so that the validated arc of $L_{2}$--halo orbits contains all strong resonances at which period-tupling and \tagname bifurcations occurs for both values of $\mu\in\{\muSJ,\muEM\}$. Numerical simulation shows, that for larger values of $\dot y$ strong resonances are not present. However, we did not validate this conjecture. A possible approach to close this gap and obtain a full picture of what happens to $L_{2,3}$--halo families is to perform Levi-Civita regularisation \cite{S} and continue branches of halo orbits in these coordinates.
\end{remark}

\begin{table}
\caption{Bound on Jacobi constant for period $k$-tupling and \tagname bifurcations of $L_1$ halo families for the mass parameters $\muEM$ and $\muSJ$. Multiplicity $k=1$ corresponds to \isoname bifurcations. Multiplicity $k=3$ stands for \tagname bifurcations. In the case $j=7$ and $\mu=\muSJ$, denoted by a star, the convexity condition in Theorem \ref{thm:Htupling} has not been checked. We could not obtain bounds on third order derivatives so sharp, which would guarantee that the second derivative of Jacobi integral along bifurcation curve is non-zero.
}
\label{tab:L1Tuplings}
\centering
\begin{tabular}{llll}
\hline\noalign{\smallskip}
j & k & bound on Jacobi constant ($\muEM$) & bound on Jacobi constant ($\muSJ$)\\
\noalign{\smallskip}\hline\noalign{\smallskip}
0 & 1 & 3.17435[03, 36] & 3.03588[11,51] \\
1 & 4 & 3.08384097317512242430038839[79,91] & 3.01979992774088569676042962[45,59]\\
2 & 3 & 3.058886412529835176423178819[4,6] & 3.015412945713342018918305256[4,5]\\
3 & 2 & 3.0216192479264201986830801047[6,8] & 3.00909434962110748263791018519[3,9]\\
4 & 3 & 2.999986911642456326104353768[3,8] & 3.00589961847845578773000478[64,75]\\
5 & 3 & 2.997919501600216512922520[69,71] & 3.00584514988954760426886596[07,61]\\
6 & 3 & 2.94132864491556775199[28,32] & 2.9941342902214648929134[15,26]\\
7 & 4 & 2.940683922766931384[68,79] & 2.9940756819941148370203478568[3,4]\textbf{*}\\
8 & 2 & -0.986509091038502895183600231[6,9] & -0.99874596801401641137454972[14,21]\\
9 & 3 & -0.996795335128162658942078721[1,5] & -1.0006666037342203004067305[44,52]\\
10& 4 & -1.004727349648878143369879[07,22] & -1.00227845861825336488127[00,44]\\
11& 1 & -1.016[09,14] & -1.00460[55,77] \\
\noalign{\smallskip}\hline
\end{tabular}
\end{table}

\begin{table}
\caption{Bound on Jacobi constant for period $k$-tupling and \tagname bifurcations of $L_2$ halo families for the mass parameters $\muEM$ and $\muSJ$. Multiplicity $k=1$ corresponds to \isoname bifurcation. Multiplicity $k=3$ stands for \tagname bifurcations. In two cases $j=8$ and $\mu=\{\muSJ,\muEM\}$, denoted by a star, the convexity condition in Theorem \ref{thm:Htupling} has not been checked. We could not obtain bounds on third order derivatives sharp enough, which would guarantee that the second derivative of Jacobi integral along bifurcation curve is non-zero.}
\label{tab:L2Tuplings}
\centering
\begin{tabular}{llll}
\hline\noalign{\smallskip}
j & k & bound on Jacobi constant ($\muEM$) & bound on Jacobi constant ($\muSJ$)\\
\noalign{\smallskip}\hline\noalign{\smallskip}
0 & 1 & 3.15211[87,91] & 3.03413[67,72] \\
1 & 4 & 3.071869946146936057096946[46,52] & 3.01887850935398942392012584[04,38]\\
2 & 3 & 3.05083503865220863946099978[87,93] & 3.014802161770970024871372364[53,86]\\
3 & 2 & 3.0229911591596336379776897124[1,5] & 3.0091557491590008844773187520[3,6]\\
4 & 4 & 3.0158978159595140970471[78,93] & n/a \\
5 & 2 & 3.017143662542155479781194411[85,94] & n/a\\
6 & 4 & 3.0190035761795315910790[19,29] & n/a \\
7 & 3 & 3.077836508502735302[69,73] & 3.019542146861187965925873562[20,98]\\
8 & 4 & 3.104289978034786552471088092[8,9]\textbf{*} & 3.024350541079342605537853984[5,6]\textbf{*}\\
\noalign{\smallskip}\hline
\end{tabular}
\end{table}

\subsection{Implementation notes}
The source code of all programs is available to download from the web page of the corresponding author \cite{Wweb}. 
The programs are written in C++-11 and use rigorous ODE solvers and algorithms for computation of Poincar\'e maps and their derivatives \cite{WW,WZ4} from the CAPD library \cite{CAPD}. 
All programs the from the were compiled using g++-4.9.2 and executed on a computer equipped with Intel Xeon E7-8867 v4 \@ 2.40GHz processors (64 cores).

\appendix
\section{Interval Newton method for implicit equations}\label{sec:continuation}
In this section we provide a method for validated computation of implicit functions. Such an algorithm is needed to check assumptions \textbf{C2--C3} or \textbf{HC2--HC3}, as well as to compute wide branches of periodic orbits far from bifurcation points. The method is an adaptation of the well known \emph{Interval Newton Method} (INO) \cite{M,N} to the case of implicit equations. The main modification which significantly improves the method is the use of higher order derivatives. The method itself is quite straightforward but to the best of our knowledge, it has not appeared in the literature.
First we recall the Interval Newton Method \cite[Thm.~8.4]{M}, \cite[Thm.~5.1.7]{N}.
\begin{theorem}[\cite{M,N}]\label{thm:IntervalNewtonMethod}
Let $f:\mathbb R^n\to\mathbb R^n$ be a $\mathcal C^1$ map and let $X\subset \mathbb R^n$ be a convex, compact set. For $x_0\in \mathrm{int}X$ we define the interval Newton operator by
\begin{equation*}
N(f,x_0,X) = x_0 - [Df(X)]_I^{-1}f(x_0),
\end{equation*}
where by $[Df(X)]_I$ we mean a convex hull of the set of matrices $\{Df(x): x\in X\}$.
\begin{enumerate}
\item If $N(f,x_0,X)\subset \mathrm{int}\,X$ then $f$ has a unique zero in $X$ that belongs to $N(f,x_0,X)$.
\item If $N(f,x_0,X)\cap X=\emptyset$ then $f$ has no zeros in $X$.
\end{enumerate}
\end{theorem}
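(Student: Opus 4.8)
The plan is to show that the interval Newton operator simultaneously encloses all zeros of $f$ in $X$ and, under the stronger hypothesis of part (1), certifies that such a zero exists and is unique. Throughout I would make explicit the implicit standing hypothesis that the operator is well defined, namely that every matrix in the convex hull $[Df(X)]_I$ is invertible, so that $[Df(X)]_I^{-1}$ is a bounded set of matrices; this regularity is used at every stage.

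The common engine is an integral mean value identity. Since $X$ is convex, for any $a,b\in X$ the whole segment $a+t(b-a)$, $t\in[0,1]$, lies in $X$, so the matrix $M(a,b)=\int_0^1 Df(a+t(b-a))\,dt$ is an average of derivatives evaluated inside $X$ and hence belongs to $[Df(X)]_I$; moreover $f(b)-f(a)=M(a,b)(b-a)$. First I would apply this with $a=x_0$ to obtain the enclosure property: if $x^*\in X$ is any zero of $f$, then $-f(x_0)=M(x_0,x^*)(x^*-x_0)$, and inverting the invertible matrix $M(x_0,x^*)\in[Df(X)]_I$ gives $x^*=x_0-M(x_0,x^*)^{-1}f(x_0)\in N(f,x_0,X)$. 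Thus every zero in $X$ lies in $N(f,x_0,X)$. Part (2) is then immediate: if $N(f,x_0,X)\cap X=\emptyset$ there can be no zero. Applying the same identity to two putative zeros $x^*,x^{**}$, so that $0=M(x^*,x^{**})(x^{**}-x^*)$ with $M(x^*,x^{**})$ invertible, forces $x^{**}=x^*$ and yields the uniqueness claim of part (1).

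The remaining and genuinely harder point is existence in part (1), where I would invoke the Brouwer degree. Let $\check A$ be any fixed matrix in $[Df(X)]_I$ (for instance its midpoint) and let $L(x)=f(x_0)+\check A(x-x_0)$ be the associated affine linearization, whose unique zero $x_0-\check A^{-1}f(x_0)$ lies in $N(f,x_0,X)\subset\mathrm{int}\,X$ because $\check A^{-1}\in[Df(X)]_I^{-1}$. Consider the homotopy $H(x,s)=(1-s)L(x)+sf(x)$ joining $L$ at $s=0$ to $f$ at $s=1$. Writing $f(x)=f(x_0)+M(x_0,x)(x-x_0)$ gives $H(x,s)=f(x_0)+B_s(x-x_0)$ with $B_s=(1-s)\check A+sM(x_0,x)$ a convex combination of elements of $[Df(X)]_I$, hence itself in $[Df(X)]_I$ and invertible. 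Consequently any solution of $H(x,s)=0$ satisfies $x=x_0-B_s^{-1}f(x_0)\in N(f,x_0,X)\subset\mathrm{int}\,X$, so $H(\cdot,s)$ has no zero on $\partial X$ for any $s\in[0,1]$. Homotopy invariance of the degree then gives $\deg(f,X,0)=\deg(L,X,0)=\mathrm{sign}(\det\check A)=\pm1\neq0$, so $f$ must vanish somewhere in $X$; by the enclosure property this zero lies in $N(f,x_0,X)$, completing part (1).

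I expect the main obstacle to be exactly this existence half of part (1). The enclosure and uniqueness statements are purely algebraic consequences of the mean value identity together with invertibility, but existence requires the topological input of degree theory, and the delicate step is the verification that the Newton condition $N(f,x_0,X)\subset\mathrm{int}\,X$ forces the homotopy $H$ to avoid $\partial X$. One must also take care to state up front the regularity hypothesis that renders $[Df(X)]_I^{-1}$ meaningful, since without it neither the enclosure nor the boundary argument goes through.
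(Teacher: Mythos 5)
The paper does not prove this statement at all: Theorem~\ref{thm:IntervalNewtonMethod} is quoted verbatim from the literature (Moore \cite{M} and Neumaier \cite{N}) and used as a black box, so there is no in-paper proof to compare against. Your argument is correct on its own terms. The mean value matrix $M(a,b)=\int_0^1 Df(a+t(b-a))\,dt$ does lie in $[Df(X)]_I$ (the convex hull of the compact set $Df(X)$ is compact, hence closed, so it contains this limit of convex combinations), the enclosure and uniqueness claims follow exactly as you say, and the degree-theoretic homotopy is sound: any zero of $H(\cdot,s)$ in $X$ is forced into $N(f,x_0,X)\subset\mathrm{int}\,X$, so the boundary is avoided and $\deg(f,\mathrm{int}\,X,0)=\mathrm{sign}\det\check A\neq0$. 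You are also right to flag the implicit regularity hypothesis that every matrix in $[Df(X)]_I$ is invertible; that is indeed part of the standard statement and is what the paper relies on (cf.\ the invertibility conclusion drawn in Lemma~\ref{lem:ContinuationLemma}). The one remark worth making is that the existence half admits a more elementary route than degree theory, and this is essentially how the cited sources argue: the map $\Phi(x)=x_0-M(x_0,x)^{-1}f(x_0)$ is continuous on $X$ (continuity of $x\mapsto M(x_0,x)$ plus continuity of inversion on nonsingular matrices) and maps $X$ into $N(f,x_0,X)\subset\mathrm{int}\,X\subset X$, so Brouwer's fixed point theorem yields $x^*=\Phi(x^*)$, and the mean value identity then gives $f(x^*)=f(x_0)+M(x_0,x^*)(x^*-x_0)=0$. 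Your degree argument buys nothing extra here but costs the heavier topological machinery; otherwise the two proofs are of equal strength.
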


The above theorem can be used to validate the existence of solutions to implicit equations over an explicit domain.
\begin{lemma}\label{lem:ContinuationLemma}
Let $f:\mathbb R^m\times\mathbb R^n\to\mathbb R^n$ be a $\mathcal C^1$ map, $Z\subset \mathbb R^m$ be the closure of an open set and let $X\subset \mathbb R^n$ be a convex, compact set with non-empty interior. For $x_0\in \mathrm{int}\,X$ we define the interval Newton operator by
\begin{equation}\label{eq:ParameterizedNewton}
N(f,x_0,X,Z) = x_0 - [D_xf(Z,X)]_I^{-1}f(Z,x_0).
\end{equation}

If $N(f,x_0,X,Z)\subset \mathrm{int}\,X$ then there exists a $\mathcal C^1$ smooth function $g:Z\to X$ such that the set of zeroes $\{(z,x)\in Z\times X : f(z,x)=0 \}$ coincides with the graph of the function $g$, i.e.
$$\left\{(z,x)\in Z\times X : f(z,x)=0 \right\}\, =\, \left\{(z,g(z)) : z\in Z\right\}.$$
\end{lemma}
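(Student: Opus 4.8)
The plan is to deduce the parametrized statement fibrewise from the non-parametrized interval Newton operator of Theorem~\ref{thm:IntervalNewtonMethod}, and then to upgrade the resulting solution map to a $\mathcal C^1$ function by the implicit function theorem. The bridge between the two operators is the inclusion-monotonicity of interval arithmetic: evaluating an interval expression over a larger set of data always yields a superset of the result.

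First I would fix an arbitrary $z\in Z$ and consider the map $x\mapsto f(z,x)$, whose non-parametrized Newton operator is
\[
N(f(z,\cdot),x_0,X) = x_0 - [D_xf(z,X)]_I^{-1}f(z,x_0).
\]
Since $(z,x)\in Z\times X$ for every $x\in X$, we have $[D_xf(z,X)]_I\subset[D_xf(Z,X)]_I$ and $f(z,x_0)\in f(Z,x_0)$. By monotonicity of interval matrix inversion and of the interval product and difference, it follows that
\[
N(f(z,\cdot),x_0,X)\subset N(f,x_0,X,Z)\subset\mathrm{int}\,X,
\]
where the last inclusion is the hypothesis. Theorem~\ref{thm:IntervalNewtonMethod} then yields, for each $z\in Z$, a unique zero $g(z)\in X$ of $f(z,\cdot)$, with moreover $g(z)\in N(f(z,\cdot),x_0,X)\subset\mathrm{int}\,X$. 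Fibrewise uniqueness immediately gives that the zero set of $f$ in $Z\times X$ is exactly the graph $\left\{(z,g(z)):z\in Z\right\}$.

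It then remains to establish the regularity of $g$. The very definition of the operator presupposes that $[D_xf(Z,X)]_I$ is invertible, so $D_xf(z,x)$ is invertible for every $(z,x)\in Z\times X$, in particular at $(z,g(z))$. Applying the implicit function theorem at each point $(z,g(z))$ with $z\in\mathrm{int}\,Z$ produces a local $\mathcal C^1$ solution of $f=0$, which must coincide with $g$ by the uniqueness just established; hence $g$ is $\mathcal C^1$ on $\mathrm{int}\,Z$ with $Dg(z)=-[D_xf(z,g(z))]^{-1}D_zf(z,g(z))$. Continuity of $g$ on all of $Z$ follows from fibrewise uniqueness together with compactness of $X$: any limit of $g(z_n)$ along $z_n\to z$ in $Z$ is a zero of $f(z,\cdot)$ and therefore equals $g(z)$. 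Since $Z$ is the closure of an open set, the derivative formula above extends continuously to $Z$, giving $g\in\mathcal C^1(Z)$.

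The step I expect to be the main obstacle is the reduction inclusion $N(f(z,\cdot),x_0,X)\subset N(f,x_0,X,Z)$, which rests on the monotonicity of the interval-matrix inverse, namely $[A]_I\subset[B]_I\Rightarrow[A]_I^{-1}\subset[B]_I^{-1}$; this has to be stated carefully for the particular interval-matrix inversion used in (\ref{eq:ParameterizedNewton}), since in an exact model it is the enclosure of $\{M^{-1}:M\in[A]_I\}$, while in a rigorous implementation it is only guaranteed inclusion-monotone. Once that reduction is secured, the remaining smoothness argument is routine, the only mild care being the passage from $\mathrm{int}\,Z$ to the boundary, which is handled precisely by the hypothesis that $Z$ is the closure of an open set.
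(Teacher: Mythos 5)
Your proposal is correct and follows essentially the same route as the paper's proof: fix $z\in Z$, use inclusion monotonicity to get $N(f_z,x_0,X)\subset N(f,x_0,X,Z)\subset\mathrm{int}\,X$, apply the non-parametrized interval Newton theorem fibrewise, and then invoke invertibility of $D_xf$ together with the implicit function theorem for smoothness. Your additional care about the monotonicity of the interval-matrix inverse and about extending regularity to the boundary of $Z$ only fills in details the paper leaves implicit.
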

\begin{proof}
Let us fix $z\in Z$ and put $f_z = f(z,\cdot)$. Then we have
$$
N(f_z,x_0,X)\subset N(f,x_0,X,Z)\subset \mathrm{int}\, X.
$$
From Theorem~\ref{thm:IntervalNewtonMethod} for all $z\in Z$ there exists a unique $x=g(z)\in X$ such that $f(z,x)=0$. Let us observe, that the condition (\ref{eq:ParameterizedNewton}) implies, that $D_xf(z,x)$ is invertible at each point $(z,x=g(z))$. By the implicit function theorem the function $g$ is smooth, as it solves an implicit equation $f(z,g(z))\equiv 0$ at every point $z\in Z$.
\end{proof}

The efficiency of the INO (Theorem~\ref{thm:IntervalNewtonMethod}) is hidden in the fact, that we usually have a very good approximation (from numerical experiments) for zero, i.e. $f(x_0)\approx 0$. Then the quantity $[Df(X)]_I^{-1}f(x_0)$ can be very tight, even if the computed bound on derivative $Df(X)$ is overestimated. This is not the case for parametrized maps, as equation (\ref{eq:ParameterizedNewton}) contains the term $f(Z,x_0)$. If $Z$ is large then having $f(z,x)\approx 0$ for all $z\in Z$ is rather unlikely.

A straightforward way to overcome this problem is to make a substitution $(z,x)=s(z,w)$, such that in the new coordinates the function $z\to w(z)$, which solves the implicit equation $f(s(z,w))=0$, is flat. Let us fix $(z_0,x_0)\in Z\times X$, such that $f(z_0,x_0)\approx 0$ and assume $D_xf(z_0,x_0)$ is non-singular. We define an affine substitution by
$$
(z,x) = s(z,w) := (z,x_0+w-A(z-z_0)),
$$
where $A = D_xf(z_0,x_0)^{-1}D_zf(z_0,x_0)$. The map $s$ is invertible because its linear part has determinant equal to one and thus zeroes of $f$ are in one-to-one correspondence with zeroes of $g:=f\circ s$. In the new coordinates the point $(z_0,w_0=0)$ is an approximate zero of $g$. Moreover, $D_zg(z_0,w_0)= 0$ (provided $A$ is computed exactly). The above idea of changing linearly coordinate system has been proposed in \cite{BLM}, but the method for validation of a branch of zeroes was different and based on so-called radii polynomial approach \cite{GLM,BLM}.

In the remaining part of the section we will show, how to efficiently evaluate all the terms, which appear in the INO for the mapping $g$. Our test show, that using this approach we could significantly reduce overestimations in evaluation of the INO, which lead to significant advantage of this approach in comparison to direct evaluation of INO for $g$.

The INO for the mapping $g$ on the set $Z\times W$, which contains $(z_0,w_0)$ reads
\begin{equation*}
N(g,w_0,Z,W) = - [D_wg(Z,W)]_I^{-1}g(Z,w_0).
\end{equation*}
In what follows, we will show how we can bound all the terms that appear in this expression. Let $X$ be such that $s(Z,W)\subset (Z,X)$ and denote $\Delta Z = Z-z_0$.
The term $g(Z,w_0)$ can be bounded by means of the mean value theorem
\begin{eqnarray}
g(Z,w_0) &\subset& g(z_0,w_0) + [D_zg(Z,w_0)]_I\cdot\Delta Z \nonumber\\
  &=& f(z_0,x_0) + [D_zg(Z,w_0)]_I\cdot\Delta Z\label{eq:gZBound}
\end{eqnarray}
and the set of matrices $D_zg(Z,w_0)$ can be bounded by 
\begin{equation}\label{eq:gZBound1}
D_zg(Z,w_0) \subset [D_zf(Z,x_0)] - 
   \left[D_xf(Z,x_0)A\right]_I\cap \left[\left(D_xf(Z,x_0)D_xf(z_0,x_0)^{-1}\right)D_zf(z_0,x_0)\right].
\end{equation}
By the choice of $A$, we have $D_zg(z_0,w_0)=0$. Therefore we expect that for not very large parameter radius $\Delta Z$, the term $D_zg(Z,w_0)\cdot\Delta Z$ in (\ref{eq:gZBound}) is a small box around zero, as desired. Note, that the above considerations hold true for any matrix $A$. Therefore the quantities $D_xf(z_0,x_0)^{-1}$ and $D_zf(z_0,x_0)$ can be computed using just floating point arithmetic, however their product $A$ must be bounded rigorously, if we want to take the intersection in (\ref{eq:gZBound1}).

The quantity $g(Z,w_0)$ can be also bounded using second order Taylor expansion
\begin{equation}\label{eq:gZBound2}
g(Z,w_0) \subset f(z_0,x_0) + D_zg(z_0,w_0)\cdot\Delta Z + \frac{1}{2}\Delta Z^TD^2_{zz}g(Z,w_0)\Delta Z.
\end{equation}
Recall, the point $(z_0,x_0)$ is chosen so that $f(z_0,x_0)\approx 0$ and the substitution $s$ is chosen so that $D_zg(z_0,w_0)\approx 0$  (note, usually it cannot be computed exactly). Since both quantities are evaluated at a single point, we can take advantage (if necessary) of high-precision interval arithmetic and make these terms as close to zero as desired. Therefore, the bound on $g(Z,w_0)$ is practically quadratic in the radius of the parameter range $\Delta Z$. One can take the intersection of direct evaluation of $g(Z,w_0)$ in interval arithmetic with the bounds obtained from (\ref{eq:gZBound}) and (\ref{eq:gZBound2}).

In order to compute the Interval Newton Operator for $g$ we have to bound $D_wg(Z,W)$. Direct evaluation gives 
\begin{equation}\label{eq:boundDwg}
D_wg(Z,W)\subset D_xf(Z,X).
\end{equation}
Using second order derivatives of $f$ we can obtain another enclosure
\begin{equation*}
D_wg(Z,W)\subset D_xf(z_0,x_0) + D^2_{ww} g(Z,W)W + D^2_{zw} g(Z,W)\Delta Z,  
\end{equation*}
which can be intersected with (\ref{eq:boundDwg}). 

Numerical experiments we performed show that using the above approach we can usually validate the existence of solution to the implicit equation on much wider domain $Z$ (without subdivision) than in the original coordinates. 

In principle, both $g(Z,w_0)$ and $D_wg(Z,W)$ can be bounded using higher order Taylor expansions. This should come along with nonlinear (usually polynomial) substitution $s$, such that all derivatives of $g=f\circ s$ with respect to $z$ vanish at $(z_0,w_0)$ up to desired order $r$. Then the bound on $g(Z,w_0)$ can be made of order $O(\|\Delta Z\|^{r+1})$.   

In the context of the CR3BP we have found, that the second order expansion is very efficient. Note, that computation of higher order derivatives of Poincar\'e maps in a high dimensional system is costly --- the complexity of the $\mathcal C^r$--Lohner algorithm \cite{WZ4} used to integrate variational equations is $O(n^3r^ns^2)$, where $n$ is the dimension, $s$ is the order of Taylor method and $r\geq 1$ is the largest order of derivative of Poincar\'e map, we request. Clearly, increasing $r$ in a high-dimensional system is very expensive. Secondly, the bounds on higher order derivatives of Poincar\'e map are usually much overestimated than those of lower order. 

\section{Newton like scheme for finding bifurcation points}\label{sec:findingBifurcationPoints}
A straightforward way to localize period-tupling and \tagname bifurcation points of a~family of reversible maps $f_\nu(x)$ is to follow the branch of period-$2$ points $x(\nu)$ and look for resonant eigenvalues of $Df^2_\nu(x(\nu))$. In low-dimensional systems one can look at the stability parameter \cite{GM,H}. This method is very efficient when we want to find a rough approximation to the bifurcation point. In reversible or hamiltonian case multiple eigenvalues may occur making computation of eigenvalues with high-accuracy quite non-trivial task.

In this short section we propose eigenvalue-independent yet efficient scheme for finding very accurate approximation to bifurcation points. In what follows we focus on reversible Hamiltonian systems and use the notation from Section~\ref{sec:hamiltonian}, but the idea applies to any family of reversible maps.

Following Section~\ref{sec:hamiltonian} we assume that $(p_0,p_1,p_2,q=0)\in \Fix(R)$ is an approximate period-$2$ point for the Poincar\'e map $\PM$, which is close to $1:k$ resonance. We would like to refine it by a Newton-like scheme. Since we have $n+1$ unknowns $(p_0,p_1,p_2)$ we need $n+1$ equations with expected isolated zero. We impose
\begin{equation}\label{eq:bifPointsScheme}
\widetilde{\mathcal P}(p_0,p_1,p_2) := \pi_q \mathcal P(p_0,p_1,p_2,0)=0\qquad \text{and}\qquad g_k^H(p_0,p_1)=0,
\end{equation}
where $g_k^H$ is defined by (\ref{eq:gkH}).
The first equation selects the points from the curve of fixed points, while the second equation guarantees that the solution is also on the bifurcation curve. 
In order to apply the Newton method to the system of equations (\ref{eq:bifPointsScheme}), we need to compute derivatives of $g_k^H$. From (\ref{eq:gkH}) we have
$$
\frac{\partial g_k^H}{\partial p_1}(p_0, p_1) =  \int_0^1 \frac{\partial^2 G_k^H}{\partial p_1^2}\left(p_0,p_1(p_0)+t(p_1-p_1(p_0))\right)tdt. 
$$
If the seed point for the Newton method is quite close to the bifurcation point, we may assume that $p_1(p_0)$  almost constant  and thus
$$
\frac{\partial g_k^H}{\partial p_1}(p_0, p_1) \approx  \frac{\partial^2 G_k^H}{\partial p_1^2}\left(p_0,p_1\right)\int_0^1 tdt = \frac{1}{2}\frac{\partial^2 G_k^H}{\partial p_1^2}\left(p_0,p_1\right). 
$$
The second partial derivative reads
\begin{multline*}
\frac{\partial g_k^H}{\partial p_0}(p_0,p_1) = 
\int_0^1\frac{\partial^2 G_k^H}{\partial p_0\partial p_1}\left(p_0,p_1(p_0)+t(p_1-p_1(p_0))\right)dt +\\ 
\int_0^1\frac{\partial^2 G_k^H}{\partial p_1^2}\left(p_0,p_1(p_0)+t(p_1-p_1(p_0))\right)p_1'(p_0)(1-t)dt.
\end{multline*}
Again, assuming $p_1\approx p_1(p_0)$ we can approximate 
$$
\frac{\partial g_k^H}{\partial p_0}(p_0,p_1) \approx  
\frac{\partial^2 G_k^H}{\partial p_0\partial p_1}\left(p_0,p_1\right)+ \frac{1}{2}\frac{\partial^2 G_k^H}{\partial p_1^2}\left(p_0,p_1\right)p_1'(p_0).
$$
In order to compute $D^2G_k^H$ we need second order derivatives of $\mathcal P^k$ and of the function $p_2^H(p_0,p_1)$ obtained from the Lyapunov-Schmidt reduction --- see (\ref{eq:LSreductionH}). The latest can be computed by differentiation of the identity
$$
\pi_{q_2}\left(\mathcal P^k(p_0,p_1,p_2^H(p_0,p_1),0)\right)\equiv 0.
$$
Summarizing, the Newton-like iteration for equation (\ref{eq:bifPointsScheme}) is given by 
$$
(p_0^{m+1},p_1^{m+1},p_2^{m+1}) = (p_0^{m},p_1^{m},p_2^{m}) -r 
$$
where $r$ is the solution to the linear equation $M\cdot r = b$ with
\begin{eqnarray*}
M&=&\begin{bmatrix} 
  \frac{\partial^2 G_k^H(p_0^m,p_1^m)}{\partial p_0\partial p_1}+ \frac{1}{2}\frac{\partial^2 G_k^H(p_0^m,p_1^m)}{\partial p_1^2}p_1'(p_0^m) 
  & \frac{1}{2}\frac{\partial^2 G_k^H(p_0^m,p_1^m)}{\partial p_1^2} 
  & 0\\ 
  \frac{\partial \widetilde{\mathcal P}(p_0^m,p_1^m,p_2^m)}{\partial p_0}
  & \quad \frac{\partial \widetilde{\mathcal P}(p_0^m,p_1^m,p_2^m)}{\partial p_1} \quad
  & \frac{\partial \widetilde{\mathcal P}(p_0^m,p_1^m,p_2^m)}{\partial p_2}
\end{bmatrix},\\
b &=& \begin{bmatrix} \frac{\partial G_k^H(p_0^m,p_1^m)}{\partial p_1}\\ \widetilde{\mathcal P}(p_0^m,p_1^m,p_2^m) \end{bmatrix}. 
\end{eqnarray*}  
Using the above scheme, finding approximate bifurcation points of halo orbits with accuracy $10^{-60}$ was not a difficult task.
\begin{remark}\label{rem:AppIsoPoint}
 In the computer-assisted proof of Theorem~\ref{thm:main_isochronous} we used similar strategy to localize approximate points of \isoname bifurcations. We solved for zeroes of the function
 \begin{equation*}
  (x,\dot y)\to \left(\pi_{\dot x} \mathcal P(x,0,0,0,\dot y,0),\frac{\partial \pi_{\dot z}\mathcal P}{\partial z}(x,0,0,0,\dot y,0)\right).
 \end{equation*}
\end{remark}

\section*{References}

\end{document}